\documentclass[]{amsart}
\usepackage[utf8]{inputenc}  
\usepackage{csquotes}
\usepackage[T1]{fontenc}
\usepackage{graphicx}
\usepackage{tikz}
\usepackage{pgfplots}
\usepackage{pgfplotstable}
\pgfplotsset{compat=1.18}
\pgfplotsset{compat=1.18}
\usetikzlibrary{positioning}
\usepackage{float}
\usepackage{algorithm}
\usepackage{algorithmic}
\usepackage[utf8]{inputenc}
\usepackage[export]{adjustbox}
\usepackage{wrapfig}
\usepackage{amsfonts}
\usepackage{amsthm}
\usepackage{array}
\usepackage[top=2cm, bottom=2cm, left=3cm, right=3cm]{geometry}
\usepackage{amssymb}
\usepackage{xcolor}
\usepackage{mathtools}
\usepackage{comment}
\usepackage{algorithm}
\usepackage{algorithmic}
\usepackage{bbm}
\usepackage[main=english]{babel}
\usepackage{subcaption}
\usepackage{dsfont}
\usepackage{bbm}
\usepackage{indentfirst}
\usepackage{tabto}
\usepackage{color}
\usepackage{systeme}
\usepackage{eso-pic}
\usepackage{float}
\usepackage{fancyhdr}
\usepackage{appendix}
\usepackage{listings}
\usepackage{pgf, tikz}
\usepackage[utf8]{inputenc}
\usepackage{lastpage}

\definecolor{bf}{rgb}{0,0,0.6} 
\definecolor{mygray}{gray}{0.85}
\definecolor{darkWhite}{rgb}{0.94,0.94,0.94}

\graphicspath{{images/}}

\newcommand{\espcond}[2]{\mathbb{E}\mathopen{}\left[#1\middle|#2\right]}
\newcommand{\esp}[1]{\mathbb{E}\mathopen{}\left[#1\right]}

\newcommand{\reels}{\mathbb{R}}

\usepackage{thmtools,thm-restate}
\usepackage{stmaryrd}

\newcommand{\FuncDef}[4]{{\left\{\begin{array}{c} #1\longrightarrow #2\\ #3\mapsto #4\\\end{array}\right.}}

\usepackage{hyperref}

\numberwithin{equation}{section}

\usepackage{amsthm, amssymb}

\newtheorem{thm}{Theorem}[section]
\newtheorem{definition}[thm]{Definition}
\newtheorem{prop}[thm]{Proposition}
\newtheorem{lemma}[thm]{Lemma}
\newtheorem{hyp}[thm]{Hypothesis}
\newtheorem{corol}[thm]{Corollary}
\theoremstyle{definition}
\newtheorem{remarque}[thm]{Remark}

\begin{document}
\title{Extragradient methods for mean field games of controls and mean field type FBSDEs}
\author{Meynard Charles}
\begin{abstract}
    In this paper we present a numerical scheme to solve coupled mean field forward-backward stochastic differential equations driven by monotone vector fields. This is based on an adaptation of so called extragradient methods by characterizing solutions as zeros of monotone variational inequalities in a Hilbert space. We first introduce the procedure in the context of mean field games of controls and highlight its connection to the fictitious play. We detail the construction of a sequence of approximate solutions converging to the optimal control associated to the mean field game. Under sufficiently strong monotonicity assumptions, we demonstrate that this sequence converges exponentially fast. Then we extend the method and main results to general forward backward systems of stochastic differential equations that do not necessarily stem from optimal control. 
\end{abstract}
\maketitle
\section{Introduction}
\subsection{General introduction}
In this article, we present a numerical method to approximate the solution of the coupled mean field forward-backward stochastic differential equation (FBSDE)
\begin{equation}
\label{general fbsde introduction}
\left\{
\begin{array}{l}
     \displaystyle X_t=X_0-\int_0^t  F(X_s,U_s,\mathcal{L}(X_s,U_s))ds+\sqrt{2\sigma}B_t, \\
    \displaystyle  U_t= g(X_T,\mathcal{L}(X_T))+\int_t^T G(X_s,U_s,\mathcal{L}\left(X_s,U_s\right))ds-\int_t^T Z_sdB_s, 
\end{array}
\right.
\end{equation}
for a given initial condition $X_0\in L^2(\Omega,\reels^d)$, monotone coefficients $(F,G,g)$, where $\mathcal{L}(X,U)$ indicates the joint law of the random variables $X,U$ and $(Z_t)_{t\in [0,T]}$ is uniquely defined in such a fashion that the process $(U_t)_{t\in [0,T]}$ is progressively adapted with respect to the filtration generated by the Brownian motion $(B_t)_{t\geq 0}$ 

In recent years, there has been considerable interest in mean field forward backward systems \cite{mean-field-FBSDE,bensoussan}. Usually, such systems arise from the study of stochastic optimal control problems with an interacting population, whether it be mean field control \cite{Bensoussan2013MeanField}, or mean field games \cite{Lions-college,LasryLionsMFG} for which this probabilistic formulation consists in an alternative to the study of the master equation \cite{convergence-problem}. In this case, the associated problem can usually be formulated as follows

\begin{equation}
    \label{intro: mfg stochastic system}
\left\{
\begin{array}{l}
     \displaystyle X_t=X_0-\int_0^t  \nabla_p H(X_s,U_s,\mathcal{L}(X_s))ds+\sqrt{2\sigma}B_t, \\
    \displaystyle  U_t= \nabla_x u(T,X_T,\mathcal{L}(X_T))+\int_t^T \nabla_x H(X_s,U_s,\mathcal{L}\left(X_s\right))ds-\int_t^T Z_sdB_s,
\end{array}
\right.
\end{equation}
where $(x,p,\mu)\mapsto H(x,p,m)$ is the Hamiltonian associated to the control problem of players and $u(T,\cdot)$ the terminal value function for a given player. There are however many advantages to the added genrality of considering coefficients $(F,G,g)$ that do not fall into this category even for mean field games. Indeed the class of extended mean field games \cite{extended-mfg} is contained in \eqref{general fbsde introduction}, in particular this includes mean field games of control \cite{propagation-of-monotonicity}. Conceptually, this dependence of the coefficients on the law of the solution makes the study of these systems difficult beyond short horizons of time \cite{mean-field-FBSDE,lipschitz-sol}. The existence and uniqueness of solutions to coupled mean field FBSDEs over arbitrarily long intervals of time remains a challenging problem, and relies most often on monotonicity assumptions. In the particular case of mean field games, a theory of wellposedness has been developed mostly through arguments based on partial differential equations (PDEs) in the flat monotone regime \cite{convergence-problem}. On the other hand, another notion of monotonicity has been used to study forward backward systems directly \cite{Lions-college,bensoussan,monotone-sol-meynard} through the Hilbertian approach. In a series of paper \cite{disp-monotone-1,disp-monotone-2,weaksol-dipmono}, this approach was extended to mean field games under the notion of displacement monotonicity. The method we present in this article relies extensively on this last notion of monotonicity we refer to as $L^2-$monotonicity when coefficients are not gradients.

Part of this article is also dedicated to the numerical approximation of mean field FBSDEs with common noise and the development of approximation schemes for this problem. In mean field games these models arise from the addition of a noise impacting the dynamics of all players in the same fashion \cite{mfg-with-common-noise,Lions-college}. Models in which such noise is purely additive have led to master equations of second order \cite{convergence-problem} and have been studied extensively in the literature on both MFGs \cite{convergence-problem,Bertucci02122023,globalwellposednessdisplacementmonotone,doi:10.1137/21M1450008} and mean field type FBSDEs \cite{probabilistic-mfg,monotone-sol-meynard}. On the other hand common noise can also come from an additional stochastic process, as in \cite{noise-add-variable,common-noise-in-MFG}. This leads us to consider systems of the form 
\begin{equation}
    \label{eq: fbsde with common noise intro}
\left\{
\begin{array}{l}
     \displaystyle X_t=X_0-\int_0^t  F(X_s,U_s,p_s,\mathcal{L}(X_s,U_s|\mathcal{F}^0_s))ds+\sqrt{2\sigma}B_t, \\
    \displaystyle  U_t= U_T+\int_t^T G(X_s,U_s,p_s,\mathcal{L}\left(X_s,U_s|\mathcal{F}^0_s\right))ds-\int_t^T Z_sd(B_s,W_s),\\
    \displaystyle p_t=p_0-\int_0^t b(p_s)ds+\sqrt{2\sigma^0}W_t,\\
    U_T=g(X_T,p_T,\mathcal{L}(X_T|\mathcal{F}^0_T)),
\end{array}
\right.
\end{equation}
where $(\mathcal{F}^0_t)_{t\geq 0}$ represents the filtration of the common noise and  $(W_s)_{s\geq 0}$ is a $\mathcal{F}^0-$Brownian motion. In particular it was proven in \cite{common-noise-in-MFG} that this class of systems includes the case of MFGs with an additive common noise. 

In the specific case of mean field games, many numerical schemes have been proposed to solve the system \eqref{intro: mfg stochastic system}. In the absence of common noise, several methods based on an equivalent PDE formulation have been proposed \cite{achdou-finite-difference-I,achdou-finite-difference-II} and \cite{ziad-kobeissi} for MFGs of controls. Another approach is to rely on the fictitious play algorithm presented in \cite{fictitious-play-cardaliaguet} but this approach does not have any theoritical guarantees whenever coefficients depends on the law of controls, moreover PDE based numerical methods are only reasonable whenever the dimension of the state space is low. On the other hand, probabilistic schemes have also been developed to solve FBSDEs of the form \eqref{general fbsde introduction}. Ranging from iterative methods based on Picard iterations \cite{numerical-FBSDE-I,numerical-FBSDE-II} to cost minimization with machine learning \cite{ML-FBSDE-I}. However either those methods do not have any tractable convergence rate or they are based approximating the decoupling field associated to the FBSDE which leads to costly algorithms, very susceptible to the propagation of errors since the decoupling field must be infered from a finite sample at each time step. Although the method we present in this article is also purely probabilistic, it relies instead on the monotonicity of coefficients and is inspired by the family of extra-gradient algorithms \cite{extra-gradient,Nesterov}. Thanks to this approach we do not need to compute the decoupling field of the FBSDE and are able to solve directly the problem for a single initial condition with an exponential convergence rate in the case of MFG of controls. Moreover, those result still holds in the presence of common noise. In this case, there are very few other numerical methods: PDE methods are usually depreciated since the problem is now infinite dimensional. More generally, this is to the author's knowledge the first scheme able to handle mean field games of controls with common noise and an explicit convergence rate.
\subsection{Main contributions}
In this article, we present a probabilistic numerical method to solve monotone FBSDEs of the form \eqref{general fbsde introduction}. Even outside of the mean field regime, the proposed method is new. For this method, we give theoretical convergence rates. In particular we show that under sufficiently strong monotonicity assumptions linear convergence is achieved by the algorithm including for mean field games of controls. The scheme we propose is not sensitive to the addition of common noise, in the sense that the convergence rate is unchanged (although the simulation cost for each iteration is larger from a practical viewpoint). In a last section we present a particle method inspired by this approach and give explicit bounds on the convergence rate of this numerical scheme. 

\subsection{Organization of the paper}
In Section \ref{section: FBSDE} we start with some reminders on MFGs of controls. Then we expose the numerical scheme proposed in this particular context. We give explicit convergence rates and in particular show that whenever the game is displacement monotone, strongly in the controls, linear convergence is achieved. Then, we extend our results to the general case of \eqref{general fbsde introduction} by taking inspiration from the former case study. In section \ref{section: common noise} we treat the case of mean field type FBSDEs with an independent common noise. Formally the presence of a common noise does not impact monotonicity, consequently the extension is straightforward. In a last short section, we introduce a particle method based on this approach and give explicit convergence rates. Then we present some numerical results illustrating the convergence bounds obtained in the previous sections. 
\subsection{Notation}
\begin{enumerate}
\item[-] let $k\in \mathbb{N}$, $k>0$, for the canonical product on $\reels^k$ we use the notation
\[x\cdot y=\sum_i x_iy_i,\]
and the following notation for the induced norm
\[|x|=\sqrt{x\cdot x}.\]
    \item[-] Let $\mathcal{P}(\reels^d)$ be the set of (Borel) probability measures on $\reels^d$, for $q\geq 0$, we use the usual notation 
\[\mathcal{P}_q(\reels^d)=\left\{\mu\in \mathcal{P}(\reels^d), \quad \int_{\reels^d} |x|^q \mu(dx)<+\infty\right\},\]
for the set of all probability measures with a finite $q$th moment.
\item[-] For two measures $\mu,\nu\in\mathcal{P}(\reels^d)$ we define $\Gamma(\mu,\nu)$ to be the set of all probability measures $\gamma\in\mathcal{
P}(\reels^{2d})$ satisfying 
\[\gamma(A\times \reels^d)=\mu(A) \quad \gamma(\reels^d\times A)=\nu(A),\]
for all Borel set $A$ on $\reels^d$. 
\item[-]The Wasserstein distance between two measures belonging to $\mathcal{P}_q(\reels^d)$ is defined as
\[\mathcal{W}_q(\mu,\nu)=\left(\underset{\gamma\in \Gamma(\mu,\nu)}{\inf}\int_{\mathbb{\reels}^{2d}} |x-y|^q\gamma(dx,dy)\right)^{\frac{1}{q}}.\]
In what follows $\mathcal{P}_q(\reels^d)$ is always endowed with the associated Wasserstein distance, $(\mathcal{P}_q,\mathcal{W}_q)$ being a complete metric space. 
\item[-] We say that a function $U:\mathcal{P}_q(\reels^d)\to \reels^d$ is Lipschitz if
\[\exists C\geq 0, \quad \forall(\mu,\nu)\in\left(\mathcal{P}_q(\reels^d)\right)^2, \quad |U(\mu)-U(\nu)|\leq C\mathcal{W}_q(\mu,\nu).\]
\item[-] Consider $(\Omega, \mathcal{F},\mathbb{P})$ a probability space,
\begin{itemize}
    \item[-] We define 
\[L^q(\Omega, \reels^d)=\left\{ X: \Omega\to \reels^d, \quad \esp{|X|^q}<+\infty\right\}.\]
\item[-] Whenever a random variable $X:\Omega\to \reels^d$ is distributed along $\mu\in \mathcal{P}(\reels^d)$ we use equivalently the notations $\mathcal{L}(X)=\mu$ or  $X\sim\mu$. 
\end{itemize}
\item[-] $\mathcal{M}_{d\times n}(\reels)$ is the set of all matrices of size $d\times n$ with reals coefficients, with the notation $\mathcal{M}_n(\reels)\equiv \mathcal{M}_{n\times n}(\reels)$.
\item[-] $C_b(\reels^d,\reels^k)$ is the set of all continuous bounded functions from $\reels^d$ to $\reels^k$.
\item[-] We say that $U:\mathcal{P}(\reels^d)\to \reels$ is differentiable at $m$ if there exists a continuous map $\psi:\reels^d\to \reels$ such that
\[\forall \nu\in\mathcal{P}(\reels^d) \quad \underset{\varepsilon\to 0}{\lim} \frac{U(m+\varepsilon(\nu-m))-U(m)}{\varepsilon}=\int_{\reels^d}\psi(x)(\nu-m)(dx).\]
In this case we define the derivative of $U$ at $m$, $\frac{\delta U}{\delta m}(m)$ to be one such $\psi$ such that 
\[\int_{\reels^d} \frac{\delta U}{\delta m}(m)(y)m(dy)=0.\]
\item[-] Derivatives of higher order on the space of probability measures are defined by induction and for $k,l,r\in \mathbb{N}, p>0$ and a function 
\[f:\reels^l\times \mathcal{P}_p(\reels^r)\to \reels,\]
we define the notation 
\[\left\|\frac{\delta^k f}{\delta m^k}\right\|_{Lip}=\sup_{\begin{array}{c}
x,x'\in \reels^l\\
m,m'\in \mathcal{P}_p(\reels^r)\\
y,y'\in \reels^{kr}
\end{array}} \frac{|\frac{\delta^k f }{\delta m^k}(x,m,y)-\frac{\delta^k f }{\delta m^k}(x',m',y')|}{|x-x'|+|y-y'|+\mathcal{W}_p(m,m')}.\]
\end{enumerate}
\subsection{Preliminary results}
In this article we fix an horizon $T>0$ and consider a complete probability space $(\Omega,\mathcal{F},\mathbb{P})$ endowed with a $(d+d^0)-$dimensional Brownian motion $(B_t,W_t)_{0\leq t\leq T}$. In particular we remind that the space $L^2(\Omega,\reels)$ endowed with the inner product 
\[\langle \cdot,\cdot \rangle: (X,Y)\mapsto \esp{XY},\]
is a Hilbert space. We use the notation $\mathcal{H}=(L^2(\Omega,\reels),\langle \cdot,\cdot\rangle)$ and the induced norm is denoted by
\[\forall X\in \mathcal{H}, \|X\|=\sqrt{\langle XX\rangle}.\]
We also use the notation $\mathcal{H}^T$ for the Hilbert space of square integrable $\mathcal{F}-$adapted random processes on $[0,T]$
\[\left\{(X_s)_{s\in [0,T]}, \quad \esp{E\int_0^T |X_s|^2ds}<+\infty \text{ and }(X_s)_{s\in [0,T]} \text { is } \mathcal{F}-\text{adapted}\right\},\] 
endowed with the inner product 
\[\langle X,Y\rangle^T=\esp{\int_0^T X_tY_tdt}.\]

\begin{definition}
    For a function $F:\reels^d\times \mathcal{P}_2(\reels^d)\to \reels^d$, we define its lift on $\mathcal{H}^d$, $\tilde{F}$ to be 
    \[\tilde{F}:\left\{
\begin{array}{l}
     \mathcal{H}^d\to \mathcal{H}^d,  \\
     X\mapsto F(X,\mathcal{L}(X)).
\end{array}
\right.
\]
\end{definition}
 We remind the following adaptation of Lemma 2.3 of \cite{disp-monotone-1}
\begin{prop}
\label{prop: expectation to pointwise}
    Let $f:\FuncDef{\reels^{2d}}{\reels}{(x,y)}{f(x,y)}$ be a continuous function,  such that 
    \[\forall (x,y)\in \reels^{2d}, \quad f(x,y)=f(y,x), \quad f(x,x)=0.\]
Suppose that for some $\mu\in\mathcal{P}_2(\reels^d)$ with full support on $\reels^d$ the following holds 
\[\forall (X,Y)\in \mathcal{H}^{2d}, X\sim \mu,Y\sim \mu, \quad \esp{f(X,Y)}\geq 0,\]
then the inequality is satisfied pointwise 
\[\forall (x,y)\in \reels^d,\quad f(x,y)\geq 0.\]
\end{prop}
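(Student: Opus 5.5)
We argue by contradiction: assume some pair $(x_0,y_0)$ has $f(x_0,y_0)<0$, and build a coupling $(X,Y)$ with both marginals equal to $\mu$ for which $\esp{f(X,Y)}<0$. Since $f(x,x)=0$, necessarily $x_0\neq y_0$. By continuity of $f$ there are $r>0$ and $\eta>0$ such that $f<-\eta$ on $B(x_0,r)\times B(y_0,r)$ and, by symmetry, on $B(y_0,r)\times B(x_0,r)$. We shrink $r$ so that the balls $A=B(x_0,r)$ and $B=B(y_0,r)$ are disjoint. Because $\mu$ has full support, $\mu(A)>0$ and $\mu(B)>0$. Set $\varepsilon=\min(\mu(A),\mu(B))>0$. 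Replacing $A$ and $B$ by Borel subsets of mass exactly $\varepsilon$ might fail if $\mu$ has atoms, so instead we work with the restricted measures $\alpha=\frac{\varepsilon}{\mu(A)}\mu|_A$ and $\beta=\frac{\varepsilon}{\mu(B)}\mu|_B$. Both have total mass $\varepsilon$, and $\alpha+\beta\le\mu$ because $A$ and $B$ are disjoint.

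The coupling swaps mass between the two balls and leaves everything else on the diagonal. Define
\[\gamma=(\mathrm{id},\mathrm{id})_\#\left(\mu-\alpha-\beta\right)+\frac{1}{\varepsilon}\,\alpha\otimes\beta+\frac{1}{\varepsilon}\,\beta\otimes\alpha .\]
The first term is a nonnegative measure because $\alpha+\beta\le\mu$. Its first marginal is $(\mu-\alpha-\beta)+\alpha+\beta=\mu$, and the second marginal is $\mu$ by the same computation. Hence $\gamma\in\Gamma(\mu,\mu)$.

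The atomless assumption on $(\Omega,\mathcal{F},\mathbb{P})$ is needed to realize $\gamma$. It holds here because the space carries a Brownian motion. We therefore pick $(X,Y)\in\mathcal{H}^{2d}$ with $\mathcal{L}(X,Y)=\gamma$, and both $X$ and $Y$ are square integrable since $\mu\in\mathcal{P}_2$. The diagonal part contributes $0$ to $\esp{f(X,Y)}$ because $f(x,x)=0$. The two product parts are supported in $A\times B$ and $B\times A$, where $f<-\eta$. This gives
\[\esp{f(X,Y)}=\frac1\varepsilon\int f\,d(\alpha\otimes\beta)+\frac1\varepsilon\int f\,d(\beta\otimes\alpha)\le -2\eta\varepsilon<0.\]
This contradicts the hypothesis. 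The integrals are well defined because $f$ is continuous, hence bounded on the compact closures of the balls, and it vanishes elsewhere on the support of $\gamma$.

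The main obstacle is building a coupling with exact marginals $\mu$ that puts positive mass only where $f$ is negative and otherwise sits on the diagonal. The sub-measure and product construction above handles atoms without any need to split sets into pieces of equal mass. Symmetry of $f$ is used only to control the $B\times A$ block, and full support is used only to ensure $\varepsilon>0$.
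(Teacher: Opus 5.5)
Your proof is correct and complete. The paper does not prove this proposition itself. It is quoted as an adaptation of Lemma 2.3 of \cite{disp-monotone-1}, and in its only use, Corollary \ref{from lip hilbert to lip x}, the paper handles measures without full support separately, by density and continuity of $F$ in the measure variable.

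Your argument differs from a proof carried out directly on random variables. That approach would take $X\sim\mu$ and modify it on small events $\{X\in A\}$ and $\{X\in B\}$ of equal probability. This needs a splitting of events, either through randomization or an atomless-space argument. If it is done instead with subsets of $\reels^d$, it breaks down when $\mu$ has atoms. You work directly on the coupling $\gamma$:
\begin{itemize}
\item the marginal check that $\gamma\in\Gamma(\mu,\mu)$ takes two lines;
\item the sub-measures $\alpha,\beta$ absorb the atom issue at no cost;
\item the probability space is used only to realize a prescribed law on $\reels^{2d}$, which the Brownian motion guarantees.
\end{itemize}

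Your construction also yields, with no extra work, that for an arbitrary $\mu\in\mathcal{P}_2(\reels^d)$ the hypothesis forces $f\ge 0$ on $\mathrm{supp}\,\mu\times\mathrm{supp}\,\mu$. Full support enters only to guarantee $\mu(A),\mu(B)>0$. Points outside the support still require the density step used in the corollary.
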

In particular this direct corollary
\begin{corol}
\label{from lip hilbert to lip x}
Let $F:\reels^d\times \mathcal{P}_2(\reels^d)\to \reels^d$ be a continuous function. Suppose that there exists a constant $C_F$ such that
\[ \forall (X,Y)\in \mathcal{H}^{2d}, \quad \|\tilde{F}(X)-\tilde{F}(Y)\|\leq C_F\|X-Y\|.\]
Then for the same constant $C_F$
\[\forall (\mu,x,y)\in \mathcal{P}_2(\reels^d)\times \left(\reels^d\right)^2, \quad |F(x,\mu)-F(y,\mu)|\leq C_F |x-y|.\]
\end{corol}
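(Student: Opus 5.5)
We apply Proposition \ref{prop: expectation to pointwise} to a well-chosen symmetric function. First fix $\mu\in\mathcal{P}_2(\reels^d)$ with full support on $\reels^d$, for instance a nondegenerate Gaussian. Define
\[f_\mu(x,y)=C_F^2|x-y|^2-|F(x,\mu)-F(y,\mu)|^2.\]
This function is continuous, since $F$ is continuous in $x$ for fixed $\mu$. It is symmetric in $(x,y)$ and vanishes on the diagonal. Now take $X,Y\in\mathcal{H}^d$ with $X\sim\mu$ and $Y\sim\mu$. Then $\mathcal{L}(X)=\mathcal{L}(Y)=\mu$, so $\tilde F(X)=F(X,\mu)$ and $\tilde F(Y)=F(Y,\mu)$. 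The assumed Hilbert Lipschitz bound, after squaring, reads exactly $\esp{f_\mu(X,Y)}\geq 0$. Proposition \ref{prop: expectation to pointwise} then gives $f_\mu\geq 0$ pointwise, which is the claim for this $\mu$.

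Next we extend to an arbitrary $\mu\in\mathcal{P}_2(\reels^d)$, possibly without full support. For this we approximate by $\mu_\varepsilon=\mu*\mathcal{N}(0,\varepsilon I_d)$. Each $\mu_\varepsilon$ has full support and finite second moment, and $\mathcal{W}_2(\mu_\varepsilon,\mu)\leq \sqrt{d\varepsilon}\to 0$. The first step gives, for all $x,y$,
\[|F(x,\mu_\varepsilon)-F(y,\mu_\varepsilon)|\leq C_F|x-y|.\]
We then let $\varepsilon\to0$. Since $F$ is continuous on $\reels^d\times\mathcal{P}_2(\reels^d)$ with its $\mathcal{W}_2$ topology, $F(x,\mu_\varepsilon)\to F(x,\mu)$ and likewise at $y$. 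The inequality passes to the limit with the same constant $C_F$.

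The only delicate point is this density step. It requires continuity of $F$ in the measure argument for the $\mathcal{W}_2$ topology, which is how the continuity hypothesis is read here. It also requires that Gaussian smoothing converges in $\mathcal{W}_2$. Coupling $X$ with $X+\sqrt{\varepsilon}N$, where $N$ is a standard Gaussian independent of $X$, gives the rate $\sqrt{d\varepsilon}$ directly. The probability space is rich enough to carry such an $N$ because it supports a Brownian motion.
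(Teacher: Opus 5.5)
Your proof is correct and follows the paper's argument: apply Proposition \ref{prop: expectation to pointwise} to $f(x,y)=C_F^2|x-y|^2-|F(x,\mu)-F(y,\mu)|^2$ for a fully supported $\mu$, then extend to a general $\mu$ by density and the $\mathcal{W}_2$-continuity of $F$. The paper only points to \cite{disp-monotone-1} for the density step, and your Gaussian mollification $\mu*\mathcal{N}(0,\varepsilon I_d)$, with $\mathcal{W}_2(\mu_\varepsilon,\mu)\leq\sqrt{d\varepsilon}$, makes that step explicit.
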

\begin{proof}
It suffices to apply the above proposition to 
\[f:(x,y)\mapsto C_F^2|x-y|^2-|F(x,\mu)-F(y,\mu)|^2,\]
for fixed $\mu$ with full support in $\reels^d$. For measure that do not have full support, the result then follows by density and the continuity of $F$ in $\mathcal{P}_2(\reels^d)$ as in \cite{disp-monotone-1}.
\end{proof}

\subsubsection{A primer on monotone variational inequalities in Hilbert spaces}

\quad

Let $(\mathcal{K},\langle \cdot,\cdot \rangle_\mathcal{K})$ indicate a Hilbert space. Consider $v:\mathcal{K}\to \mathcal{K}$ a continuous function and suppose we are interested in the following problem
\begin{equation}
    \label{eq: variational inequality in hilbert}
    \text{find }x^*\in \mathcal{K}, \forall x\in \mathcal{K}, \quad \langle v(x^*),x-x^*\rangle_\mathcal{K}\geq 0.
\end{equation}
Since $\mathcal{K}$ is reflexive, this is clearly equivalent to finding a $x^*$ such that 
\[v(x^*)=0_\mathcal{K}.\]
If we further assume that $v$ is monotone non-degenerate, i.e.
\[\forall x,y\in \mathcal{K}, x\neq y, \langle v(x)-v(y),x-y\rangle_\mathcal{K}>0,\]
then there can exists at most one $x^*$ solving \eqref{eq: variational inequality in hilbert}. In particular if we assume that $v$ is strongly monotone, that is
\[\exists \beta>0, \quad \forall x,y\in \mathcal{K}, \langle v(x)-v(y),x-y\rangle_\mathcal{K}\geq \beta \|x-y\|^2_\mathcal{K},\]
then $v$ is an invertible operator and it follows that there exists a $x^*$ solution to \eqref{eq: variational inequality in hilbert}. A very natural question is how to compute $x^*$ in this setting. To that end let us fix some $x_0\in \mathcal{K}, \gamma>0$ and introduce the following sequence 
\[\forall n\geq 1, \quad x_{n+1}=x_n-\gamma v(x_{n+1}).\]
Letting 
\[v_\gamma :x\mapsto x+\gamma v(x),\]
this is equivalent to 
\[\forall n\geq 1, \quad x_{n+1}=v_\gamma^{-1}(x_n).\]
However since $v$ is $\beta-$strongly monotone, it follows that 
\[\forall x,y\in \mathcal{K}, \quad \langle v_\gamma(x)-v_\gamma(y),x-y\rangle \geq (1+\gamma \beta)\|x-y\|^2_\mathcal{K}.\]
This implies that \[\|v_\gamma^{-1}\|_{Lip}\leq \frac{1}{1+\gamma \beta}.\]
Since $v_\gamma^{-1}$ is a contraction on $\mathcal{K}$, we deduce from the definition of $(x_n)_{n\in \mathbb{N}}$ that it converges exponentialiy to the fixed point of $v_\gamma^{-1}$ which also happens to be $x^*$. By this method we have constructed a sequence $(x_n)_{n\in \mathbb{N}}$ which converges to the unique solution $x^*$ of \eqref{eq: variational inequality in hilbert}. The main issue with the above construction is that the definition of $x_{n+1}$ from $x_n$ is implicit. In fact since it may require a fixed point procedure, computing $x_{n+1}$ at each iteration can be just as hard as computing $x^*$ directly. This is where extra-gradient method come into play, the key idea is to approximate the sequence with 
\[x_{n+1}\approx x_n-\gamma v(x_n-\gamma v(x_n)),\]
at each step. The definition of $x_{n+1}$ is now explicit in function of $x_n$, and it turns out that this approximation yields good results both in theory and practise. In particular, we remind this quite general inequality from \cite{NEURIPS2021_6d65b5ac}
\begin{lemma}
\label{GEG inequality}
Let $(\mathcal{K},\langle \cdot,\cdot \rangle_\mathcal{K})$ be a Hilbert space, and let $(X_n,X_{n+\frac{1}{2}},Y_n)_{n\in \mathbb{N}}\subset\mathcal{K}^3 $ be defined through
\[\left\{\begin{array}{c}
     X_{n+\frac{1}{2}}=X_n-\gamma_n V_n,  \\
     Y_{n+1}=Y_n-V_{n+\frac{1}{2}},\\
     X_{n+1}=\gamma_{n+1}Y_{n+1},
\end{array}\right. \]
for $(V_n,V_{n+\frac{1}{2}})_{n\in \mathbb{N}}\subset \mathcal{K}^2$, $(\gamma_n)_{n\in \mathbb{N}}\subset \reels^+$ a non increasing sequence of step size and 
\[Y_1=\frac{1}{\gamma_1}X_1.\]
Then the following holds for any $x\in \mathcal{K}$
\begin{gather*}2\sum_{i=1}^n \langle V_{i+\frac{1}{2}},X_{i+\frac{1}{2}}-x\rangle_\mathcal{K}\\
    \leq \frac{\|x-X_1\|_\mathcal{K}^2}{\gamma_{1}}+\left(\frac{1}{\gamma_{n+1}}-\frac{1}{\gamma_1}\right)\|x\|^2_\mathcal{K}+\sum_{i=1}^n \left(\gamma_i\| V_{n+\frac{1}{2}}-V_n\|_\mathcal{K}^2-\frac{1}{\gamma_i}\| X_{n+\frac{1}{2}}-X_n\|_\mathcal{K}^2\right)\end{gather*}
\end{lemma}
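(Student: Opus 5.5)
The plan is to establish a one-step inequality for each index $i$ and then sum it, so that the potential terms telescope. Throughout I read the indices inside the last sum of Lemma \ref{GEG inequality} as $i$, i.e. as $\gamma_i\|V_{i+\frac12}-V_i\|^2_\mathcal{K}-\frac{1}{\gamma_i}\|X_{i+\frac12}-X_i\|^2_\mathcal{K}$. The natural potential is
\[\Phi_i(x)=\frac{\|x-X_i\|_\mathcal{K}^2}{\gamma_i}=\frac{\|x\|^2_\mathcal{K}}{\gamma_i}-2\langle Y_i,x\rangle_\mathcal{K}+\gamma_i\|Y_i\|^2_\mathcal{K},\]
which uses $X_i=\gamma_iY_i$. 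The key structural fact is $V_{i+\frac12}=Y_i-Y_{i+1}$.

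\textbf{Step 1 (dual-averaging part).} Split
\[\langle V_{i+\frac12},X_{i+\frac12}-x\rangle=\langle Y_i-Y_{i+1},X_{i+1}-x\rangle+\langle V_{i+\frac12},X_{i+\frac12}-X_{i+1}\rangle.\]
Expanding $\Phi_i(x)-\Phi_{i+1}(x)$ and using $X_{i+1}=\gamma_{i+1}Y_{i+1}$, a direct computation gives
\[2\langle Y_i-Y_{i+1},X_{i+1}-x\rangle=\Phi_i(x)-\Phi_{i+1}(x)+\Big(\tfrac{1}{\gamma_{i+1}}-\tfrac{1}{\gamma_i}\Big)\|x\|^2-(\gamma_i-\gamma_{i+1})\|Y_i\|^2-\gamma_{i+1}\|Y_i-Y_{i+1}\|^2 .\]
Since $(\gamma_n)$ is non-increasing, the third term on the right is nonpositive and can be dropped. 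Summing over $i=1,\dots,n$, the $\Phi$ terms telescope to $\Phi_1(x)-\Phi_{n+1}(x)\le \|x-X_1\|^2/\gamma_1$, and the $\|x\|^2$ terms telescope to $(\frac1{\gamma_{n+1}}-\frac1{\gamma_1})\|x\|^2$. These are exactly the first two terms of the claimed bound.

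\textbf{Step 2 (extragradient part).} It remains to show, for each $i$,
\[2\langle V_{i+\frac12},X_{i+\frac12}-X_{i+1}\rangle-\gamma_{i+1}\|V_{i+\frac12}\|^2\le \gamma_i\|V_{i+\frac12}-V_i\|^2-\tfrac1{\gamma_i}\|X_{i+\frac12}-X_i\|^2 .\]
First write $V_{i+\frac12}=(V_{i+\frac12}-V_i)+V_i$. For the $V_i$ term, use $\gamma_iV_i=X_i-X_{i+\frac12}$ and the three-point identity
\[2\langle X_i-X_{i+\frac12},X_{i+\frac12}-X_{i+1}\rangle=\|X_i-X_{i+1}\|^2-\|X_i-X_{i+\frac12}\|^2-\|X_{i+\frac12}-X_{i+1}\|^2 .\]
For the difference term, apply Young's inequality:
\[2\langle V_{i+\frac12}-V_i,X_{i+\frac12}-X_{i+1}\rangle\le\gamma_i\|V_{i+\frac12}-V_i\|^2+\tfrac1{\gamma_i}\|X_{i+\frac12}-X_{i+1}\|^2 .\]
The $\|X_{i+\frac12}-X_{i+1}\|^2$ terms then cancel, and $-\frac1{\gamma_i}\|X_i-X_{i+\frac12}\|^2$ appears as required.

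\textbf{Main obstacle.} What is left over is $\frac1{\gamma_i}\|X_i-X_{i+1}\|^2$, which must be absorbed by the negative terms discarded or kept in Step 1, namely $-\gamma_{i+1}\|Y_i-Y_{i+1}\|^2$ and $-(\gamma_i-\gamma_{i+1})\|Y_i\|^2$. The difficulty is that the step changes between iterations:
\[X_i-X_{i+1}=\gamma_{i+1}(Y_i-Y_{i+1})+(\gamma_i-\gamma_{i+1})Y_i .\]
I would therefore bound $\|X_i-X_{i+1}\|^2$ by convexity of the square, using the weights $\gamma_{i+1}/\gamma_i$ and $(\gamma_i-\gamma_{i+1})/\gamma_i$, which gives
\[\tfrac1{\gamma_i}\|X_i-X_{i+1}\|^2\le \gamma_{i+1}\|Y_i-Y_{i+1}\|^2+(\gamma_i-\gamma_{i+1})\|Y_i\|^2 .\]
This means the nonpositive term must not be dropped in Step 1, but kept and spent exactly here.

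If the bookkeeping does not close with these weights, I would redo Step 1 using $X_{i+\frac12}$ rather than $X_{i+1}$ as the comparison point in the telescoping, which is the variant used in \cite{NEURIPS2021_6d65b5ac}. Summing the one-step inequalities over $i$ then yields the lemma.
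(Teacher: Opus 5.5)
Your proof is correct, and it does more than the paper does. The paper gives no derivation of Lemma~\ref{GEG inequality}: it imports the inequality from \cite{NEURIPS2021_6d65b5ac} and asserts that the finite-dimensional argument transfers to a Hilbert space. Your reading of the indices in the last sum as $i$ is the intended one, as the later use in the decreasing-step corollary confirms.

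Your potential $\Phi_i(x)=\|x-X_i\|_\mathcal{K}^2/\gamma_i$ is twice the Fenchel coupling of the quadratic regularizer, which is the standard energy for dual-averaging and extragradient templates of this kind. In spirit this is the argument one expects behind the citation. What your write-up buys is an explicit check of the paper's Hilbert-space claim: you only use polarization identities, Young's inequality and convexity of $\|\cdot\|_\mathcal{K}^2$, with no coordinates or compactness.

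It also gives a little more than stated. You may keep $-\|x-X_{n+1}\|_\mathcal{K}^2/\gamma_{n+1}$ on the right-hand side. Expanding everything also shows that the combined slack of your Young and convexity steps at step $i$ is exactly $(\gamma_i-\gamma_{i+1})\|Y_{i+1}\|_\mathcal{K}^2$. So for constant steps the lemma is an identity up to the discarded term $\|x-X_{n+1}\|_\mathcal{K}^2/\gamma$.

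The one thing to clean up is the bookkeeping between Step 1 and Step 2. The inequality displayed in Step 2, where $-(\gamma_i-\gamma_{i+1})\|Y_i\|_\mathcal{K}^2$ has been dropped, is false in general when $\gamma_{i+1}<\gamma_i$. After expansion it is equivalent to $2\langle V_{i+\frac12},Y_i\rangle_\mathcal{K}\le \|V_{i+\frac12}\|_\mathcal{K}^2$. This fails, for example, for $i=1$, $X_1\neq 0$ and $V_{\frac32}=\varepsilon Y_1$ with $\varepsilon\in(0,2)$.

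So that term must be kept, as you conclude under ``Main obstacle''. With it retained, the per-step inequality reduces to $(\gamma_i-\gamma_{i+1})\|V_{i+\frac12}-Y_i\|_\mathcal{K}^2\ge 0$. Your convexity bound with weights $\gamma_{i+1}/\gamma_i$ and $(\gamma_i-\gamma_{i+1})/\gamma_i$ closes it exactly, so the fallback via $X_{i+\frac12}$ is not needed. State Step 1 with the term retained and Step 2 with it on the left-hand side, and the proof is complete.
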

\begin{proof}
Although the original proof is conducted for a sequence in $\reels^d$, since it only uses classical results on vector spaces endowed with an inner product, there is no difficulty in extending the proof to a general Hilbert space.
\end{proof}
\begin{remarque}
Whenever $\gamma_n\equiv \gamma$ is a constant sequence and $V_n=v(X_n)$ for some function $v$, this is equivalent to 
\[\forall n\geq 1, \quad X_{n+1}=X_n-\gamma v(X_n-\gamma v(X_n)).\]
The above inequality allows to work in a very general framework compared to our very limited introduction on extragradient methods. 
\end{remarque}
\section{Decoupling algorithm}
\label{section: FBSDE}
In this first part, we present the main idea behind the algorithm we introduce to solve FBSDEs numerically. We present it first in the context of mean field games of control, see \cite{propagation-of-monotonicity} for a presentation of mean field games of controls in the displacement monotone setting. Let us emphasize 
that there is already an extensive litterarure on extragradient methods \cite{PMLR,NEURIPS2021_6d65b5ac} however the idea of using them for (possibly mean field type) FBSDEs appears entirely new though it is in our opinion quite natural in the case of MFGs. In this section we also consider dynamics without common noise. 
\subsection{A motivating example from mean field games}
We start with some reminders on mean field games of controls in the displacement monotone framework, from the formulation of the equilibrium condition to the associated FBSDE. In particular, we adopt a strategy which is sligthly different from usual. Instead of introducing an implicit fixed point mapping to write the forward backward system, we show that the equilibrium can be expressed explicitely in function of the coefficients through the use of an infinite dimensional inverse. Although this formulation is strictly equivalent to more standard ones (which in general rely on an implicit fixed point mapping), for mean field games of controls, we believe this makes the exposition clearer and more straightforward (and in particular totally explicit). This idea will also be important in later sections to link MFGs with more general FBSDEs. 
\subsubsection{Uniqueness and characterization of optimal controls}
Let us first recall exactly the formulation of the mean field game problem in this setting. For a given flow of measures $(m_t)_{t\in[0,T]}\in \mathcal{P}_2([0,T],\reels^{2d})$ we consider the cost
\[J(\alpha,(m_t)_{t\in[0,T]})=\left\{\begin{array}{c}
     \displaystyle \esp{g(X^\alpha_T,\mu_T)+\int_0^T L(X^\alpha_t,\alpha_t,m_t)dt},  \\
     \displaystyle X^\alpha_t=X_0-\int_0^t\alpha_tdt+\sqrt{2\sigma} B_t,\\
     \mu_T=\pi_d m_T.
\end{array}\right. ,\]
where $\pi_d m$ indicates the marginal of $m$ over the first $d$ variables for any $m\in \mathcal{P}_2(\reels^{2d})$. We suppose that $J(\alpha,(m_t)_{t\in [0,T]})$ represents the cost paid by a player with control $\alpha$ whenever the distribution of all other players and their controls is $(m_t)_{t\in [0,T]}$. The mean field game problem consists in finding an equilibrium, that is, a distribution $(m^*_t)_{t\in [0,T]}$ such that after solving 
\[\inf_\alpha J\left(\alpha,(m^*_t)_{t\in [0,T]}\right),\]
the players are still be distributed along $(m^*)_{t\in [0,T]}$. There exist several mathematically equivalent rigorous definitions of this problem. In this article we focus on formulating it directly at the level of controls, which is natural for displacement monotone MFGs.
Namely, solving the mean field game problem is equivalent to finding a control $(\alpha^*_t)_{t\in[0,T]}\subset  ^T$ such that 
\begin{equation}
\label{mfgc}
\alpha^*=\underset{\alpha}{\text{arginf}} J\left(\alpha,\left(\mathcal{L}(X^{\alpha^*}_t,\alpha^*_t)\right)_{t\in[0,T]}\right). \end{equation}
Clearly, this definition results from a fixed point of an optimisation problem characteristic of mean field games. At first glance, the existence of such a control, let alone the uniqueness is not clear. Let us start with a standard result on finite dimensional optimal control
\begin{lemma}
\label{lemma: first order optimality conditions}
Suppose that $(x,\alpha)\mapsto (g(x,\mu),L(x,\alpha,m))\in C^{1,1}(\reels^{2d},\reels^2)$ uniformly in $m\in \mathcal{P}_2(\reels^{2d})$
\begin{enumerate}
    \item[-] If $\alpha\in  \left(\mathcal{H}^T\right)^d$ satisfies 
    \begin{equation}
        \label{eq: optimal control problem}
        \alpha\in \underset{\alpha'\in \left(\mathcal{H}^T\right)^d}{\mathrm{arginf}} J(\alpha',(m_t)_{t\in[0,T]}),
    \end{equation}
    then for any $\alpha' \in \left(\mathcal{H}^T\right)^d$ the following holds 
\begin{equation}
\label{first order optimality}
\langle \nabla_x g(X^\alpha_T,\mu_T),X^{\alpha'}_T-X^\alpha_T\rangle+\int_0^T \langle \left(\begin{array}{c}
         \nabla_x L  \\
         \nabla_\alpha L
    \end{array}\right)(X^\alpha_t,\alpha_t,m_t),\left(\begin{array}{c}
         X^{\alpha'}_t-X^\alpha_t  \\
         \alpha'_t-\alpha_t
    \end{array}\right)\rangle dt\geq 0,
    \end{equation}
    where \[\mu_T(dx)=\int_{y\in \reels^d}m_T(dx,dy),\]
    indicates the marginal of $m_T$ over its first $d$ variables.
    \item[-] If, furthermore, $x\mapsto g(x,\mu)$ is convex uniformly in $\mu \in \mathcal{P}_2(\reels^d)$, and $(x,\alpha)\mapsto L(x,\alpha,m)$ is convex, strongly in $\alpha$, uniformly in $m\in \mathcal{P}_2(\reels^{2d})$ then for any flow of measures $(m_t)_{t\in[0,T]}\subset \mathcal{P}_2(\reels^{2d})$, there exists a unique solution to the optimal control problem 
    \[\inf_{\alpha\in \left(\mathcal{H}^T\right)^d} J(\alpha,(m_t)_{t\in[0,T]}),\]
    and it is the unique control satisfying \eqref{first order optimality}. 
\end{enumerate}
\end{lemma}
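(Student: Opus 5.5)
The plan is to treat the cost as a functional on the Hilbert space $\left(\mathcal{H}^T\right)^d$ with the flow $(m_t)_{t\in[0,T]}$ frozen, and to read everything off its Gâteaux derivative. The starting observation is that the state is affine in the control. Indeed, $X^\alpha_t=X_0-\int_0^t\alpha_s ds+\sqrt{2\sigma}B_t$, so
\[X^{\alpha+\varepsilon h}_t-X^\alpha_t=-\varepsilon\int_0^t h_sds\quad\text{and}\quad \|X^{\alpha'}_t-X^\alpha_t\|\leq \sqrt{t}\,\|\alpha'-\alpha\|_{\mathcal{H}^T}.\]
Hence $\alpha\mapsto (X^\alpha,\alpha)$ is a bounded affine map. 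For the first point, fix $\alpha'$, set $h=\alpha'-\alpha$ and $\alpha^\varepsilon=\alpha+\varepsilon h$, so that $X^{\alpha^\varepsilon}=X^\alpha+\varepsilon(X^{\alpha'}-X^\alpha)$. I will then write the finite difference $J(\alpha^\varepsilon)-J(\alpha)$ using the fundamental theorem of calculus along the segment. The $C^{1,1}$ assumption (Lipschitz gradients, uniformly in the measure argument) gives the bound
\[\left|J(\alpha^\varepsilon)-J(\alpha)-\varepsilon\,\Delta(\alpha,\alpha')\right|\leq C\varepsilon^2\left(\|X^{\alpha'}_T-X^\alpha_T\|^2+\int_0^T\|X^{\alpha'}_t-X^\alpha_t\|^2+\|\alpha'_t-\alpha_t\|^2dt\right),\]
where $\Delta(\alpha,\alpha')$ denotes the left-hand side of \eqref{first order optimality}. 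Since $\alpha$ is a minimiser, $J(\alpha^\varepsilon)-J(\alpha)\geq 0$. Dividing by $\varepsilon>0$ and letting $\varepsilon\to0$ gives $\Delta\geq0$, which is \eqref{first order optimality}. Linear growth of the gradients ensures that every expectation involved is finite for $L^2$ controls.

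For the second point, convexity of $g$ and $L$ composed with the affine map makes $J(\cdot,(m_t))$ convex. Strong convexity of $L$ in $\alpha$ gives
\[J(\alpha')\geq J(\alpha)+\Delta(\alpha,\alpha')+\tfrac{\lambda}{2}\|\alpha'-\alpha\|^2_{\mathcal{H}^T},\]
obtained by integrating the pointwise convexity inequalities in $(x,\alpha)$ over $t$ and $\Omega$. This inequality settles both the characterisation and uniqueness. If $\alpha$ satisfies \eqref{first order optimality}, then $J(\alpha')\geq J(\alpha)$ for all $\alpha'$, so $\alpha$ is a minimiser, and in fact the unique one because the inequality is strict when $\alpha'\neq\alpha$. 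Combined with the first point, a control is optimal if and only if it satisfies \eqref{first order optimality}.

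The main obstacle is existence. $J$ is convex and, by the estimate above, continuous on $\left(\mathcal{H}^T\right)^d$, hence weakly lower semicontinuous. For coercivity I would apply the strong convexity inequality at a fixed reference control $\alpha^0=0$. This gives $J(\alpha')\geq J(0)-C\|\alpha'\|+\frac{\lambda}{2}\|\alpha'\|^2$, where the linear term is controlled because $\Delta(0,\alpha')$ is bounded by $\|\nabla L(X^0,0,m)\|\,\|\alpha'\|$ together with the affine bound $\|X^{\alpha'}-X^0\|\lesssim\|\alpha'\|$. So $J$ is coercive and strongly convex, and the direct method yields a minimiser: minimising sequences are bounded, weak limits exist in the closed convex set of adapted processes, and lower semicontinuity passes to the limit. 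One point needs care: the convexity of $L$ must be joint in $(x,\alpha)$ with strong convexity in $\alpha$ only. This is enough precisely because the $x$-part is itself an affine function of $\alpha$, so no strong convexity in $x$ is required.
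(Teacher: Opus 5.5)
Your proposal is correct and follows essentially the paper's route: a one-sided directional derivative of $J(\cdot,(m_t)_{t\in[0,T]})$ at the minimiser gives the first-order condition, and strong convexity of $J$ on $\left(\mathcal{H}^T\right)^d$ gives existence and uniqueness of the minimiser. The only differences are minor. You obtain uniqueness of controls satisfying the first-order condition by showing sufficiency (via the inequality $J(\alpha')\geq J(\alpha)+\Delta(\alpha,\alpha')+\frac{\lambda}{2}\|\alpha'-\alpha\|^2$), whereas the paper adds the two variational inequalities and uses monotonicity of the gradients to get $\|\alpha^1-\alpha^2\|_T\le 0$; and you spell out the remainder estimate and the direct-method existence argument that the paper leaves as ``standard''.
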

\begin{proof}
Let us assume that there exists a control $\alpha \in \left(\mathcal{H}^T\right)^d$ satisfying \eqref{eq: optimal control problem}, for a given control $\alpha'\in \left(\mathcal{H}^T\right)^d$, \eqref{first order optimality} is obtained by observing that for any $\lambda\in (0,1]$
\[\frac{1}{\lambda}\left(J(\alpha+\lambda (\alpha'-\alpha),(m_t)_{t\in[0,T]})-J(\alpha,(m_t)_{t\in[0,T]})\right)\geq 0.\]
The result follows naturally by taking the limit as $\lambda \to 0$. 
As for the second proposition, under the additional convexity assumptions
\[\alpha \mapsto J(\alpha,(m_t)_{t\in[0,T]})\]
is strongly convex on $\left(\mathcal{H}^T\right)^d$ and the existence and uniqueness of a minimizer follow from standard optimization theory. Finally let us assume that there exist two controls $\alpha^1,\alpha^2$ satisfying \eqref{first order optimality}, then the convexity of coefficients implies that
\[\|\alpha^1-\alpha^2\|_T\leq 0,\]
yielding $\alpha^1=\alpha^2$ in $\left(\mathcal{H}^T\right)^d$.
\end{proof}
We now make an assumption on the displacement monotonicity of $L$ and $g$.
\begin{hyp}
\label{disp monotone mfg}
\begin{gather*}
\exists c_L>0, \quad \forall (X,Y,\alpha,\alpha')\in \mathcal{H}^{4d}, \quad \\
\langle \nabla_x g(X,\mathcal{L}(X))-\nabla_x g(Y,\mathcal{L}(Y)),X-Y\rangle \geq 0,\\
\langle \left(\begin{array}{c}
\nabla_x L\\
\nabla_\alpha L
\end{array}\right)
(X,\alpha,\mathcal{L}(X,\alpha))-\left(\begin{array}{c}
\nabla_x L\\
\nabla_\alpha L
\end{array}\right)(Y,\alpha',\mathcal{L}(Y,\alpha')),\left(\begin{array}{c}
     X-Y  \\
     \alpha-\alpha'
\end{array}\right)\rangle \geq c_L\|\alpha-\alpha'\|^2
\end{gather*}
\end{hyp}
\begin{remarque}
Hypothesis \ref{disp monotone mfg} implies the convexity of $x\mapsto g(x,\mu)$ and the $c_L-$strong convexity of $(x,\alpha)\mapsto L(x,\alpha,m)$ in $\alpha$ uniformly in $m\in \mathcal{P}_2(\reels^{2d})$, as a direct consequence of Proposition \ref{prop: expectation to pointwise}.
\end{remarque}
We now show that under these assumptions, there exists a unique solution to a FBSDE involving the Hilbertian inverse of $\nabla_\alpha L$. 

\begin{hyp}
    \label{hyp: lipschitz regularity mfgc}
    the derivatives $\nabla_xg,\nabla_xL ,\nabla_\alpha L$ are well defined and continuous in all their arguments. Moreover  \[(x,a,m)\mapsto (\nabla_x g(x,\mu),\nabla_x L(x,a,m),\] is Lipschitz and \[(x,m)\mapsto \nabla_\alpha L(x,a,m),\] is Lipschitz uniformly in $a \in \reels^d$.
\end{hyp}
\begin{lemma}
    \label{lemma: invertibility and wellposedness under disp monotone}
    Under Hypotheses \ref{disp monotone mfg} and \ref{hyp: lipschitz regularity mfgc}, the Hilbertian mapping 
    \[\alpha \mapsto  \nabla_\alpha L(X,\alpha,\mathcal{L}(X,\alpha)),\]
    admits a continuous inverse in $\alpha\in \mathcal{H}^d$ for a given $X\in \mathcal{H}^d$ denoted
    \[\alpha\mapsto \nabla_\alpha \tilde{L}^{-1}(X,\alpha).\]
    Moreover, there exists a Lipschitz continuous function $L_{inv}:\reels^{2d}\times \mathcal{P}_2(\reels^{2d})\to \reels^d$ such that 
    \[\forall (\alpha,X)\in \mathcal{H}^{2d}, \quad \nabla_\alpha \tilde{L}^{-1}(X,\alpha)=L_{inv}(X,\alpha, \mathcal{L}(X,\alpha)),\]
    and for any initial condition $X_0\in \mathcal{H}^d$ there exists a unique strong solution $(X_t,U_t,Z_t)_{t\in[0,T]}$ to the forward backward system

\begin{equation}
\label{MFG FBSDE}
\left\{
\begin{array}{l}
     \displaystyle X_t=X_0-\int_0^t \nabla_\alpha \tilde{L}^{-1}(X_s,U_s)ds+\sqrt{2\sigma}B_t, \\
    \displaystyle  U_t= U_T+\int_t^T \nabla_x\tilde{L}(X_s,\nabla_\alpha \tilde{L}^{-1}(X_s,U_s))ds-\int_t^T Z_sdB_s,\\
    U_T=\nabla_xg(X_T,\mathcal{L}(X_T)),
\end{array}
\right.
\end{equation}
where we remind that for any $(X,U)\in \mathcal{H}^{2d}$
\[ \nabla_x \tilde{L}(X,U)=\nabla_x L(X,U,\mathcal{L}(X,U)).\]
\end{lemma}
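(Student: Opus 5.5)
Fix $X\in\mathcal{H}^d$ and set $\Phi_X(\alpha)=\nabla_\alpha L(X,\alpha,\mathcal{L}(X,\alpha))$. Taking $Y=X$ in Hypothesis \ref{disp monotone mfg} kills the $x$-component of the inner product, so
\[\langle \Phi_X(\alpha)-\Phi_X(\alpha'),\alpha-\alpha'\rangle\geq c_L\|\alpha-\alpha'\|^2 .\]
Thus $\Phi_X$ is $c_L$-strongly monotone on $\mathcal{H}^d$. It is also continuous, and Lipschitz in $\alpha$ once we know pointwise Lipschitz continuity in $a$.

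For that pointwise Lipschitz bound, and for monotonicity in $a$ at fixed measure, I would use Proposition \ref{prop: expectation to pointwise} and Corollary \ref{from lip hilbert to lip x}. The only place I must watch this is whether Hypothesis \ref{hyp: lipschitz regularity mfgc} really gives Lipschitz continuity in $a$. If it does not, I would invoke the Browder–Minty theorem directly: a continuous strongly monotone operator on a Hilbert space is surjective. Strong monotonicity gives injectivity and $\|\Phi_X^{-1}(u)-\Phi_X^{-1}(u')\|\leq c_L^{-1}\|u-u'\|$. This defines $\nabla_\alpha\tilde L^{-1}(X,\cdot)$ and shows it is continuous.

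Joint Lipschitz continuity in $(X,u)$ follows by a standard perturbation argument. Let $\alpha=\Phi_X^{-1}(u)$ and $\alpha'=\Phi_Y^{-1}(u')$. Then
\[c_L\|\alpha-\alpha'\|^2\leq\langle \Phi_X(\alpha)-\Phi_X(\alpha'),\alpha-\alpha'\rangle=\langle u-u'+\Phi_Y(\alpha')-\Phi_X(\alpha'),\alpha-\alpha'\rangle .\]
Since $\nabla_\alpha L$ is Lipschitz in $(x,m)$ uniformly in $a$, and $\mathcal{W}_2(\mathcal{L}(X,\alpha'),\mathcal{L}(Y,\alpha'))\leq\|X-Y\|$, we get
\[\|\alpha-\alpha'\|\leq c_L^{-1}\bigl(\|u-u'\|+C\|X-Y\|\bigr).\]

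The key structural step, which I expect to be the main obstacle, is to show that the inverse is given by a function $L_{inv}(x,u,\mathcal{L}(X,u))$. The idea is to construct it pointwise. Given a joint law $\nu=\mathcal{L}(X,U)$, find a law $m$ on $\reels^{2d}$ and a map $a(x,u)$ such that $\nabla_\alpha L(x,a(x,u),m)=u$ for all $(x,u)$ and $m=(x,a(x,\cdot))_\#\nu$.
\begin{enumerate}
\item[(i)] For fixed $m$, the pointwise map $a\mapsto\nabla_\alpha L(x,a,m)$ is $c_L$-strongly monotone, by the pointwise consequence of Hypothesis \ref{disp monotone mfg} noted in the Remark. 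It is therefore invertible with a Lipschitz inverse $\ell(x,u,m)$.
\item[(ii)] The consistency condition $m=\mathcal{L}(X,\ell(X,U,m))$ is a fixed point in $m$. Its existence and uniqueness come from the Hilbertian inverse already built: take $\alpha=\Phi_X^{-1}(U)$ and $m=\mathcal{L}(X,\alpha)$. Then $\alpha=\ell(X,U,m)$ almost surely, so $\alpha$ is $\sigma(X,U)$-measurable.
\item[(iii)] By uniqueness, $m$ depends only on $\nu=\mathcal{L}(X,U)$, because the construction is invariant under changes of probability space carrying the same law. Setting $L_{inv}(x,u,\nu)=\ell(x,u,m(\nu))$ gives the representation.
\item[(iv)] The Lipschitz bound in $\nu$ follows from the Hilbertian estimate above, choosing optimal couplings of $\nu,\nu'$, together with the Lipschitz continuity of $\ell$.
\end{enumerate}

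Finally, substitute $\alpha_s=L_{inv}(X_s,U_s,\mathcal{L}(X_s,U_s))$ into \eqref{MFG FBSDE}. This gives a mean field FBSDE with Lipschitz coefficients $F=L_{inv}$, $G=\nabla_x L(x,L_{inv},\cdot)$ and $g=\nabla_x g$. Uniqueness follows from the usual monotonicity computation. Apply Itô to $(X^1_t-X^2_t)\cdot(U^1_t-U^2_t)$ and take expectations. Using Hypothesis \ref{disp monotone mfg} with $\alpha^i=\Phi_{X^i}^{-1}(U^i)$, which satisfies $U^i=\nabla_\alpha\tilde L(X^i,\alpha^i)$, this yields
\[c_L\int_0^T\|\alpha^1_t-\alpha^2_t\|^2dt\leq 0,\]
hence $\alpha^1=\alpha^2$ and then $X^1=X^2$ and $U^1=U^2$.

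Existence follows by the method of continuation in the coefficients, in the style of Peng–Wu and Bensoussan et al., for monotone FBSDEs in the Hilbertian sense. Start from a trivially solvable system and interpolate, using the same energy estimate to extend the solvable interval by a fixed step depending only on $c_L$ and the Lipschitz constants. An alternative route to existence is to use the unique solution of the convex control problem in Lemma \ref{lemma: first order optimality conditions} together with a fixed point argument, but I would try continuation first.
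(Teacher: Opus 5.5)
Your proposal is correct. Up to global existence it is the paper's argument:
\begin{itemize}
\item invertibility comes from strong monotonicity of the lift;
\item joint Lipschitz continuity comes from the same perturbation inequality (the paper's form with $\frac{C_F^2}{2c_L}\|X-Y\|^2$ is your estimate after Young's inequality);
\item uniqueness is obtained by It\^o's formula on $(X^1-X^2)\cdot(U^1-U^2)$.
\end{itemize}
For the representation, the paper only cites Lemma 3.10 of its reference and records $L_{inv}(x,u,\mathcal{L}(X,\alpha))=D_pH(x,u,\mathcal{L}(X,\nabla_\alpha\tilde{L}^{-1}(X,\alpha)))$. Since $D_pH(x,\cdot,m)$ is the pointwise inverse of $a\mapsto\nabla_\alpha L(x,a,m)$, this is exactly your $\ell(x,u,m(\nu))$, and your steps (ii)--(iv) supply the argument the paper outsources.

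The genuine difference is existence on arbitrary horizons. The paper (Lemma \ref{lemma: wellposedness fbsde strong monotonicity on F}) starts from the energy identity, which controls only the drift difference $\|\tilde F(X,U)-\tilde F(Y,V)\|$. It converts this into $\|U_0-V_0\|\le C\|X-Y\|$ by a forward Gronwall on $X$ and a backward Gronwall on $U$, and then glues short-time solutions through a Lipschitz decoupling field.

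You use continuation instead, which avoids that machinery. However, this drift-type monotonicity is not of Peng--Wu form (strong in $x$ or in $u$), so you must make two points explicit:
\begin{itemize}
\item The interpolation has to preserve it. Scaling $(\tilde F,\tilde G,\tilde g)$ by $\lambda\in[0,1]$, starting from the decoupled system at $\lambda=0$, does, because $\lambda c_L\|\Delta\tilde F\|^2\ge c_L\|\lambda\Delta\tilde F\|^2$.
\item The energy estimate must be upgraded to bounds on $\Delta X,\Delta U$ that absorb the perturbation cross-terms uniformly in $\lambda$. This is done by exactly the paper's forward/backward Gronwall step.
\end{itemize}
So both routes rest on the same a priori estimate and differ only in how it is turned into global existence.

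One small correction concerns Lipschitz continuity in $a$. Corollary \ref{from lip hilbert to lip x} cannot give you pointwise Lipschitz continuity in $a$, since it takes Hilbertian Lipschitz continuity as input. As you suspected, Hypothesis \ref{hyp: lipschitz regularity mfgc} does not assume it. Your Browder--Minty fallback then needs the lift to map $\mathcal{H}^d$ into itself (for example, linear growth in $a$). The paper also passes over this point.
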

\begin{proof}
Let us first observe that $\nabla_\alpha \tilde{L}^{-1}$ is well defined and Lipschitz as a function from $\mathcal{H}^{2d}$ into $\mathcal{H}^d$. For a given $X\in \mathcal{H}^d$, the invertibility follows from the strong monotonicity of $\alpha \mapsto \nabla_\alpha L(X,\alpha)$ on $\mathcal{H}^d$. The fact that the mapping $(X,U)\mapsto \nabla_\alpha \tilde{L}^{-1}(X,U)$ is Lipschitz on $\mathcal{H}^{2d}$ follows from the following inequality
\begin{gather*}
    \forall (X,Y,\alpha,\alpha')\in \mathcal{H}^{4d},\\
    \frac{C_F^2}{2c_L}\|X-Y\|^2+\langle \nabla_\alpha\tilde{L}(X,\alpha)-\nabla_\alpha \tilde{L}(Y,\alpha'),\alpha-\alpha'\rangle \geq \frac{c_L}{2}\|\alpha -\alpha '\|^2,
\end{gather*}
where $C_F$ indicates the Lipschitz norm of $X\mapsto \nabla_\alpha \tilde{L}(X,\alpha)$ which is bounded independently of $\alpha\in \mathcal{H}^d$ by assumption. The fact that $\nabla_\alpha \tilde{L}^{-1}$ can be expressed as the lift of Lipschitz continuous function follows from \cite{monotone-sol-meynard} Lemma 3.10. In fact, one can directly check that for any $(x,u)\in \reels^{2d}, (X,\alpha)\in \mathcal{H}^{2d}$, $L_{inv}$ is given by 
\begin{equation}
    \label{def: Linv}
    L_{inv}(x,u,\mathcal{L}(X,\alpha))=D_p H(x,u,\mathcal{L}(X,\nabla_\alpha \tilde{L}^{-1}(X,\alpha)))\end{equation}

Consequently, \eqref{MFG FBSDE} is a FBSDE with Lipschitz coefficients. It is standard \cite{lipschitz-sol,probabilistic-mfg} that under this condition, wellposedness holds over a sufficiently short horizon of time and we now prove that this system is monotone yielding wellposedness on any time interval. Proceeding with 
\[\forall (X,U)\in \mathcal{H}^{2d} \quad 
\left\{\begin{array}{l}
    \tilde{F}(X,U)=\nabla_\alpha\tilde{L}^{-1}(X,U),\\
     \tilde{G}(X,U)=\nabla_xL\left(X,\nabla_\alpha \tilde{L}^{-1}(X,U),\mathcal{L}\left(X,\nabla_\alpha \tilde{L}^{-1}(X,U)\right)\right).\end{array}\right.\]
By the definition of $F=\nabla_\alpha \tilde{L}^{-1}$ and the displacement monotonicity of $L$
\begin{gather*}
    \forall (X,Y,\alpha,\alpha')\in \mathcal{H}^{4d},\\
    \langle \left(\begin{array}{c}
\tilde{F}\\
\tilde{G}
\end{array}\right)
(X,U)-\left(\begin{array}{c}
\tilde{F}\\
\tilde{G}
\end{array}\right)(Y,V),\left(\begin{array}{c}
     X-Y  \\
     U-V
\end{array}\right)\rangle \geq c_L\|\tilde{F}(X,U)-\tilde{F}(Y,V)\|^2.
\end{gather*}
Since $g$ is also displacement monotone by assumption, we believe the existence and uniqueness of a solution to \eqref{MFG FBSDE} for any $T>0$ follow from standard techniques. However since we couldn't find this exact result in the literature, we briefly explain how to obtain it in Lemma \ref{lemma: wellposedness fbsde strong monotonicity on F} in the Appendix.
\end{proof}
\begin{remarque}
\label{remarque: inverse of lift}
Since we are considering the Hilbertian inverse of \[\alpha \mapsto \nabla_\alpha(X,\alpha, \mathcal{L}(X,\alpha)),\] and not the inverse in a $\alpha \in \reels^d $ for a given measure $m\in \mathcal{P}_2(\reels^{2d})$, it is important to note that we do not have the equality 
\[\nabla_\alpha \tilde{L}^{-1}=D_pH,\]
where $(x,p,m)\mapsto H(x,p,m)$ is the Hamiltonian associated to $L$. In fact $L_{inv}$ is given exactly by \eqref{def: Linv}.
\end{remarque}
We now present how the FBSDE \eqref{MFG FBSDE} relates to equilibria of the mean field game of control \eqref{mfgc}.
\begin{thm}
    \label{thm: existence mfgc}
    Under Hypotheses \ref{disp monotone mfg} and \ref{hyp: lipschitz regularity mfgc}, there exists a unique solution to the mean field game of control \eqref{mfgc}. It is the unique control satisfying 
    \begin{gather}
\label{caracterisation of control mfg}
 \forall \alpha' \in \left(\mathcal{H}^T\right)^d, \quad\\ \nonumber \langle \nabla_x g(X^\alpha_T,\mathcal{L}(X^\alpha_T)),X^{\alpha'}_T-X^\alpha_T\rangle+\int_0^T \langle \left(\begin{array}{c}
         \nabla_x \tilde{L}  \\
         \nabla_\alpha \tilde{L}
    \end{array}\right)(X^\alpha_t,\alpha_t),\left(\begin{array}{c}
         X^{\alpha'}_t-X^\alpha_t  \\
         \alpha'_t-\alpha_t
    \end{array}\right)\rangle dt\geq  0.
    \end{gather}
    and is given by 
    \[\forall t\leq T,\quad \alpha_t=\nabla_\alpha \tilde{L}^{-1}(X_t,U_t),\]
    where $(X_t,U_t,Z_t)_{t\in[0,T]}$ is the unique solution to \eqref{MFG FBSDE}. 
\end{thm}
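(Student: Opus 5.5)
The proof has three parts. First I reduce the equilibrium problem \eqref{mfgc} to the variational characterization \eqref{caracterisation of control mfg} using Lemma \ref{lemma: first order optimality conditions}. Then I show that the control built from the FBSDE \eqref{MFG FBSDE} satisfies that characterization. Finally I prove uniqueness from the displacement monotonicity of Hypothesis \ref{disp monotone mfg}.

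\textbf{Reduction.} Let $\alpha^*$ be an equilibrium and set $m^*_t=\mathcal{L}(X^{\alpha^*}_t,\alpha^*_t)$. Then $\alpha^*$ minimizes $J(\cdot,(m^*_t))$, and by the Remark after Hypothesis \ref{disp monotone mfg} the convexity assumptions of Lemma \ref{lemma: first order optimality conditions} hold. Evaluating \eqref{first order optimality} along $\alpha^*$, with the measure frozen at $m^*$, gives exactly \eqref{caracterisation of control mfg}, since $\nabla_x L(X^{\alpha^*}_t,\alpha^*_t,m^*_t)=\nabla_x\tilde L(X^{\alpha^*}_t,\alpha^*_t)$. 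Conversely, suppose $\alpha$ satisfies \eqref{caracterisation of control mfg}. With $m$ frozen at $\mathcal{L}(X^\alpha,\alpha)$, this is the first order condition of the strongly convex frozen problem. By the second item of Lemma \ref{lemma: first order optimality conditions}, $\alpha$ is then the unique minimizer of that problem, hence an equilibrium. So equilibria coincide with solutions of \eqref{caracterisation of control mfg}.

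\textbf{Existence.} Take the unique solution $(X,U,Z)$ of \eqref{MFG FBSDE} from Lemma \ref{lemma: invertibility and wellposedness under disp monotone}, and set $\alpha_t=\nabla_\alpha\tilde L^{-1}(X_t,U_t)$. Then $X=X^\alpha$, and by definition of the Hilbertian inverse $\nabla_\alpha\tilde L(X_t,\alpha_t)=U_t$. Fix $\alpha'$ and write $\delta X_t=X^{\alpha'}_t-X_t=-\int_0^t(\alpha'_s-\alpha_s)ds$, which is adapted with $\delta X_0=0$. Apply Itô's formula to $U_t\cdot\delta X_t$ and take expectations; the stochastic integral is a true martingale because $Z\in\mathcal{H}^T$. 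This gives
\[\langle U_T,\delta X_T\rangle=-\int_0^T\langle \nabla_x\tilde L(X_t,\alpha_t),\delta X_t\rangle dt-\int_0^T\langle U_t,\alpha'_t-\alpha_t\rangle dt.\]
Substituting $U_T=\nabla_x g(X_T,\mathcal{L}(X_T))$ and $U_t=\nabla_\alpha\tilde L(X_t,\alpha_t)$ shows that the left side of \eqref{caracterisation of control mfg} is exactly zero, so $\alpha$ satisfies it with equality.

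\textbf{Uniqueness.} This is the step that needs the most care. Let $\alpha^1,\alpha^2$ both satisfy \eqref{caracterisation of control mfg}. Test the inequality for $\alpha^1$ with $\alpha'=\alpha^2$ and the one for $\alpha^2$ with $\alpha'=\alpha^1$, then add. Because the state dynamics are affine in the control, $X^{\alpha^2}-X^{\alpha^1}$ is the same difference in both inequalities, up to sign. The sum therefore reads
\[\langle \nabla_xg(X^1_T,\mathcal{L}(X^1_T))-\nabla_xg(X^2_T,\mathcal{L}(X^2_T)),X^1_T-X^2_T\rangle+\int_0^T\langle (\nabla_x,\nabla_\alpha)\tilde L(X^1_t,\alpha^1_t)-(\nabla_x,\nabla_\alpha)\tilde L(X^2_t,\alpha^2_t),(X^1_t-X^2_t,\alpha^1_t-\alpha^2_t)\rangle dt\le 0.\]
Hypothesis \ref{disp monotone mfg} makes the terminal term nonnegative and bounds the integrand below by $c_L\|\alpha^1_t-\alpha^2_t\|^2$. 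Hence $\|\alpha^1-\alpha^2\|_T=0$, so the equilibrium is unique and equals the control built from \eqref{MFG FBSDE}. The points to check here are that Hypothesis \ref{disp monotone mfg} applies pointwise in $t$, with the laws $\mathcal{L}(X^i_t,\alpha^i_t)$ appearing inside $\tilde L$, and that the integrability needed for the Itô step in the existence part follows from the Lipschitz bounds in Hypothesis \ref{hyp: lipschitz regularity mfgc}.
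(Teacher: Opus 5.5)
Your proposal is correct and follows the paper's three-step route. Equilibria satisfy \eqref{caracterisation of control mfg} by Lemma~\ref{lemma: first order optimality conditions}. At most one control satisfies it, by adding the two tested inequalities and using Hypothesis~\ref{disp monotone mfg}. The control $\nabla_\alpha\tilde{L}^{-1}(X_t,U_t)$ built from \eqref{MFG FBSDE} satisfies it, with equality by your It\^o computation. You also usefully make explicit the converse ``characterization $\Rightarrow$ equilibrium'' via the second item of Lemma~\ref{lemma: first order optimality conditions}; the paper needs this for existence but leaves it implicit.

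One small precision: $\int_0^t \delta X_s\cdot Z_s\,dB_s$ is a true martingale by BDG using both $Z\in\mathcal{H}^T$ and $\sup_t|\delta X_t|\le\int_0^T|\alpha'_s-\alpha_s|\,ds\in L^2$, not from $Z\in\mathcal{H}^T$ alone.
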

\begin{proof}
    The fact that any control solution to the mean field game problem must satisfy \eqref{caracterisation of control mfg} is a direct consequence of Lemma \ref{lemma: first order optimality conditions} in the displacement monotone setting. Uniqueness of a solution to the mean field game problem then follows from the fact that, under our displacement monotonicity assumption on $L,g$, there can be at most one control satisfying \eqref{caracterisation of control mfg}. Finally it remains to check that \eqref{caracterisation of control mfg} holds for  
    \[\forall t\leq T,\quad \alpha_t=\nabla_\alpha \tilde{L}^{-1}(X_t,U_t),\]
    which is a direct consequence of the definition of $(X_t,U_t,Z_t)_{t\in[0,T]}$ as a solution to the FBSDE \eqref{MFG FBSDE}.
\end{proof}
Existence and uniqueness results of a solution to the mean field game of control under the standing assumptions is already well known, see for example the recent work \cite{jackson2025quantitativeconvergencedisplacementmonotone}. A novelty here is that the forward backward system can be made explicit by using the Hilbertian inverse instead of relying on an implicit fixed point map, then MFGs of controls can be seen just as a particular case of mean field type FBSDEs. In our opinion, this makes the exposition more straightforward and will also be relevant later in the article. 
\subsubsection{An algorithm from the theory of monotone variational inequalities}
For a given control $(\alpha_t)_{t\in[0,T]}\in \left(\mathcal{H}^T\right)^d$, we first introduce the following backward process 
\[U^\alpha_t=\nabla_xg(X^\alpha_T,\mathcal{L}(X^\alpha_T))+\int_t^T \nabla_xL(X^\alpha_s,\alpha_s,\mathcal{L}(X^\alpha_s,\alpha_s))ds-\int_t^T Z^\alpha_sdB_s.\]
The existence and uniqueness of a pair $(U^\alpha,Z^\alpha)_{t\in[0,T]}$ for a given control is classic in the theory of decoupled forward backward systems, see  \cite{zhang2017backward} Theorem 4.3.1. Indeed since the pair $(\alpha_s,X^\alpha_s)_{s\in [0,T]}$ is known in this case, the above system can be viewed as a standard backward Lipschitz SDE of the form
\begin{equation}
\label{backward sde}
    Y_t=\xi-\int_t^T f(s,\omega,Y_s,Z_s)ds-\int_t^T Z_sdB_s,
\end{equation}
with 
\[f(s,\omega,Y_s,Z_s)=-\nabla_xL(X^\alpha_s(\omega),\alpha_s(\omega),\mathcal{L}(X^\alpha_s,\alpha_s)).\] 
We now introduce the following functional on $\left(\mathcal{H}^T\right)^d$
\[v:\FuncDef{\left(\mathcal{H}^T\right)^d}{\left(\mathcal{H}^T\right)^d}{(\alpha_t)_{t\in [0,T]}}{(\nabla_\alpha L(X^\alpha_t,\alpha_t,\mathcal{L}(X^\alpha_t,\alpha_t))-U^\alpha_t)_{t\in [0,T]}}.\]
Let us insist, as this will be important in later sections on the implementation of our method, that given a control $\alpha$, it is in practise quite straightforward and fast to approximate the couple $(X^\alpha_t,U^\alpha_t)_{t\in [0,T]}$ with a particle method. Indeed the forward component is linear in $\alpha$ and the evolution of the backward component depends only on the couple $(X^\alpha_t,\alpha_t)_{t\in [0,T]}$ which are then known. In fact it is well known that in this case
\[U^\alpha_t=\espcond{\nabla_x g(X^\alpha_T,\mathcal{L}(X^\alpha_T))+\int_t^T \nabla_xL(X^\alpha_s,\alpha_s,\mathcal{L}(X^\alpha_s,\alpha_s))ds}{\mathcal{F}_t},\]
and it can be approximated efficiently with regresssion based methods.
\begin{lemma}
    \label{lemma: def v mfgc}
Under Hypothesis \ref{hyp: lipschitz regularity mfgc}, there exists a constant $C_v$ depending only on the Lipschitz constants of $\nabla_xg,\nabla_x L$ and $T$ only such that 
\[\forall (\alpha,\alpha')\in \left(\mathcal{H}^T\right)^{2d},\quad \|v(\alpha)-v(\alpha')\|_T\leq C_v \|\alpha-\alpha'\|_T.\]
Furthermore, if Hypothesis \ref{disp monotone mfg} holds then $v$ is $c_L$-strongly monotone on $\left(\mathcal{H}^T\right)^d$ and a control $\alpha^*\in \left(\mathcal{H}^T\right)^d$ is a solution to the mean field game of control problem \eqref{mfgc} if and only if
\[\forall \alpha\in \left(\mathcal{H}^T\right)^d \quad \langle v(\alpha^*),\alpha-\alpha^*\rangle \geq 0.\]
\end{lemma}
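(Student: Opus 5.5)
The plan has three parts: a Lipschitz bound, a duality identity from Itô's formula, and a comparison with Theorem \ref{thm: existence mfgc}.

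\textbf{Lipschitz bound.} Fix $\alpha,\alpha'$ and write $\delta X=X^\alpha-X^{\alpha'}$, $\delta U=U^\alpha-U^{\alpha'}$, $\delta\alpha=\alpha-\alpha'$.

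From the forward equation, $\delta X_t=-\int_0^t\delta\alpha_s ds$, so Cauchy--Schwarz gives $\sup_t\|\delta X_t\|^2\le T\|\delta\alpha\|_T^2$.

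For the backward part, $\delta U_t=\espcond{\delta\xi+\int_t^T \delta\ell_s ds}{\mathcal{F}_t}$. Here $\delta\xi=\nabla_xg(X^\alpha_T,\mathcal{L}(X^\alpha_T))-\nabla_xg(X^{\alpha'}_T,\mathcal{L}(X^{\alpha'}_T))$, and $\delta\ell_s$ is the analogous difference of $\nabla_xL$. The conditional expectation is an $L^2$ contraction. Using $\mathcal{W}_2(\mathcal{L}(X),\mathcal{L}(Y))\le\|X-Y\|$, the Lipschitz assumptions of Hypothesis \ref{hyp: lipschitz regularity mfgc} give $\|\delta U\|_T\le C(T)\|\delta\alpha\|_T$.

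The term $\nabla_\alpha L(X^\alpha_t,\alpha_t,\mathcal{L}(X^\alpha_t,\alpha_t))$ is treated the same way. Its Lipschitz continuity in $(x,m)$ is uniform in $a$ by hypothesis. I also need Lipschitz continuity in $a$ itself. The statement as written does not list it, so I will read the standing $C^{1,1}$ assumption of Lemma \ref{lemma: first order optimality conditions} as providing it, and note that it enters the constant $C_v$.

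\textbf{Duality identity.} For monotonicity and the characterization, I first prove that for all $\alpha,\alpha'$
\[
\langle \nabla_x g(X^\alpha_T,\mathcal{L}(X^\alpha_T)),X^{\alpha'}_T-X^\alpha_T\rangle+\int_0^T\langle \nabla_x\tilde L(X^\alpha_t,\alpha_t),X^{\alpha'}_t-X^\alpha_t\rangle dt=-\int_0^T\langle U^\alpha_t,\alpha'_t-\alpha_t\rangle dt .
\]
To get it, apply Itô's formula to $U^\alpha_t\cdot(X^{\alpha'}_t-X^\alpha_t)$. The difference $X^{\alpha'}-X^\alpha$ has no martingale part, since the Brownian terms cancel, and it starts at $0$. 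Taking expectations kills the $Z$ stochastic integral, because $Z\in\mathcal{H}^T$ and $X$ is square integrable, and the identity follows.

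Adding $\int_0^T\langle\nabla_\alpha\tilde L(X^\alpha_t,\alpha_t),\alpha'_t-\alpha_t\rangle dt$ to both sides shows that the left-hand side of \eqref{caracterisation of control mfg} equals $\langle v(\alpha),\alpha'-\alpha\rangle_T$.

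\textbf{Monotonicity.} Write the identity for $(\alpha,\alpha')$ and for $(\alpha',\alpha)$, then subtract. This gives
\[
\langle v(\alpha)-v(\alpha'),\alpha-\alpha'\rangle_T = \langle \delta(\nabla_x g),\delta X_T\rangle+\int_0^T\Big\langle \delta\begin{pmatrix}\nabla_x\tilde L\\ \nabla_\alpha\tilde L\end{pmatrix},\begin{pmatrix}\delta X_t\\ \delta\alpha_t\end{pmatrix}\Big\rangle dt ,
\]
where $\delta$ again denotes the difference between the $\alpha$ and $\alpha'$ versions. By Hypothesis \ref{disp monotone mfg}, applied for each $t$ and then integrated, the right-hand side is at least $c_L\|\delta\alpha\|_T^2$. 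This is $c_L$-strong monotonicity.

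\textbf{Characterization.} By the reformulation above, the variational inequality $\langle v(\alpha^*),\alpha-\alpha^*\rangle\ge0$ for all $\alpha$ is exactly \eqref{caracterisation of control mfg}. Theorem \ref{thm: existence mfgc} states that \eqref{caracterisation of control mfg} characterizes the unique MFG solution, so the equivalence follows.

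\textbf{Main obstacle.} I expect the delicate step to be the duality identity. One must justify that the stochastic integral term is a true martingale with zero expectation, which follows from square integrability. One must also keep the signs straight, given the convention $dX=-\alpha\,dt$.
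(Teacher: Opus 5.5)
Your proof is correct and follows the paper's argument: the paper likewise identifies $\langle v(\alpha),\alpha'-\alpha\rangle^T$ with the left-hand side of \eqref{caracterisation of control mfg} using the backward equation for $U^\alpha$, and then obtains $c_L$-strong monotonicity from Hypothesis \ref{disp monotone mfg} and the characterization from Theorem \ref{thm: existence mfgc}. The only difference is the Lipschitz bound, where the paper applies It\^o's formula to $|U^\alpha_t-U^{\alpha'}_t|^2$ and closes with Gronwall, while your conditional-expectation estimate is more direct (legitimate since the driver depends on neither $U$ nor $Z$); you are also right that $C_v$ must involve the Lipschitz constant of $\nabla_\alpha L$ in all its arguments, including $a$, which the statement and the paper's proof leave out.
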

\begin{proof}
Let us first prove the statement on the Lipschitz regularity of $v$. It is quite evident that for any two controls $(\alpha,\alpha')\in \left(\mathcal{H}^T\right)^{2d}$,
\[\int_0^T \|X^\alpha_t-X^{\alpha'}_t \|dt\leq \|\alpha-\alpha'\|_T.\]
Then applying Ito's Lemma for fixed $t\in [0,T]$ 
\begin{align*}
\| U^\alpha_T-U^{\alpha'}_T\|^2=&\| U^\alpha_t-U^{\alpha'}_t\|^2+\int_t^T \| Z^\alpha_s-Z^{\alpha'}_s\|^2ds\\
&-\esp{\int_t^T \left(\nabla_x \tilde{L}(X^\alpha_s,\alpha_s)-\nabla_x \tilde{L}(X^{\alpha'}_s,{\alpha'}_s)\right)\cdot \left(U^\alpha_s-U^{\alpha'}_s\right)ds}
\end{align*}
which implies that for any $t\in [0,T]$
\[ \|U^\alpha_t-U^{\alpha'}_t\|^2\leq (\|\nabla_xg\|_{Lip}+T\|\nabla_xL\|_{Lip})\|\alpha-\alpha'\|^2_T+\frac{\|\nabla_xL\|_{Lip}}{4}\int_t^T \| U^\alpha_s-U^{\alpha'}_s\|^2ds,\]
and the conclusion follows from Gronwall lemma. For the second part of the lemma, by definition of $(U^\alpha_t)_{t\in [0,T]}$, 
\begin{gather*}\forall \alpha,\alpha' \in \left(\mathcal{H}^T\right)^d, \\
    \langle v(\alpha),\alpha'-\alpha\rangle^T\\
    =\langle \nabla_x g(X^\alpha_T,\mathcal{L}(X^\alpha_T)),X^{\alpha'}_T-X^\alpha_T\rangle+\int_0^T \langle \left(\begin{array}{c}
         \nabla_x \tilde{L}  \\
         \nabla_\alpha \tilde{L}
    \end{array}\right)(X^\alpha_t,\alpha_t),\left(\begin{array}{c}
         X^{\alpha'}_t-X^\alpha_t  \\
         \alpha'_t-\alpha_t
    \end{array}\right)\rangle dt,
\end{gather*}
and the result follows from Hypothesis \ref{disp monotone mfg} and Theorem \ref{thm: existence mfgc}
\end{proof}
With this, we have proved that the problem of solving for the mean field game equilibrium can be reformulated as solving a variational inequality on the space of controls $\left(\mathcal{H}^T\right)^d$.
For Lipschitz functions $v$ on the space of controls $\left(\mathcal{H}^T\right)^d$, we introduce the standard notation 
\[\|v\|_{Lip}=\sup_{\alpha,\alpha'\in \left(\mathcal{H}^T\right)^d}\frac{\|v(\alpha)-v(\alpha')\|_T}{\|\alpha-\alpha'\|_T}.\]
Taking inspiration from the dual extrapolation algorithm introduced in \cite{extra-gradient}, we present an algorithm that converges to the mean field game optimal control starting from any initial condition
\begin{thm}
\label{thm: convergence mfgc}
Take an initial condition $\alpha_1\in \left(\mathcal{H}^T\right)^{d}$, for $n\geq 1$ we introduce the sequences 
\[\left\{\begin{array}{c}
     \alpha_{n+\frac{1}{2}}=\alpha_n-\gamma v(\alpha_n),  \\
     \alpha_{n+1}=\alpha_n-\gamma v(\alpha_{n+\frac{1}{2}}).
\end{array}\right. \]
If 
\[\gamma\leq \frac{1}{\|v\|_{Lip}},\]
then under Hypotheses \ref{hyp: lipschitz regularity mfgc} and \ref{disp monotone mfg}, letting 
\[\bar{\alpha}_n=\frac{1}{n}\sum_{i=1}^n \alpha_{i+\frac{1}{2}},\]
and $\alpha^*$ be the unique control solution of the MFG of control \eqref{mfgc}, the following holds 
\[\forall n\geq 1,\quad \|\alpha^*-\bar{\alpha}_n\|_T^2\leq \frac{\| \alpha^*-\alpha_1\|^2_T}{2\gamma c_L n}.\]
\end{thm}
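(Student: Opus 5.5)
We apply Lemma \ref{GEG inequality} in the Hilbert space $\mathcal{K}=\left(\mathcal{H}^T\right)^d$ with constant step $\gamma_n\equiv\gamma$, $V_n=\gamma v(\alpha_n)$ and $V_{n+\frac12}=\gamma v(\alpha_{n+\frac12})$. With the normalisation $X_n=\alpha_n$ and $\gamma_n=1$ in the lemma, the recursion of the theorem is exactly the dual extrapolation scheme of the lemma. Equivalently, since $\gamma$ is constant, we can rederive the inequality directly from the two polarization identities
\[2\langle \alpha_n-\alpha_{n+1},\alpha_{n+\frac12}-x\rangle=\|\alpha_n-x\|_T^2-\|\alpha_{n+1}-x\|_T^2+\|\alpha_{n+1}-\alpha_{n+\frac12}\|_T^2-\|\alpha_n-\alpha_{n+\frac12}\|_T^2 .\]
In either form, the constant step size makes the term $\left(\frac{1}{\gamma_{n+1}}-\frac1{\gamma_1}\right)\|x\|^2$ vanish. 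We expect the outcome, for every $x\in\left(\mathcal{H}^T\right)^d$,
\[2\gamma\sum_{i=1}^n\langle v(\alpha_{i+\frac12}),\alpha_{i+\frac12}-x\rangle_T\le \|x-\alpha_1\|_T^2+\sum_{i=1}^n\Big(\gamma^2\|v(\alpha_{i+\frac12})-v(\alpha_i)\|_T^2-\|\alpha_{i+\frac12}-\alpha_i\|_T^2\Big).\]

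Next we use Lemma \ref{lemma: def v mfgc}: $v$ is Lipschitz with constant $\|v\|_{Lip}$. Since $\gamma\|v\|_{Lip}\le1$, we get
\[\gamma^2\|v(\alpha_{i+\frac12})-v(\alpha_i)\|_T^2\le\|\alpha_{i+\frac12}-\alpha_i\|_T^2,\]
so the last sum is nonpositive. We then take $x=\alpha^*$. By Lemma \ref{lemma: def v mfgc}, $v$ is $c_L$-strongly monotone, and $\alpha^*$ solves the variational inequality, which on a whole Hilbert space means $v(\alpha^*)=0$. Hence
\[\langle v(\alpha_{i+\frac12}),\alpha_{i+\frac12}-\alpha^*\rangle_T=\langle v(\alpha_{i+\frac12})-v(\alpha^*),\alpha_{i+\frac12}-\alpha^*\rangle_T\ge c_L\|\alpha_{i+\frac12}-\alpha^*\|_T^2 .\]

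Summing and using convexity of $\|\cdot-\alpha^*\|_T^2$ (Jensen on the average $\bar\alpha_n$) gives
\[2\gamma c_L n\|\bar\alpha_n-\alpha^*\|_T^2\le 2\gamma c_L\sum_{i=1}^n\|\alpha_{i+\frac12}-\alpha^*\|_T^2\le\|\alpha^*-\alpha_1\|_T^2,\]
which is the claim.

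The main obstacle is bookkeeping in the first step. We must check that the dual-averaging recursion of Lemma \ref{GEG inequality}, with constant $\gamma$, coincides with the stated extragradient recursion. This requires $Y_{n+1}=Y_n-V_{n+\frac12}$ and $X_{n+1}=\gamma Y_{n+1}$ to give $\alpha_{n+1}=\alpha_n-\gamma v(\alpha_{n+\frac12})$, with the factors of $\gamma$ placed consistently; the lemma as printed also has index typos inside its sum. For safety we would prove the displayed inequality directly. Write $\alpha_{n+1}-x=(\alpha_n-x)-\gamma v(\alpha_{n+\frac12})$ and expand the square. Then use the identity $\gamma v(\alpha_n)=\alpha_n-\alpha_{n+\frac12}$ together with Young's inequality in the form
\[2\langle a-b,c\rangle=\|a-b\|^2+\|c\|^2-\|a-b-c\|^2 .\]
This is the standard extragradient one-step estimate and avoids reliance on the cited lemma's exact normalization.
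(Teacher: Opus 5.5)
Your proposal is correct and is essentially the paper's proof. You apply Lemma~\ref{GEG inequality} with a constant step, where you make explicit that $\gamma\le 1/\|v\|_{Lip}$ makes the error sum nonpositive while the paper uses this silently, and then combine the $c_L$-strong monotonicity of $v$ at $\alpha^*$ with Jensen's inequality on $\bar{\alpha}_n$. The only cosmetic difference is that the paper keeps the term $n\langle v(\alpha),\bar{\alpha}_n-\alpha\rangle^T$ for a general $\alpha$ and discards it via the variational inequality at $\alpha=\alpha^*$, whereas you use $v(\alpha^*)=0$ directly, which is equivalent on the whole space $\left(\mathcal{H}^T\right)^d$.
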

\begin{proof}
Let us first observe by an application of Lemma \ref{GEG inequality},for any control $\alpha \in \left(\mathcal{H}^T\right)^d$, 
\[\forall n\geq 1,\quad \sum_{i=1}^n \langle v(\alpha_{i+\frac{1}{2}}),\alpha_{i+\frac{1}{2}}-\alpha \rangle^T\leq \frac{\|\alpha-\alpha_1\|^2_T}{2\gamma}.\]
On the other hand, introducing the notation 
\[\bar{\alpha}_n=\frac{1}{n}\sum_{i=1}^n \alpha_{i+\frac{1}{2}},\]
\begin{align*}
\sum_{i=1}^n \langle v(\alpha_{i+\frac{1}{2}}),\alpha_{i+\frac{1}{2}}-\alpha \rangle^T&= \sum_{i=1}^n \langle v(\alpha_{i+\frac{1}{2}})-v(\alpha),\alpha_{i+\frac{1}{2}}-\alpha\rangle^T+\langle v(\alpha),\alpha_{i+\frac{1}{2}}-\alpha \rangle^T\\
&\geq n\langle v(\alpha),\bar{\alpha}_n-\alpha\rangle^T+c_L\sum_{i=1}^n \|\alpha-\alpha_{i+\frac{1}{2}}\|^2_T \\
&\geq n\langle v(\alpha),\bar{\alpha}_n-\alpha\rangle^T+n c_L\| \bar{\alpha}_n-\alpha\|^2_T,
\end{align*}
where we used the strong monotonicity of $v$ on $\left(\mathcal{H}^T\right)^d $.
Putting together those two inequalities and evaluating for $\alpha=\alpha^*$ solution of the mean field game problem yields 
\[\forall n\geq 1, \quad \|\alpha^*-\bar{\alpha}_n\|_T^2\leq \frac{\| \alpha^*-\alpha_1\|_T^2}{2\gamma c_L n}.\]
\end{proof}
\begin{remarque}
A corollary of this proof is that 
\[\sum_{n=1}^{+\infty} \|\alpha^*-\alpha_{n+\frac{1}{2}}\|^2_T<+\infty,\]
which implies that eventually, the sequence of last iterates $(\alpha_{n+\frac{1}{2}})_{n\in \mathbb{N}}$ converges to $\alpha^*$ with
\[\forall C>0, \quad \exists n_0, \forall n\geq n_0,\quad \| \alpha_{n+\frac{1}{2}}-\alpha^*\|_T^2\leq \frac{C}{n},\]
but this does not give an explicit rate, hence the choice of working rather on $(\bar{\alpha}_n)_{n\in \mathbb{N}}$
\end{remarque}
The above iterative procedure can be generalized to other monotonicity conditions, as we will see later. In the specific setting of strong monotonicity in the control,  we can show that for asufficiently small step size, the convergence of the last iterate is exponential 
\begin{thm}
    \label{lemma: exponential convergence}
Under the same assumptions as Theorem \ref{thm: convergence mfgc} and with the same notations, if 
\[\gamma< \min \left(\frac{1}{2\|v\|_{Lip}},\frac{c_L}{\|v\|_{Lip}^2}\right),\]
then there exists a constant $\lambda\in (0,1)$ depending only on $\gamma, \|v\|_{Lip}$ and $c_L$ such that 
\[\forall n\in \mathbb{N}, \quad \|\alpha_n-\alpha^*\|_T\leq \lambda^n \| \alpha_0-\alpha^*\|_T.\]
\end{thm}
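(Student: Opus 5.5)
We run a direct one-step contraction argument for the extragradient iteration, using only that $v$ is $c_L$-strongly monotone and $\|v\|_{Lip}$-Lipschitz on $\left(\mathcal{H}^T\right)^d$ (Lemma \ref{lemma: def v mfgc}) and that $v(\alpha^*)=0$ (Theorem \ref{thm: existence mfgc}). Write $L=\|v\|_{Lip}$ and $\|\cdot\|=\|\cdot\|_T$. Since $\alpha^*$ is a fixed point of both half-steps, it suffices to show $\|\alpha_{n+1}-\alpha^*\|^2\leq \lambda^2\|\alpha_n-\alpha^*\|^2$ for some $\lambda\in(0,1)$ depending only on $\gamma,L,c_L$. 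Iterating this then gives the claim, with the index of the initial condition shifted to $\alpha_0$ or $\alpha_1$ as in the statement.

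\textbf{Step 1: the extragradient identity.} Expand
\begin{align*}
\|\alpha_{n+1}-\alpha^*\|^2=&\|\alpha_n-\alpha^*\|^2-2\gamma\langle v(\alpha_{n+\frac12}),\alpha_{n+\frac12}-\alpha^*\rangle^T\\
&-2\gamma\langle v(\alpha_{n+\frac12}),\alpha_n-\alpha_{n+\frac12}\rangle^T+\gamma^2\|v(\alpha_{n+\frac12})\|^2.
\end{align*}
Substitute $\alpha_n-\alpha_{n+\frac12}=\gamma v(\alpha_n)$ and complete the square:
\[\|\alpha_{n+1}-\alpha^*\|^2=\|\alpha_n-\alpha^*\|^2-2\gamma\langle v(\alpha_{n+\frac12}),\alpha_{n+\frac12}-\alpha^*\rangle^T+\gamma^2\|v(\alpha_{n+\frac12})-v(\alpha_n)\|^2-\|\alpha_n-\alpha_{n+\frac12}\|^2.\]
This is the single-step version of Lemma \ref{GEG inequality}.

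\textbf{Step 2: use strong monotonicity and the Lipschitz bound.} Since $v(\alpha^*)=0$, strong monotonicity gives $\langle v(\alpha_{n+\frac12}),\alpha_{n+\frac12}-\alpha^*\rangle^T\geq c_L\|\alpha_{n+\frac12}-\alpha^*\|^2$. The Lipschitz bound gives $\gamma^2\|v(\alpha_{n+\frac12})-v(\alpha_n)\|^2\leq \gamma^2L^2\|\alpha_{n+\frac12}-\alpha_n\|^2$. Together,
\[\|\alpha_{n+1}-\alpha^*\|^2\leq \|\alpha_n-\alpha^*\|^2-2\gamma c_L\|\alpha_{n+\frac12}-\alpha^*\|^2-(1-\gamma^2L^2)\|\alpha_n-\alpha_{n+\frac12}\|^2.\]

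\textbf{Step 3: transfer the gain back to $\alpha_n$.} This is the main obstacle, because the negative terms involve $\alpha_{n+\frac12}$ rather than $\alpha_n$. I will use the elementary inequality $\|\alpha_n-\alpha^*\|^2\leq 2\|\alpha_{n+\frac12}-\alpha^*\|^2+2\|\alpha_n-\alpha_{n+\frac12}\|^2$. The condition $\gamma<1/(2L)$ gives $1-\gamma^2L^2\geq 3/4$. Let $\kappa=\min(2\gamma c_L,\,1-\gamma^2L^2)$. Then the two negative terms together are at most $-\frac{\kappa}{2}\|\alpha_n-\alpha^*\|^2$, so
\[\|\alpha_{n+1}-\alpha^*\|^2\leq \Big(1-\tfrac{\kappa}{2}\Big)\|\alpha_n-\alpha^*\|^2.\]
Here $\lambda^2=1-\kappa/2$, which lies in $(0,1)$. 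Indeed $\kappa>0$, and $\kappa\leq 1$ because $1-\gamma^2L^2\leq 1$; also $c_L\leq L$, so $2\gamma c_L<1$.

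An alternative for Step 3 is to bound $\|\alpha_{n+\frac12}-\alpha^*\|\geq (1-\gamma L)\|\alpha_n-\alpha^*\|$, using $\alpha_{n+\frac12}-\alpha^*=(\alpha_n-\alpha^*)-\gamma(v(\alpha_n)-v(\alpha^*))$. The hypothesis $\gamma<c_L/L^2$ suggests the author instead expands $\|\alpha_{n+\frac12}-\alpha^*\|^2\geq(1-2\gamma c_L+\gamma^2L^2)\|\alpha_n-\alpha^*\|^2$-type bounds, or compares with the explicit gradient step, which contracts exactly when $\gamma<2c_L/L^2$. I would present the splitting argument above, which appears to need only $\gamma<1/(2L)$, and check that the stated bound on $\gamma$ covers it.
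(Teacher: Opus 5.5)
Your proof is correct, and it takes a different route from the paper's. The paper expands $\|\alpha_{i+1}-\alpha^*\|_T^2$ directly and applies strong monotonicity at $\alpha_{i+\frac12}$. It then expands $\|\alpha_{i+\frac12}-\alpha^*\|_T^2$ \emph{exactly} around $\alpha_i$, which produces two cross terms: $4\gamma^2c_L\langle\alpha_i-\alpha^*,v(\alpha_i)\rangle^T$ and $2\gamma^3\|v\|_{Lip}\|v(\alpha_i)\|_T\|v(\alpha_{i+\frac12})\|_T$. These are absorbed by the Lipschitz bound and Young's inequality, and this is where both step-size conditions enter:
\begin{itemize}
\item $\gamma<1/(2\|v\|_{Lip})$ makes the factor $1-2\gamma c_L+4\gamma^2c_L\|v\|_{Lip}=1-2\gamma c_L(1-2\gamma\|v\|_{Lip})$ smaller than $1$;
\item $\gamma\le c_L/\|v\|_{Lip}^2$ makes the coefficient $\gamma^2(\gamma\|v\|_{Lip}^2/c_L-1)$ of $\|v(\alpha_{i+\frac12})\|_T^2$ nonpositive.
\end{itemize}

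Your extragradient identity packages all the $v$-terms into $\gamma^2\|v(\alpha_{n+\frac12})-v(\alpha_n)\|^2-\|\alpha_n-\alpha_{n+\frac12}\|^2$. With your $L=\|v\|_{Lip}$, this is nonpositive as soon as $\gamma\le 1/L$, so the condition $\gamma<c_L/L^2$ is never used. In fact your Step 3 only needs $\gamma<1/L$: you need positivity of $1-\gamma^2L^2$, not the bound $3/4$.

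The price of your main splitting argument is in the constant. It gives $\lambda^2=1-\frac12\min(2\gamma c_L,1-\gamma^2L^2)$, typically $1-\gamma c_L$. For small $\gamma$ that is half the paper's exponent, although unlike the paper's factor it does not degenerate to $1$ as $\gamma\to 1/(2L)$. Your ``alternative'' Step 3 is the better one to present. Dropping the nonpositive term and using $\|\alpha_{n+\frac12}-\alpha^*\|\geq(1-\gamma L)\|\alpha_n-\alpha^*\|$ gives $\lambda^2=1-2\gamma c_L(1-\gamma L)^2$. Since $(1-\gamma L)^2\geq 1-2\gamma L$, this is at least as good as the paper's factor for every admissible $\gamma$, and it holds under the weaker assumption $\gamma<1/L$.

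One correction to your closing speculation. The bound $\|\alpha_{n+\frac12}-\alpha^*\|^2\geq(1-2\gamma c_L+\gamma^2L^2)\|\alpha_n-\alpha^*\|^2$ has the inequality reversed. Strong monotonicity and the Lipschitz bound give that expression as an \emph{upper} bound; it is the contraction factor of the explicit gradient step. So it cannot be used to transfer the gain back to $\alpha_n$. The paper does not compare with the gradient step either; it expands $\|\alpha_{i+\frac12}-\alpha^*\|_T^2$ exactly, as described above. This remark plays no role in your actual argument, so it does not affect correctness.
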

\begin{proof}
In the constant step regime let us first remark that the sequence $(\alpha_i)_{i\in \mathbb{N}}\subset \left(\mathcal{H}^T\right)^d$ is defined through
\[\alpha_{i+1}=\alpha_i-\gamma v(\alpha_i-\gamma v(\alpha_i)).\]
Letting $\alpha^*$ be the control associated to \eqref{FBSDE}, it follows that 
\begin{align*}
    \|\alpha_{i+1}-\alpha^*\|_T^2&=\|\alpha_i-\alpha^*-\gamma v(\alpha_i-\gamma v(\alpha_i))\|_T^2\\
    &=\|\alpha_i-\alpha^*\|_T^2+\gamma^2 \|v(\alpha_i-\gamma v(\alpha_i))\|_T^2-2\gamma \langle \alpha_i-\alpha^*, v(\alpha_i-\gamma v(\alpha_i)\rangle^T.
\end{align*}
Using the fact that 
\[ v(\alpha^*)=0,\]
we get 
\begin{align*}\|\alpha_{i+1}-\alpha^*\|_T^2=&\|\alpha_i-\alpha^*\|_T^2+\gamma^2 \|v(\alpha_i-\gamma v(\alpha_i))\|_T^2\\&-2\gamma \langle \alpha_i-\alpha^*, v(\alpha_i-\gamma v(\alpha_i))-v(\alpha^*)\rangle^T.\end{align*}
Using the strong monotonicity of $v$, 
\begin{align*}\|\alpha_{i+1}-\alpha^*\|_T^2&\leq \|\alpha_i-\alpha^*\|_T^2+\gamma^2 \|v(\alpha_i-\gamma v(\alpha_i))\|_T^2-2\gamma c_L \|\alpha_i-\gamma v(\alpha_i)-\alpha^*\|_T^2\\
    & -2\gamma^2 \langle v(\alpha_i), v(\alpha_i-\gamma v(\alpha_i))-v(\alpha^*)\rangle^T\\
\end{align*}
Which implies that 
\begin{align*}
\|\alpha_{i+1}-\alpha^*\|_T^2\leq&(1-2\gamma c_L) \|\alpha_i-\alpha^*\|_T^2 -2\gamma^2 \langle v(\alpha_i)-v(\alpha_i-\gamma v(\alpha_i)), v(\alpha_i-\gamma v(\alpha_i))\rangle^T\\
& +4\gamma^2c_L \langle \alpha_i-\alpha^*,v(\alpha_i)\rangle^T-\gamma^2 \|v(\alpha_i-\gamma v(\alpha_i))\|_T^2-2\gamma^3 c_L \| v(\alpha_i)\|_T^2.
\end{align*}
Since $v$ is Lipschitz clearly
\begin{align*}\|\alpha_{i+1}-\alpha^*\|_T^2\leq&(1-2\gamma c_L+4\gamma^2 c_L\|v\|_{Lip})  \|\alpha_i-\alpha^*\|_T^2-\gamma^2 \|v(\alpha_i-\gamma v(\alpha_i))\|_T^2\\
    &-2\gamma^3 c_L \| v(\alpha_i)\|_T^2+2\gamma^3\|v\|_{Lip}\|v(\alpha_i)\|_T\|v(\alpha_i-\gamma v(\alpha_i))\|_T,\end{align*}
and we finally obtain 
\begin{align*}\|\alpha_{i+1}-\alpha^*\|_T^2\leq&(1-2\gamma c_L+4\gamma^2 c_L\|v\|_{Lip})  \|\alpha_i-\alpha^*\|_T^2\\&+\gamma^2\left(\gamma \frac{\|v\|_{Lip}^2}{c_L}-1\right) \|v(\alpha_i-\gamma v(\alpha_i))\|_T^2.\end{align*}
In particular, letting 
\[\gamma^*=\min \left(\frac{1}{2\|v\|_{Lip}},\frac{c_L}{\|v\|_{Lip}^2}\right),\]
for any $\gamma<\gamma^*$ the sequence $(\alpha_i)_{i\in \mathbb{N}}$ converges exponentially at the following rate 
\[\forall n\in \mathbb{N} \quad \|\alpha_n-\alpha^*\|^2_T \leq (1-2\gamma c_L+4\gamma^2c_L\|v\|_{Lip}^2 )^n\|\alpha_0-\alpha^*\|_T^2.\]
\end{proof}
\begin{remarque}
    \label{remarque: exponential convergence only when strong monotonicity in alpha}
   It is well known that another regime of wellposedness for the mean field game problem \ref{mfgc} is a strong monotonicity condition on the forward variable $X$. Namely, if coefficients are Lipschitz, $\nabla_\alpha \tilde{L}$ has a well defined, Lipschitz inverse and the following monotonicity condition is satisfied
\begin{gather*}
\exists c_L>0, \quad \forall (X,Y,\alpha,\alpha')\in \mathcal{H}^{4d}, \quad \\
\langle \nabla_x g(X,\mathcal{L}(X))-\nabla_x g(Y,\mathcal{L}(Y)),X-Y\rangle \geq c_L \|X-Y\|^2,\\
\langle \left(\begin{array}{c}
\nabla_x L\\
\nabla_\alpha L
\end{array}\right)
(X,\alpha,\mathcal{L}(X,\alpha))-\left(\begin{array}{c}
\nabla_x L\\
\nabla_\alpha L
\end{array}\right)(Y,\alpha',\mathcal{L}(Y,\alpha')),\left(\begin{array}{c}
     X-Y  \\
     \alpha-\alpha'
\end{array}\right)\rangle \geq c_L\|X-Y\|^2
\end{gather*}
then there exists a unique strong solution to \eqref{MFG FBSDE}(see \cite{common-noise-in-MFG} for example). Although most results presented in this article extend easily to this setting (in particular the convergence of $(\bar{\alpha}_n)_{n\in \mathbb{N}}$),  it is unclear to us whether this Theorem on the exponential convergence of last iterates remains true. This is why we presented first a more general result on the convergence of averaged controls $(\bar{\alpha}_n)_{n\in \mathbb{N}}$. 
\end{remarque}
\subsubsection{Link with the fictitious play}
The algorithm we present is clearly different from the fictitious play algorithm in mean field game. The most obvious difference being that we consider dynamic on controls rather than on the distribution of players, but the learning procedure is also quite different. 
Nevertheless, there is a strong link between the two algorithms, namely the fictitious play can also be interpreted as solving a variational inequality. 

In this section letting $(\mu_t)_{t\in[0,T]}\subset \mathcal{P}_2(\reels^d)$ we consider the cost
\[J(\alpha,\mu_t)_{t\in[0,T]})=\left\{\begin{array}{c}
     \displaystyle \esp{g(X^\alpha_T,\mu_T)+\int_0^T \mathbf{L}(X^\alpha_t,\alpha_t,\mu_t)dt}  \\
     \displaystyle X^\alpha_t=X_0-\int_0^t\alpha_tdt+\sqrt{2\sigma} B_t\quad 
\end{array}\right.\]
with 
\[\mathbf{L}(x,\alpha,\mu)=L(x,\alpha)+f(x,\mu),\]
and the mean field game problem of finding a flow of measure $(\mu^*)_{t\in [0,T]}$ such that 
\[\left\{ \begin{array}{c}
    \alpha^*=\underset{\alpha}{\text{arginf}} J(\alpha,(\mu^*)_{t\in [0,T]}),\\
    \forall t\in [0,T], \quad \mu^*_t=\mathcal{L}(X^{\alpha^*}_t).
\end{array}\right.\]
Let
\[P_a=\left\{ (m_t)_{t\in [0,T]}\subset \mathcal{P}_2(\reels^{2d}), \quad \exists \alpha \in \left(\mathcal{H}^T\right)^d, m=\left(\mathcal{L}(X^\alpha_t,\alpha_t)\right)_{t\in [0,T]}\right\},\]
the set of all possible distributions. Indicating the duality product between $f\in C_b(\reels^d)$ and $\mu\in \mathcal{P}(\reels^d)$ by
\[\left( f,\mu\right)=\int_{\reels^d} f(x)\mu(dx),\]
we introduce the following duality product between flow of measures $(m_t)_{t\in [0,T]}\in C([0,T],\mathcal{P}_2(\reels^{2d}))$ and functions $(f,g)\in C([0,T],C(\reels^2,\reels))\times C(\reels^{2d},\reels)$
\[((f,g),(m_t)_{t\in [0,T]})_T= (g,m_T)+\int_0^T (f,m_t)dt.\]
For a given  $(m_t)_{t\in [0,T]}\in P_a$ with marginal $(\mu_t)_{t\in [0,T]}$ over the first $d$ variables we define
\[v(m)=\left((t,x,\alpha)\mapsto \mathbf{L}(x,\alpha,\mu_t),x\mapsto g(x,\mu_T)\right).\]
By definition of $P_a$, it follows that for any $m'\in P_a$ there exists a control $\alpha^{m'}\in \left(\mathcal{H}^T\right)^d$ such that
\[\forall t\in [0,T]\quad  m_t=\mathcal{L}(X^{\alpha^{m'}}_t,\alpha^{m'}_t),\]
moreover the following holds
\[(v(m),m')_T=J((\alpha^{m'}_t)_{t\in [0,T]},(\mu_t)_{t\in [0,T]}).\]
In particular solving the mean field game problem is equivalent to finding $m^*\in P_a$ such that 
\[\forall m\in P_a,\quad ( v(m^*),m-m^*)_T\geq 0,\]
Moreover let us observe that for any $m,n\in P_a$, with marginal over the first $d$ variables being respectively $\mu^m,\mu^n$ 
\begin{gather*}( v(m)-v(n),m-n)_T\\=(g(\cdot,\mu^m_T)-g(\cdot,\mu^n_T),\mu^m_T-\mu^n_T)+\int_0^T(f(\cdot,\mu^m_t)-f(\cdot,\mu^n_t),\mu^m_t-\mu^n_t) dt.\end{gather*}
Consequently, if both $f$ and $g$ are flat monotone then $v$ is a monotone functional from $P_a$ into $C([0,T],C(\reels^2,\reels))\times C(\reels^{2d},\reels)$ with the duality product $(\cdot,\cdot)_T$. 
\subsection{Beyond mean field games: monotone FBSDE}
In the case of mean field game, there is a canonical monotone functional arising from the optimal control problem behind the game. We are now interested in extending the algorithm presented to general monotone FBSDEs. 
We consider the following system 
\begin{equation}
\label{FBSDE}
\left\{
\begin{array}{l}
     \displaystyle X_t=X_0-\int_0^t  F(X_s,U_s,\mathcal{L}(X_s,U_s))ds+\sqrt{2\sigma}B_t, \\
    \displaystyle  U_t= g(X_T,\mathcal{L}(X_T))+\int_t^T G(X_s,U_s,\mathcal{L}\left(X_s,U_s\right))ds-\int_t^T Z_sdB_s.
\end{array}
\right.
\end{equation}
Clearly, the forward backward system associated to the mean field game of control falls into a particular case of \eqref{FBSDE}.
We place ourselves under the following assumptions 
\begin{hyp}
\label{hyp: lipschitz fbsde}
$F,G:\reels^{2d}\times \mathcal{P}_2(\reels^{2d})\to \reels^d$ are Lipschitz and $g:\reels^d\times \mathcal{P}_2(\reels^d)\to \reels^d$ is Lipschitz
\end{hyp}
We also assume that coefficients are $L^2-$monotone,
\begin{hyp}
\label{hyp: L2 monotonicity}
\begin{gather*}
\exists c_F>0, \quad \forall (X,Y,U,V)\in \mathcal{H}^{4d}, \quad \\
\langle  g(X,\mathcal{L}(X))-g(Y,\mathcal{L}(Y)),X-Y\rangle \geq 0,\\
\langle \left(\begin{array}{c}
F\\
G
\end{array}\right)
(X,Y,\mathcal{L}(X,Y))-\left(\begin{array}{c}
F\\
G
\end{array}\right)(Y,V,\mathcal{L}(Y,V)),\left(\begin{array}{c}
     X-Y  \\
     U-V
\end{array}\right)\rangle \geq c_F\|U-V\|^2
\end{gather*}
\end{hyp}
Obviously, this assumption corresponds to the displacement monotonicity assumption on the Lagrangian $L$ in the case of MFGs. It has already been observed numerous times that the wellposedness of this system under $L^2-$monotonicity does not depend on whether or not the coefficients $F,G$ are gradients. First in Lions' Lectures with the Hilbertian approach to master equations \cite{Lions-college,lipschitz-sol}, then in \cite{bensoussan} with a purely probabilistic approach, and more recently in \cite{monotone-sol-meynard} for weak solutions. 
\begin{thm}
    \label{thm: existence fbsde}
    Under Hypotheses \ref{hyp: lipschitz fbsde} and \ref{hyp: L2 monotonicity}, for any admissible initial condition $X_0\in \mathcal{H}^d$ there exists a unique strong solution $(X_t,U_t,Z_t)_{t\in [0,T]}$ to the FBSDE \eqref{FBSDE}.
\end{thm}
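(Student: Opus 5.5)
Uniqueness will come from monotonicity, and existence from the classical continuation method of Peng--Wu/Bensoussan et al., adapted to the Hilbertian (lifted) setting. The route is the same one announced for Lemma \ref{lemma: invertibility and wellposedness under disp monotone}, namely Lemma \ref{lemma: wellposedness fbsde strong monotonicity on F} in the Appendix.

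\emph{Uniqueness.} Let $(X,U,Z)$ and $(X',U',Z')$ be two solutions with the same $X_0$. Apply Itô's formula to $(X_t-X'_t)\cdot(U_t-U'_t)$ and take expectations. The martingale terms vanish since all processes are square integrable. This gives
\[
\langle g(X_T,\mathcal{L}(X_T))-g(X'_T,\mathcal{L}(X'_T)),X_T-X'_T\rangle+\int_0^T\Big\langle \begin{pmatrix}\tilde F\\ \tilde G\end{pmatrix}(X_s,U_s)-\begin{pmatrix}\tilde F\\ \tilde G\end{pmatrix}(X'_s,U'_s),\begin{pmatrix}X_s-X'_s\\ U_s-U'_s\end{pmatrix}\Big\rangle ds=0 ,
\]
up to the sign convention fixed by the minus signs in \eqref{FBSDE}. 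The point is that the Brownian part cancels in the difference of the forward equations, so $X-X'$ has finite variation. By Hypothesis \ref{hyp: L2 monotonicity} the left side is at least $c_F\int_0^T\|U_s-U'_s\|^2ds$, hence $U=U'$ a.e. Then $X-X'$ solves $\frac{d}{dt}(X-X')=-(\tilde F(X,U)-\tilde F(X',U))$ with zero initial value, and since $F$ is Lipschitz, Gronwall gives $X=X'$. Finally $Z=Z'$ follows from uniqueness of the martingale representation.

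\emph{Existence by continuation.} For $\lambda\in[0,1]$ consider the family of systems with coefficients
\[
F^\lambda=\lambda F+(1-\lambda)c_F\,u,\qquad G^\lambda=\lambda G,\qquad g^\lambda=\lambda g,
\]
together with arbitrary additive perturbations $(\phi_s,\psi_s,\xi)\in \mathcal{H}^T\times\mathcal{H}^T\times\mathcal{H}$ in the drift, the driver and the terminal condition. Each $(F^\lambda,G^\lambda,g^\lambda)$ satisfies Hypothesis \ref{hyp: L2 monotonicity} with the same constant $c_F$ and has uniformly bounded Lipschitz constants.

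At $\lambda=0$ the system decouples: the backward equation $U_t=\xi+\int_t^T\psi_s ds-\int_t^T Z_s dB_s$ is solved first, and then the forward equation with drift $c_FU+\phi$ is explicit. The key step is an a priori estimate, uniform in $\lambda$: if $(X,U,Z)$ and $(X',U',Z')$ solve the $\lambda$-system with perturbations $(\phi,\psi,\xi)$ and $(\phi',\psi',\xi')$, then
\[
\sup_t\|X_t-X'_t\|^2+\int_0^T\|U_s-U'_s\|^2ds\le C\big(\|\phi-\phi'\|_T^2+\|\psi-\psi'\|_T^2+\|\xi-\xi'\|^2\big).
\]
To prove it, repeat the Itô computation from the uniqueness step, which now contains cross terms with the perturbations. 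Monotonicity controls $c_F\int\|U-U'\|^2$, and the remaining terms are absorbed by Young's inequality. The forward difference is then bounded by Gronwall using the Lipschitz property of $F$, and the backward difference is bounded by the standard BSDE estimate, as in the proof of Lemma \ref{lemma: def v mfgc}.

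The continuation step then runs as follows. Suppose solvability holds at $\lambda_0$ for all perturbations. For $\lambda=\lambda_0+\delta$, the $\lambda$-system is the $\lambda_0$-system with perturbation $\delta(F-c_F u)$ in the drift, $\delta G$ in the driver and $\delta g$ in the terminal condition, each evaluated at a given input process. This defines a map on the input processes, and by the a priori estimate it is a contraction in $\mathcal{H}^T\times\mathcal{H}^T$ once $\delta\le\delta_0$, where $\delta_0$ depends only on $C$ and the Lipschitz constants and not on $\lambda_0$. Finitely many steps therefore reach $\lambda=1$.

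I expect the main obstacle to be this a priori estimate, because the monotonicity in Hypothesis \ref{hyp: L2 monotonicity} is strong only in $U$. The terminal term involving $\xi-\xi'$ paired with $X_T-X'_T$ must be closed using the forward Gronwall bound, which itself depends on $\int\|U-U'\|^2$. One therefore has to estimate $X$ in terms of $U$ first and then substitute into the monotonicity inequality, with carefully chosen Young weights. The law dependence needs no extra care, since all estimates are carried out on the lifts in $\mathcal{H}$ and the Lipschitz bounds hold there with respect to $\mathcal{W}_2\le\|\cdot\|$.
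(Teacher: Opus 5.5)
Your uniqueness argument is the paper's: Itô's formula for $(U_t-U'_t)\cdot(X_t-X'_t)$, monotonicity of $g$ and of $(F,G)$ to get $\int_0^T\|U_t-U'_t\|^2dt=0$, then Gronwall for $X$. You also close the loop on $Z$, which the paper leaves implicit.

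For existence the routes genuinely differ. The paper does not argue in-line. It invokes \cite{monotone-sol-meynard} (Lemma 3.21 with Lemma 3.9), i.e.\ a Lipschitz decoupling field for the $L^2$-monotone system, from which strong solutions are obtained. The Appendix proof of Lemma \ref{lemma: wellposedness fbsde strong monotonicity on F} has the same structure: an a priori estimate, then a decoupling field. So it is not a continuation argument, contrary to your opening remark.

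Your continuation scheme, in the spirit of the probabilistic approach the paper attributes to \cite{bensoussan}, is sound.
\begin{itemize}
\item The homotopy $F^\lambda=\lambda F+(1-\lambda)c_F u$ keeps $c_F$-strong monotonicity in $U$. This requires reading Hypothesis \ref{hyp: L2 monotonicity} with $\tilde F$ paired against $U-U'$ and $\tilde G$ against $X-X'$. That is what Itô's formula produces and how the paper uses the hypothesis later. With the ordering as typeset, the added term $(1-\lambda)c_F\langle U-U',X-X'\rangle$ would have no sign.
\item The $\lambda=0$ system decouples, as you say.
\item The uniform estimate closes as you outline. Writing $\Delta$ for differences, Gronwall gives $\sup_t\|\Delta X_t\|\le K(\|\Delta U\|_T+\|\Delta\phi\|_T)$. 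Young's inequality then absorbs the $\Delta\xi$, $\Delta\psi$ and $\Delta\phi$ cross terms into $\frac{c_F}{2}\|\Delta U\|_T^2$.
\end{itemize}

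Your route gives a self-contained proof that uses only Lipschitz BSDE solvability. Its key estimate is essentially the stability bound of Remark \ref{remarque: strong stability of fbsde}, so that remark would become rigorous as a by-product. The paper's route additionally yields a Lipschitz decoupling field $U(t,x,\mu)$, which is used later: to propagate moments of order $8(d+1)$ in the proof of Theorem \ref{thm: approximation fbsde}, and for Lemma \ref{lemma: existence with common noise}. Continuation gives existence for each fixed $X_0$, but no decoupling field without further work.

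One technical point needs fixing. The contraction cannot be set on $\mathcal{H}^T\times\mathcal{H}^T$. The terminal perturbation $\delta\, g(x_T,\mathcal{L}(x_T))$ needs the terminal value of the input process, and the $L^2(dt\otimes d\mathbb{P})$ norm does not control it. Instead:
\begin{itemize}
\item take $x$ in the space of continuous adapted processes with norm $\esp{\sup_{t}|x_t|^2}^{1/2}$, and $u\in\mathcal{H}^T$;
\item since $X-X'$ has finite variation, your a priori estimate also holds with $\esp{\sup_t|X_t-X'_t|^2}$ on the left.
\end{itemize}
The map then has Lipschitz constant $O(\delta)$, uniformly in $\lambda_0$, and the argument goes through.
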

\begin{proof}
The existence of a strong solution is a direct consequence of \cite{monotone-sol-meynard} Lemma 3.21 combined with Lemma 3.9. As for uniqueness it is a natural consequence of the strong monotonicity of coefficients. Let us fix an initial condition $X\in \mathcal{H}^d$ an assume that there exist two solutions $(X^i_t,U^i_t,Z^i_t)_{t\in [0,T]}$ for $i=1,2$ starting from $X_0^i=X$. Defining
\[\forall t\in [0,T], \quad V_t=(U^1_t-U^2_t)\cdot (X^1_t-X^2_t), \]
using the monotocity of $(F,G),g$ and applying Ito's Lemma to $(V_t)_{t\in [0,T]}$ we get
\[\forall t\in [0,T], \quad 0\leq \esp{V_T}\leq \esp{V_t}\leq \esp{V_0}=0.\]
Then using this time the strong monotonicity of $(F,G)$ in $U$ it follows that 
\[\esp{\int_0^T|U^1_t-U^2_t|^2dt}=0,\]
and by a direct application of Gronwall's lemma we also have
\[\forall t\in [0,T],\quad \|X^1_t-X^2_t\|=0,\]
hence the uniqueness of a strong solution.
\end{proof}
A natural question at this point is whether we can associate a monotone variational inequality to this system as we did in the case of mean field games \ref{mfgc}. To that end, we introduce the following parametrization 
\[\forall \alpha \in \left(\mathcal{H}^T\right)^d, \quad X^\alpha_t=X_0-\int_0^t \alpha_s ds+\sqrt{2\sigma}B_t.\]
In the case of MFGs, this comes from the formulation of the control problem. Here, it can be seen as a linearization of the FBSDE \eqref{FBSDE}. In particular, $(X^{\alpha^*}_t)_{t\in [0,T]}\overset{\mathcal{H}^T}{=}(X_t)_{t\in [0,T]}$ if and only if
\begin{equation}
\label{eq: relationship between control and u fbsde}
\esp{\int_0^T|\alpha^*_t-F(X_t,U_t,\mathcal{L}(X_t,U_t))|^2dt}=0.
\end{equation} 
Let us now remark the following
\begin{lemma}
    \label{lemma: property v fbsde}
 under Hypotheses \ref{hyp: lipschitz fbsde} and \ref{hyp: L2 monotonicity}, the Hilbertian lift of $F$, $\tilde{F}$ 
 \[U\mapsto \tilde{F}(X,U),\]
 has a well defined inverse in $U\in \mathcal{H}^d$ for any given $X\in \mathcal{H}^d$. Moreover, denoting this inverse by $\tilde{F}_u^{-1}$, there exists a constant $C$ depending only on $\|\tilde{F}\|_{Lip}$ and $c_F$ such that
\[ \forall (X,Y,\alpha,\alpha')\in \mathcal{H}^{4d},\quad \| \tilde{F}^{-1}_u (X,\alpha)-\tilde{F}^{-1}_u (Y,\alpha')\|\leq C\left(\|X-Y\|+\|\alpha-\alpha'\|\right).\]
\end{lemma}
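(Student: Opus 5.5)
The plan is to show that, for fixed $X\in\mathcal{H}^d$, the map $U\mapsto \tilde{F}(X,U)$ is strongly monotone and Lipschitz on $\mathcal{H}^d$. Invertibility then follows from the classical theory of strongly monotone operators on Hilbert spaces (Browder--Minty, or the contraction argument recalled in the primer on variational inequalities). The Lipschitz bound on the inverse will come from the same perturbed monotonicity inequality already used in the proof of Lemma \ref{lemma: invertibility and wellposedness under disp monotone}.

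\textbf{Step 1 (strong monotonicity in $U$).} Take $X=Y$ in Hypothesis \ref{hyp: L2 monotonicity}. The $G$-component is paired with $X-Y=0$, so it drops out, and we obtain
\[\langle \tilde{F}(X,U)-\tilde{F}(X,V),U-V\rangle\geq c_F\|U-V\|^2 .\]
Here I am reading the hypothesis with $F$ paired against $U-V$ and $G$ against $X-Y$, which is the convention of the MFG case, where $F=\nabla_\alpha \tilde L^{-1}$ and the monotonicity is obtained after the change of variables. If the pairing is instead $F$ against $X-Y$, the argument has to be adapted. This bookkeeping of which component pairs with which variable is the main point to check carefully.

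By Hypothesis \ref{hyp: lipschitz fbsde}, $\tilde{F}$ is Lipschitz on $\mathcal{H}^{2d}$, since $\mathcal{W}_2(\mathcal{L}(X,U),\mathcal{L}(Y,V))\le \|(X,U)-(Y,V)\|$. Surjectivity then follows from the standard argument. For $\alpha\in\mathcal{H}^d$, the map
\[U\mapsto U-\tau(\tilde F(X,U)-\alpha)\]
is a contraction for $\tau< 2c_F/\|\tilde F\|_{Lip}^2$, because its squared Lipschitz constant is at most $1-2\tau c_F+\tau^2\|\tilde F\|^2_{Lip}$. Its unique fixed point is $\tilde{F}^{-1}_u(X,\alpha)$. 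Injectivity is immediate from strong monotonicity.

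\textbf{Step 2 (Lipschitz estimate).} Set $U=\tilde F^{-1}_u(X,\alpha)$ and $V=\tilde F^{-1}_u(Y,\alpha')$. Write
\[\tilde F(X,U)-\tilde F(Y,V)=\bigl(\tilde F(X,U)-\tilde F(X,V)\bigr)+\bigl(\tilde F(X,V)-\tilde F(Y,V)\bigr).\]
Pair this identity with $U-V$ and use Step 1, the Lipschitz bound $\|\tilde F(X,V)-\tilde F(Y,V)\|\le \|\tilde F\|_{Lip}\|X-Y\|$, and Cauchy--Schwarz. This gives
\[c_F\|U-V\|^2\le \|\alpha-\alpha'\|\,\|U-V\|+\|\tilde F\|_{Lip}\|X-Y\|\,\|U-V\|.\]
Dividing by $\|U-V\|$ yields
\[\|U-V\|\le c_F^{-1}\bigl(\|\alpha-\alpha'\|+\|\tilde F\|_{Lip}\|X-Y\|\bigr),\]
so one can take $C=\max(1,\|\tilde F\|_{Lip})/c_F$. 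This depends only on $\|\tilde F\|_{Lip}$ and $c_F$, as claimed.
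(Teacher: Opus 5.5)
Your proof is correct and follows the paper's route. The paper invokes strong monotonicity of $U\mapsto\tilde F(X,U)$ for invertibility, and for the Lipschitz bound it points back to the perturbed monotonicity inequality in the proof of Lemma \ref{lemma: invertibility and wellposedness under disp monotone}; your Steps 1 and 2 make exactly this explicit, and your Cauchy--Schwarz-and-divide step is equivalent to the paper's Young-inequality form. Your reading of Hypothesis \ref{hyp: L2 monotonicity}, with $F$ paired against $U-V$ and $G$ against $X-Y$, is the one the paper itself writes out in the proof of the following lemma on $l(\alpha)$, so you can drop that caveat.
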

\begin{proof}
    Both the invertibility and the regularity of the inverse follows naturally from the strong monotonicity of $\tilde{F}$ on $\mathcal{H}^d$. This is a straighforward adaptation of Lemma \ref{lemma: invertibility and wellposedness under disp monotone}.
\end{proof}
Since it has now been established that $\tilde{F}$ is invertible, \eqref{eq: relationship between control and u fbsde} implies that a control $\alpha$ solves the FBSDE \eqref{FBSDE} if and only if 
\[(U_t)_{t\in [0,T]}=(\tilde{F}^{-1}_u(X^\alpha_t,\alpha_t))_{t\in [0,T]}.\]
In light of this new representation, we introduce the following linear form
\begin{lemma}
For a given $\alpha \in \left(\mathcal{H}^T\right)^d$, let  $l(\alpha)$ be the bilinear form defined by 
\begin{gather*}\forall \alpha',\alpha''\in \left(\mathcal{H}^T\right)^d,\quad l(\alpha)(\alpha',\alpha'')=\\
\langle g(X^\alpha_T),X^{\alpha'}_T-X^{\alpha''}_T\rangle\\+\int_0^T\langle  \tilde{F}^{-1}_u(X^\alpha_t,\alpha_t),\alpha'_t-\alpha''_t\rangle+\langle \tilde{G}(X^\alpha_t,\tilde{F}^{-1}_u(X^\alpha_t,\alpha_t)),X^{\alpha'}_t-X^{\alpha''}_t\rangle dt.
\end{gather*}
Under Hypotheses \ref{hyp: L2 monotonicity} and \ref{hyp: lipschitz fbsde}
\begin{enumerate} 
    \item[-]$\forall (\alpha,\alpha')\in \mathcal{H}^T$ \[(l(\alpha)-l(\alpha'))(\alpha,\alpha')\geq c_F\int_0^T\|\tilde{F}^{-1}_u (X^\alpha_t,\alpha_t)-\tilde{F}^{-1}_u (X^{\alpha'}_t,\alpha'_t)\|^2dt\]
    \item[-] Let 
    \[\forall t\in [0,T] \quad  \alpha^*_t= \tilde{F}(X_t,U_t),\] 
for $(X_t,U_t,Z_t)_{t\in [0,T]}$ the unique solution to \eqref{FBSDE}, then for any $(\alpha',\alpha'')\in (\mathcal{H}^T)^{2d}$
\[l(\alpha^*)(\alpha',\alpha'')=0.\]
\end{enumerate}
\end{lemma}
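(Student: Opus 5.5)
The plan is to prove both items by direct computation, exploiting the representation $\alpha_t=\tilde F(X^\alpha_t,U^\alpha_t)$ for the first and an Itô product rule for the second. Throughout, for $\alpha\in(\mathcal{H}^T)^d$ we write $U^\alpha_t:=\tilde F^{-1}_u(X^\alpha_t,\alpha_t)$. This is well defined and square integrable by Lemma \ref{lemma: property v fbsde}, and by definition of the inverse it satisfies $\alpha_t=\tilde F(X^\alpha_t,U^\alpha_t)$.

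\textbf{First item.} Since $l(\alpha)(\alpha',\alpha'')$ is linear in $\alpha'-\alpha''$, evaluating $(l(\alpha)-l(\alpha'))(\alpha,\alpha')$ gives
\[\langle g(X^\alpha_T)-g(X^{\alpha'}_T),X^\alpha_T-X^{\alpha'}_T\rangle+\int_0^T \langle U^\alpha_t-U^{\alpha'}_t,\alpha_t-\alpha'_t\rangle+\langle \tilde G(X^\alpha_t,U^\alpha_t)-\tilde G(X^{\alpha'}_t,U^{\alpha'}_t),X^\alpha_t-X^{\alpha'}_t\rangle dt.\]
We then substitute $\alpha_t-\alpha'_t=\tilde F(X^\alpha_t,U^\alpha_t)-\tilde F(X^{\alpha'}_t,U^{\alpha'}_t)$. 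After this substitution, the integrand at each time $t$ is exactly the monotonicity pairing of $(F,G)$ in Hypothesis \ref{hyp: L2 monotonicity}, evaluated at $(X^\alpha_t,U^\alpha_t)$ and $(X^{\alpha'}_t,U^{\alpha'}_t)$. Here $F$ is paired with the $U$-increment and $G$ with the $X$-increment, which is the pairing used in the Itô argument of Theorem \ref{thm: existence fbsde}; I read the hypothesis in that sense. The integrand is therefore at least $c_F\|U^\alpha_t-U^{\alpha'}_t\|^2$. The terminal term is nonnegative by the monotonicity of $g$. Integrating in $t$ gives the claimed bound.

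\textbf{Second item.} Let $(X,U,Z)$ be the solution of \eqref{FBSDE} and set $\alpha^*_t=\tilde F(X_t,U_t)$. Then $X^{\alpha^*}=X$, so $\tilde F^{-1}_u(X^{\alpha^*}_t,\alpha^*_t)=U_t$ by uniqueness of the inverse in Lemma \ref{lemma: property v fbsde}. Now set $\delta=\alpha'-\alpha''$ and $Y_t=\int_0^t\delta_sds$, so that $X^{\alpha'}_t-X^{\alpha''}_t=-Y_t$ (the initial condition and the noise cancel). With this notation,
\[l(\alpha^*)(\alpha',\alpha'')=-\esp{U_T\cdot Y_T}+\int_0^T \esp{U_t\cdot\delta_t}-\esp{\tilde G(X_t,U_t)\cdot Y_t}\,dt,\]
where we used $U_T=g(X_T,\mathcal{L}(X_T))$. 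Since $dU_t=-\tilde G(X_t,U_t)dt+Z_tdB_t$ and $Y$ has finite variation, the product rule gives
\[U_T\cdot Y_T=\int_0^T \left(U_t\cdot\delta_t-\tilde G(X_t,U_t)\cdot Y_t\right)dt+\int_0^T Y_t\cdot Z_tdB_t.\]
Taking expectations makes the right-hand side of $l(\alpha^*)(\alpha',\alpha'')$ vanish, provided the stochastic integral has zero mean.

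\textbf{Main obstacle.} The only delicate point is justifying that $\esp{\int_0^T Y_t\cdot Z_tdB_t}=0$. I would handle it as follows. First, $\sup_t|Y_t|\le \sqrt{T}\left(\int_0^T|\delta_s|^2ds\right)^{1/2}$ is in $L^2$, and $\esp{\int_0^T|Z_t|^2dt}<\infty$. By Burkholder--Davis--Gundy and Cauchy--Schwarz, $\esp{\left(\int_0^T|Y_t|^2|Z_t|^2dt\right)^{1/2}}\le \|\sup_t|Y_t|\|\,\|Z\|_T<\infty$, so the local martingale is a true martingale. If needed, one can instead localize with stopping times and pass to the limit by dominated convergence, using $\esp{\sup_t|U_t|^2}<\infty$.
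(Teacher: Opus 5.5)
Your proof is correct and follows the paper's route. For the first item, the paper likewise substitutes $U=\tilde F^{-1}_u(X^\alpha_t,\alpha_t)$ and $V=\tilde F^{-1}_u(X^{\alpha'}_t,\alpha'_t)$ into the $L^2$-monotonicity inequality, with the same $F$--$(U-V)$, $G$--$(X-Y)$ pairing you adopt, and adds the nonnegative terminal term. For the second item, the paper merely notes $X^{\alpha^*}=X$, and your It\^o product-rule computation, together with the BDG argument that $\int_0^T Y_t\cdot Z_t\,dB_t$ is a true martingale, is exactly the step it leaves implicit (your reuse of $U^\alpha$ for $\tilde F^{-1}_u(X^\alpha,\alpha)$ clashes only notationally with the paper's later BSDE solution $U^\alpha$).
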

\begin{proof}
This first statement is a direct consequence of the monotonicity of coefficients. Following Hypothesis \ref{hyp: L2 monotonicity}, we already know that for all $(X,Y,U,V)\in \mathcal{H}^{4d}$, 
\[\langle \tilde{F}(X,U)-\tilde{F}(Y,V),U-V\rangle+ \langle \tilde{G}(X,U)-\tilde{G}(Y,V),X-Y\rangle\geq c_F\|U-V\|^2.\]
By choosing $U=\tilde{F}^{-1}_u(X,\alpha), V=\tilde{F}^{-1}_u(Y,\alpha')$ for some $\alpha,\alpha'\in \mathcal{H}^d$, we get 
\begin{gather*}\langle \alpha-\alpha',\tilde{F}^{-1}_u(X,\alpha)-\tilde{F}^{-1}_u(Y,\alpha)\rangle+ \langle \tilde{G}(X,\tilde{F}^{-1}_u(X,\alpha))-\tilde{G}(Y,\tilde{F}^{-1}_u(Y,\alpha)),X-Y\rangle\\
    \geq c_F\|\tilde{F}^{-1}_u(X,\alpha)-\tilde{F}^{-1}_u(Y,\alpha)\|^2,\end{gather*}
and the first statetement follows naturally. 

As for the second statement, it suffices to remark that 
\[\int_0^T\|X^{\alpha^*}_t-X_t\|^2dt=0.\]
\end{proof}
We now have all the tools at hand to present a monotone functional associated to the FBSDE \eqref{FBSDE}
\begin{lemma}
\label{lemma: def v fbsde}
Under Hypotheses \ref{hyp: L2 monotonicity} and \ref{hyp: lipschitz fbsde}, for any $\alpha \in \left(\mathcal{H}^T\right)^d, X_0\in \mathcal{H}^d$ the system 
\begin{equation}
\label{parametrized fbsde}
\left\{\begin{array}{c}
\displaystyle X^\alpha_t=X_0-\int_0^t \alpha_s ds +\sqrt{2\sigma} B_t,\\
\displaystyle U^\alpha_t=\tilde{g}(X^\alpha_T)+\int_t^T \tilde{G}(X^\alpha_s,\tilde{F}^{-1}_u(X^\alpha_s,\alpha_s))ds-\int_t^T Z^\alpha_s dB_s,
\end{array}\right.
\end{equation}
has a unique solution $(X^\alpha_t,U^\alpha_t,Z^\alpha_t)_{t\in [0,T]}$. Moreover, letting
\[v:\FuncDef{\left(\mathcal{H}^T\right)^d}{\left(\mathcal{H}^T\right)^d}{(\alpha_t)_{t\in [0,T]}}{ \left(\tilde{F}^{-1}_u(X^\alpha_t,\alpha_t)-U^\alpha_t\right)_{t\in [0,T]}},\]
\begin{enumerate}
\item[-] there exists a constant $C$ depending only on $T,\|(\tilde{F},\tilde{G})\|_{Lip},\|\tilde{g}\|_{Lip}$ and $c_F$ such that 
\[ \forall (\alpha,\alpha')\in (\mathcal{H}^T)^{2d},\quad \| v(\alpha)-v(\alpha')\|_T\leq C \|\alpha-\alpha'\|_T.\]
\item[-] $\forall (\alpha,\alpha',\alpha'')\in (\mathcal{H}^T)^{3d}$ \[\langle v(\alpha),\alpha'-\alpha''\rangle^T= l(\alpha)(\alpha',\alpha'').\]
\end{enumerate}
\end{lemma}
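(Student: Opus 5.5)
Given $\alpha$, the process $X^\alpha$ is explicit, and by Lemma \ref{lemma: property v fbsde} so is $\tilde{F}^{-1}_u(X^\alpha_t,\alpha_t)$. System \eqref{parametrized fbsde} is therefore a decoupled backward SDE with known driver
\[f(s,\omega)=-\tilde{G}\big(X^\alpha_s,\tilde{F}^{-1}_u(X^\alpha_s,\alpha_s)\big)(\omega),\]
which does not depend on $(U,Z)$, and terminal condition $\tilde{g}(X^\alpha_T)\in\mathcal{H}^d$. The driver is square integrable because $\tilde{G}$ and $\tilde{F}^{-1}_u$ are Lipschitz (Hypothesis \ref{hyp: lipschitz fbsde} and Lemma \ref{lemma: property v fbsde}) and $X^\alpha,\alpha$ are square integrable. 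Existence and uniqueness of $(U^\alpha,Z^\alpha)$ then follow from \cite{zhang2017backward} Theorem 4.3.1, exactly as in the mean field game case. Equivalently, $U^\alpha_t$ is the conditional expectation of $\tilde{g}(X^\alpha_T)+\int_t^T f\,ds$ given $\mathcal{F}_t$, with $Z^\alpha$ obtained from the martingale representation theorem.

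For the Lipschitz bound I mimic the proof of Lemma \ref{lemma: def v mfgc}.

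\textbf{Forward component.} $\|X^\alpha_t-X^{\alpha'}_t\|\leq \int_0^t\|\alpha_s-\alpha'_s\|ds\leq \sqrt{T}\|\alpha-\alpha'\|_T$.

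\textbf{Inverse.} Lemma \ref{lemma: property v fbsde} gives
\[\|\tilde{F}^{-1}_u(X^\alpha_t,\alpha_t)-\tilde{F}^{-1}_u(X^{\alpha'}_t,\alpha'_t)\|\leq C\big(\|X^\alpha_t-X^{\alpha'}_t\|+\|\alpha_t-\alpha'_t\|\big).\]
Squaring and integrating in time bounds the first part of $v$ by $C\|\alpha-\alpha'\|_T$.

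\textbf{Backward component.} Since the drivers do not depend on $U$, it is simplest to take differences of the conditional-expectation representation and use Jensen:
\[\|U^\alpha_t-U^{\alpha'}_t\|^2\leq 2\|\tilde{g}\|_{Lip}^2\|X^\alpha_T-X^{\alpha'}_T\|^2+2T\int_0^T\|\tilde{G}(\cdot)-\tilde{G}(\cdot)\|^2ds.\]
The right-hand side is bounded by $C\|\alpha-\alpha'\|_T^2$ by the two previous steps and the Lipschitz property of $\tilde{G}$. Alternatively, Itô's formula plus Gronwall as in Lemma \ref{lemma: def v mfgc} gives the same bound. Integrating over $[0,T]$ then gives the claim, with $C$ depending only on $T$, the Lipschitz norms and $c_F$ (through the constant of $\tilde{F}^{-1}_u$).

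For the identity, set $\delta X_t=X^{\alpha'}_t-X^{\alpha''}_t$. It satisfies $d\delta X_t=-(\alpha'_t-\alpha''_t)dt$ with $\delta X_0=0$: it has no martingale part, since the Brownian terms cancel. Apply Itô's product rule to $U^\alpha_t\cdot\delta X_t$, where $dU^\alpha_t=-\tilde{G}(X^\alpha_t,\tilde{F}^{-1}_u(X^\alpha_t,\alpha_t))dt+Z^\alpha_tdB_t$. Because $\delta X$ has finite variation, no quadratic covariation term appears. Taking expectations kills the stochastic integral, since $Z^\alpha$ is square integrable and $\delta X$ is bounded in $L^2$, so it is a true martingale. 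This gives
\[\langle \tilde{g}(X^\alpha_T),\delta X_T\rangle=-\int_0^T\langle \tilde{G}(X^\alpha_t,\tilde{F}^{-1}_u(X^\alpha_t,\alpha_t)),\delta X_t\rangle dt-\int_0^T\langle U^\alpha_t,\alpha'_t-\alpha''_t\rangle dt.\]
Substituting into the definition of $l(\alpha)(\alpha',\alpha'')$, the terminal and $\tilde{G}$ terms cancel. What remains is
\[\int_0^T\langle \tilde{F}^{-1}_u(X^\alpha_t,\alpha_t)-U^\alpha_t,\alpha'_t-\alpha''_t\rangle dt=\langle v(\alpha),\alpha'-\alpha''\rangle^T.\]

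The main obstacle is this last step. One must get the sign conventions right: the forward equation has drift $-\alpha$, and the backward equation is written with $+\int_t^T\tilde{G}$. One must also justify that the stochastic integral has zero expectation. Beyond these, everything is a routine adaptation of Lemma \ref{lemma: def v mfgc}.
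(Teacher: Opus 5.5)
Your proposal is correct and follows essentially the same route as the paper. Well-posedness comes from \cite{zhang2017backward} Theorem 4.3.1 applied to the decoupled BSDE. The Lipschitz bound combines the forward estimate, the Lipschitz inverse of Lemma \ref{lemma: property v fbsde}, and a backward estimate. Your direct conditional-expectation bound legitimately replaces the paper's Gronwall step, since the driver does not depend on $U$. The identity then follows from It\^o's product rule.

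The one step to tighten is the martingale claim. What you actually need is $\mathbb{E}[\sup_{t}|\delta X_t|^2]\leq T\|\alpha'-\alpha''\|_T^2$, which holds because $\delta X_t=-\int_0^t(\alpha'_s-\alpha''_s)ds$. Together with $\mathbb{E}\int_0^T|Z^\alpha_s|^2ds<\infty$ and the Burkholder--Davis--Gundy inequality, this makes $\int_0^{\cdot}\delta X_s\cdot Z^\alpha_s\,dB_s$ a true martingale.
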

\begin{proof}
Since the system \eqref{parametrized fbsde} is a decoupled forward backward system with Lipschitz coefficients the wellposedness follows once again from \cite{zhang2017backward} Theorem 4.3.1. Then, the first statement follows from an application of Gronwall's lemma. The second statement is just a consequence of the definition of $(U^\alpha_t)_{t\in [0,T]}$ for a given $\alpha \in \left(\mathcal{H}^T\right)^d$ and Ito's lemma. 
\end{proof}
We may now state the corresponding theorem to Theorem \ref{thm: convergence mfgc} in the case of FBSDEs 
\begin{thm}
    \label{thm: convergence fbsde}
Let v be defined as in Lemma \ref{lemma: def v fbsde}, for a given initial condition $\alpha^1\in \left(\mathcal{H}^T\right)^{d}$ and $n\geq 1$ we introduce the sequences
\[\left\{\begin{array}{c}
     \alpha^{n+\frac{1}{2}}=\alpha^n-\gamma v(\alpha^n),  \\
     \alpha^{n+1}=\alpha^n-\gamma v(\alpha^{n+\frac{1}{2}}),
\end{array}\right. \]
If 
\[\gamma\leq \frac{1}{\|v\|_{Lip}},\]
then under Hypotheses \ref{hyp: L2 monotonicity}, and \ref{hyp: lipschitz fbsde}, letting 
\[\forall t\in [0,T],\quad \bar{U}^n_t=\frac{1}{n}\sum_{i=1}^n \tilde{F}^{-1}_u(X^{\alpha^{i+\frac{1}{2}}}_t,\alpha^{i+\frac{1}{2}}_t),\]
the following holds 
\begin{enumerate}
    \item[-] \[\forall n\geq 1,\quad \int_0^T \|U_t-\bar{U}^n_t\|^2dt\leq \frac{1}{2\gamma c_L n}\int_0^T\|\tilde{F}(X_t,U_t)-\alpha^1_t\|^2dt.\]
    \item[-] Letting  \[\forall t\in [0,T],\quad \bar{X}^n_t=X_0-\int_0^t \tilde{F}(\bar{X}^n_s,\bar{U}^n_s)ds+\sqrt{2\sigma} B_t,\] there exists a constant $C$ depending only on $T$ and $\|\tilde{F}\|_{Lip}$ such that 
    \[\int_0^T\left\|\bar{X}^n_t-X_t\right\|^2dt\leq \frac{C}{2\gamma c_L n}\int_0^T\left\|\tilde{F}(X_t,U_t)-\alpha^1_t\right\|^2dt.\] 
\end{enumerate}
\end{thm}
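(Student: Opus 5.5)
We follow the proof of Theorem \ref{thm: convergence mfgc} closely; the only new feature is that strong monotonicity now acts on $\tilde{F}^{-1}_u(X^\alpha,\alpha)$ rather than on $\alpha$.

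\textbf{Step 1: the extragradient inequality.} Apply Lemma \ref{GEG inequality} in $\mathcal{K}=\left(\mathcal{H}^T\right)^d$ with constant step $\gamma_n\equiv\gamma$, $V_n=\gamma v(\alpha^n)$ and $V_{n+\frac12}=\gamma v(\alpha^{n+\frac12})$. Since $\gamma\le 1/\|v\|_{Lip}$, we have $\gamma^2\|v(\alpha^{i+\frac12})-v(\alpha^i)\|_T^2\le \|\alpha^{i+\frac12}-\alpha^i\|_T^2$, so the error terms are nonpositive. This gives, for every $\alpha\in(\mathcal{H}^T)^d$,
\[\sum_{i=1}^n\langle v(\alpha^{i+\frac12}),\alpha^{i+\frac12}-\alpha\rangle^T\le \frac{\|\alpha-\alpha^1\|_T^2}{2\gamma}.\]
We take $\alpha=\alpha^*$, where $\alpha^*_t=\tilde{F}(X_t,U_t)$. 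Then the right-hand side is exactly $\frac{1}{2\gamma}\int_0^T\|\tilde F(X_t,U_t)-\alpha^1_t\|^2dt$.

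\textbf{Step 2: the monotone lower bound.} By Lemma \ref{lemma: def v fbsde}, each term can be written as
\[\langle v(\alpha^{i+\frac12}),\alpha^{i+\frac12}-\alpha^*\rangle^T=l(\alpha^{i+\frac12})(\alpha^{i+\frac12},\alpha^*).\]
By the previous lemma, $l(\alpha^*)(\alpha^{i+\frac12},\alpha^*)=0$. Hence each term equals
\[(l(\alpha^{i+\frac12})-l(\alpha^*))(\alpha^{i+\frac12},\alpha^*)\ge c_F\int_0^T\|\tilde F^{-1}_u(X^{\alpha^{i+\frac12}}_t,\alpha^{i+\frac12}_t)-\tilde F^{-1}_u(X^{\alpha^*}_t,\alpha^*_t)\|^2dt.\]
Two identifications are needed here. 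First, $X^{\alpha^*}=X$ in $\mathcal{H}^T$, because both solve the same forward equation (uniqueness of the linear SDE, or Gronwall with the Lipschitz property of $\tilde F$). Second, $\tilde F^{-1}_u(X_t,\alpha^*_t)=U_t$ by definition of the inverse from Lemma \ref{lemma: property v fbsde}.

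\textbf{Step 3: the rate for $\bar U^n$.} Summing over $i$ and using convexity of $\|\cdot\|^2$ (Jensen on the average defining $\bar U^n_t$), the sum of lower bounds is at least $n c_F\int_0^T\|\bar U^n_t-U_t\|^2dt$. Comparing with Step 1 yields the first bound, with $c_L$ in the statement read as $c_F$.

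\textbf{Step 4: the forward estimate.} Subtract the equations for $\bar X^n$ and $X$:
\[\bar X^n_t-X_t=-\int_0^t\big(\tilde F(\bar X^n_s,\bar U^n_s)-\tilde F(X_s,U_s)\big)ds.\]
Using the Lipschitz property of $\tilde F$ and Cauchy–Schwarz,
\[\|\bar X^n_t-X_t\|^2\le 2T\|\tilde F\|_{Lip}^2\int_0^t\big(\|\bar X^n_s-X_s\|^2+\|\bar U^n_s-U_s\|^2\big)ds.\]
Gronwall then gives $\sup_t\|\bar X^n_t-X_t\|^2\le C\int_0^T\|\bar U^n_s-U_s\|^2ds$. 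Integrating over $[0,T]$ and inserting Step 3 gives the second bound, with $C$ depending only on $T$ and $\|\tilde F\|_{Lip}$. The well-posedness of $\bar X^n$ is a standard Lipschitz McKean–Vlasov SDE with the given input $\bar U^n$.

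The main obstacle is Step 2, specifically the correct identification $X^{\alpha^*}=X$ and $\tilde F^{-1}_u(X,\alpha^*)=U$. This is what makes $l(\alpha^*)$ vanish and lets the monotonicity estimate be expressed in terms of $U$. Step 1 additionally requires checking carefully the scaling conventions of Lemma \ref{GEG inequality} (its $V$ absorbs $\gamma$).
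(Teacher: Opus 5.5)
Your proof is correct and follows the paper's argument: Lemma \ref{GEG inequality} evaluated at $\alpha=\alpha^*$, then strong monotonicity of $l$ together with $l(\alpha^*)=0$ (the same as the paper's split into $\langle v(\alpha^{i+\frac12})-v(\alpha^*),\alpha^{i+\frac12}-\alpha^*\rangle^T$ plus a vanishing term), then Jensen on the average, then Gronwall for $\bar X^n$. One small bookkeeping fix in Step 1: since the lemma's recursion is $X_{n+\frac12}=X_n-\gamma_nV_n$, take either $V_n=v(\alpha^n)$ with $\gamma_n\equiv\gamma$, or $V_n=\gamma v(\alpha^n)$ with $\gamma_n\equiv 1$, but not $V_n=\gamma v(\alpha^n)$ together with $\gamma_n\equiv\gamma$; either consistent choice gives exactly the inequality you wrote.
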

\begin{proof}
    We follow the same idea as in Theorem \ref{thm: convergence mfgc} of using the monotone functional $v$ combined with Lemma \ref{GEG inequality}. Fixing $\alpha\in \left(\mathcal{H}^T\right)^d$ this yields
    \begin{gather*} n\langle v(\alpha),\bar{\alpha}_n-\alpha\rangle^T+c_F\sum_{i=1}^n \int_0^T\left\|\tilde{F}^{-1}_u(X^\alpha_t,\alpha_t)-\tilde{F}^{-1}_u(X^{\alpha^{i+\frac{1}{2}}}_t,\alpha^{i+\frac{1}{2}}_t)\right\|^2dt\\ \leq\sum_{i=1}^n\langle v(\alpha^{i+\frac{1}{2}}),\alpha^{i+\frac{1}{2}}-\alpha\rangle^T\leq \frac{\|\alpha-\alpha^1\|^2_T}{2\gamma},\end{gather*}
    for 
    \[\bar{\alpha}_n=\frac{1}{n}\sum_{i=1}^n \alpha^{i+\frac{1}{2}}.\]
    Evaluating this expression for $\alpha^*=\left(\tilde{F}(X_t,U_t)\right)_{t\in [0,T]}$ where $(X_t,U_t,Z_t)_{t\in [0,T]}$ is the unique strong solution to \eqref{FBSDE} we get 
    \begin{align*}n\int_0^T \left\|U_t-\frac{1}{n}\sum_{i=1}^n\tilde{F}^{-1}_u(X^{\alpha^{i+\frac{1}{2}}}_t,\alpha^{i+\frac{1}{2}}_t)\right\|^2dt&\leq \sum_{i=1}^n \int_0^T \left\|U_t-\tilde{F}^{-1}_u(X^{\alpha^{i+\frac{1}{2}}}_t,\alpha^{i+\frac{1}{2}}_t)\right\|^2dt\\ &\leq \frac{1}{2\gamma c_F}\int_0^T\left\|\tilde{F}(X_t,U_t)-\alpha_t\right\|^2dt\end{align*}
    proving the first claim. The second claim follows naturally from an application of Gronwall Lemma. 
\end{proof}
\begin{remarque}
    \label{remarque: trouble if inverse not known}
    Clearly this procedure requires precise knowledge of $\tilde{F}^{-1}_u$ to lead to a reasonable algorithm. Although this was not a problem in mean field games of control, as no inverse was used in the definition of $v$, it appears that this algorithm is not very suitable to compute the solution of Forward backward systems where the forward driver $F$ depends on the law of the backward process $U$. In this case the Hilbertian inverse is in general not trivial, whereas when $F$ doesn't depend on the law of the backward process, $\tilde{F}^{-1}_u$ is just 
    \[(X,U)\mapsto F^{-1}_u(X,U,\mathcal{L}(X))\]
    where $u\mapsto F^{-1}_u(x,u,\mu)$ indicates the inverse of $u\mapsto F(x,u,\mu)$ in $\reels^d$ for a given pair $(x,\mu)\in \reels^d\times \mathcal{P}_2(\reels^d)$.
\end{remarque}
\begin{remarque}
    Let us observe that, if the following monotonicity condition 
    \begin{gather*}
\exists c_F>0, \quad \forall (X,Y,U,V)\in \mathcal{H}^{4d}, \quad \\
\langle  g(X,\mathcal{L}(X))-g(Y,\mathcal{L}(Y)),X-Y\rangle \geq 0,\\
\langle \left(\begin{array}{c}
F\\
G
\end{array}\right)
(X,Y,\mathcal{L}(X,Y))-\left(\begin{array}{c}
F\\
G
\end{array}\right)(Y,V,\mathcal{L}(Y,V)),\left(\begin{array}{c}
     X-Y  \\
     U-V
\end{array}\right)\rangle\\ \geq c_F\|F(X,U,\mathcal{L}(X,U))-F(Y,V,\mathcal{L}(Y,V))\|^2,
\end{gather*}
is satisfied instead. Then 
\[\forall (\alpha,\alpha')\in \left(\mathcal{H}^T\right)^{2d}, \quad \langle v(\alpha)-v(\alpha'),\alpha-\alpha'\rangle^T\geq c_F\|\alpha-\alpha'\|^2_T,\]
and an analogue to Theorem \ref{lemma: exponential convergence}, on the exponential convergence to the solution of the FBSDE \eqref{FBSDE} follows naturally. the Wellposedness of \eqref{FBSDE} under such monotonicity condition is proven in the appendix \eqref{lemma: wellposedness fbsde strong monotonicity on F}.
In particular, this monotonicity condition is always satisfied whenever coefficients are Lipschitz and the pair $(F,G)$ is strongly monotone in $(X,U)$ 
\end{remarque}
In the case of mean field games, there exists a canonical monotone functional coming from the optimal control representation of the problem. In a sense this corresponds to a potential regime compared to general FBSDEs. In this latter case, it is unclear to us whether the parametrization we introduce \eqref{parametrized fbsde} yields the best results. Although it certainly feels like a natural extension to MFGs, depending on the problem there might exist parametrizations leading to much better convergence rates.

Before ending this section we present a lemma that will prove useful to estimate the convergence rate of a sequence of approximate solutions 
\begin{lemma}
    \label{lemma: error in function of v(alpha)}
   Under Hypotheses \ref{hyp: lipschitz fbsde} and \ref{hyp: L2 monotonicity}, letting $(X_s,U_s,Z_s)_{s\in [0,T]}$ be the unique strong solution to \eqref{FBSDE}, there exists a constant $C$ depending only on $c_F,T$ and the Lipschitz constants of $F,G,g$ such that for any control $\alpha\in \mathcal{H}^T$ 
   \[\forall t\in [0,T], \quad \| U^\alpha_t-U_T\|, \|X^\alpha_t-X_t\|\leq C \|v(\alpha)\|_T.\]
\end{lemma}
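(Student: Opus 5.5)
The idea is to use the monotone functional $v$ of Lemma \ref{lemma: def v fbsde} as an error certificate. I compare the decoupled system \eqref{parametrized fbsde} driven by $\alpha$ with the true solution, driven by $\alpha^*=(\tilde{F}(X_t,U_t))_{t\in[0,T]}$, for which $v(\alpha^*)=0$ and $X^{\alpha^*}=X$, $U^{\alpha^*}=U$. The statement has $U_T$ on the left; I read it as $\|U^\alpha_t-U_t\|$, and the bound at $t=T$ is a special case.

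Set $\hat U_t=\tilde{F}^{-1}_u(X^\alpha_t,\alpha_t)$, $\delta X=X^\alpha-X$ and $\delta U=\hat U-U$. By definition $v(\alpha)=\hat U-U^\alpha$, so $\alpha_t=\tilde F(X^\alpha_t,\hat U_t)$ and
\[
dX^\alpha_t=-\tilde F(X^\alpha_t,\hat U_t)\,dt+\sqrt{2\sigma}\,dB_t,\qquad
dU^\alpha_t=-\tilde G(X^\alpha_t,\hat U_t)\,dt+Z^\alpha_t\,dB_t .
\]
Thus $(X^\alpha,U^\alpha)$ solves the FBSDE \eqref{FBSDE}, except that the coefficients are evaluated at the perturbed backward variable $\hat U=U^\alpha+v(\alpha)$. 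The lemma is therefore a stability estimate for the monotone system with respect to a perturbation of size $\|v(\alpha)\|_T$ in the $U$-argument.

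\textbf{Step 1 (duality).} Apply Itô's lemma to $(U^\alpha_t-U_t)\cdot\delta X_t$ on $[0,T]$, with $\delta X_0=0$. By monotonicity of $g$ the terminal term $\langle \tilde g(X^\alpha_T)-\tilde g(X_T),\delta X_T\rangle$ is nonnegative. Writing $U^\alpha-U=\delta U-v(\alpha)$ gives
\[
0\le -\int_0^T\Big\langle \binom{\tilde F}{\tilde G}(X^\alpha_t,\hat U_t)-\binom{\tilde F}{\tilde G}(X_t,U_t),\binom{\delta X_t}{\delta U_t}\Big\rangle dt+\int_0^T\langle \tilde F(X^\alpha_t,\hat U_t)-\tilde F(X_t,U_t),v(\alpha)_t\rangle dt .
\]
Hypothesis \ref{hyp: L2 monotonicity} then yields
\[
c_F\int_0^T\|\delta U_t\|^2dt\le C\int_0^T\big(\|\delta X_t\|+\|\delta U_t\|\big)\|v(\alpha)_t\|\,dt .
\]

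\textbf{Step 2 (forward Gronwall).} Since $\delta X_t=-\int_0^t(\tilde F(X^\alpha_s,\hat U_s)-\tilde F(X_s,U_s))\,ds$, Lipschitz continuity and Gronwall give $\sup_t\|\delta X_t\|^2\le C\int_0^T\|\delta U_s\|^2ds$. Substituting into Step 1 and using Young's inequality gives $\int_0^T\|\delta U_t\|^2dt\le C\|v(\alpha)\|_T^2$, and hence $\sup_t\|\delta X_t\|\le C\|v(\alpha)\|_T$.

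\textbf{Step 3 (backward estimate).} Apply Itô's lemma to $|U^\alpha_t-U_t|^2$ as in the proof of Lemma \ref{lemma: def v mfgc}. The driver difference $\tilde G(X^\alpha,\hat U)-\tilde G(X,U)$ is controlled by $\|\delta X\|+\|\delta U\|$, which is already bounded in $L^2$. The terminal difference is controlled by $\|\delta X_T\|$. Gronwall then gives $\sup_t\|U^\alpha_t-U_t\|\le C\|v(\alpha)\|_T$.

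\textbf{Main obstacle.} The delicate point is Step 1, because the strong monotonicity is only in $U$. It controls $\delta U$ but not $\delta X$ directly, and the right-hand side contains $\delta X$. The argument closes only because Step 2 bounds $\delta X$ by $\delta U$ through the forward Gronwall, so the two steps must be combined. A second point requiring care is the Itô duality computation: one has to track that the mismatch $v(\alpha)$ enters only linearly, through $U^\alpha-U=\delta U-v(\alpha)$.
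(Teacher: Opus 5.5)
Your proof is correct and follows the same route as the paper. You apply It\^o's formula to the product $(U^\alpha_t-U_t)\cdot(X^\alpha_t-X_t)$, use the nonnegative terminal term and strong monotonicity in the backward variable, then a forward Gronwall bound on the noise-free difference $X^\alpha-X$, and close with a backward estimate for $U^\alpha-U$. Your bookkeeping is cleaner than the paper's: applying monotonicity at $\hat U=U^\alpha+v(\alpha)$ makes the perturbation appear only through $\langle \alpha-\alpha^*,v(\alpha)\rangle^T$, so you can work on all of $[0,T]$ at once. Read the pairing in Hypothesis~\ref{hyp: L2 monotonicity} as $\tilde F$ against $\delta U$ and $\tilde G$ against $\delta X$, which is what your It\^o computation actually produces.
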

\begin{proof}
    By definition of $v(\alpha)$ 
    \[\forall t\in [0,T],\quad \alpha_t=F(X^\alpha_t,U^\alpha_t+v(\alpha)_t,\mathcal{L}(X^\alpha_t,U^\alpha_t+v(\alpha)_t)).\]
    Naturally, this is equivalent to the fact that the pair $(X^\alpha_t,U^\alpha_t)_{t\in [0,T]}$ solves
    \begin{equation}
\label{fbsde error v alpha}
\left\{\begin{array}{l}
\displaystyle X^\alpha_t=X_0-\int_0^tF(X^\alpha_s,U^\alpha_s+v(\alpha)_s,\mathcal{L}(X^\alpha_s,U^\alpha_s+v(\alpha)_s)) ds +\sqrt{2\sigma} B_t,\\
\displaystyle U^\alpha_t=U^\alpha_T+\int_t^T G(X^\alpha_s,U^\alpha_s+v(\alpha)_s,\mathcal{L}(X^\alpha_s,U^\alpha_s+v(\alpha)_s))ds-\int_t^T Z^\alpha_s dB_s,\\
U^\alpha_T=g(X^\alpha_T,\mathcal{L}(X^\alpha_T)).
\end{array}\right.
\end{equation}
Let $(X_t,U_t)_{t\in [0,T]}$ be the unique solution to \eqref{FBSDE} we define 
\[\forall t\in [0,T], \quad V_t=(U_t-U^\alpha_t)\cdot(X_t-X^\alpha_t).\]
By Ito's Lemma, 
\[\forall t\in [0,T], \quad V_T=V_t-\int_t^T \left(\Delta F_s\cdot (U_s-U^\alpha_s)+\Delta G_s\cdot(X_s-X^\alpha_s)\right)ds,\]
with the notation
\[ \forall s\in [0,T], \quad \Delta F_s=F(X_s,U_s,\mathcal{L}(X_s,U_s))-F(X^\alpha_s,U^\alpha_s+v(\alpha)_s,\mathcal{L}(X^\alpha_s,U^\alpha_s+v(\alpha)_s)),\]
and an analogue definition for $(\Delta G_s)_{s\in [0,T]}$. By the monotonicity of coefficients and their Lipschitz regularity, there exists a constant $C_{Lip}$ depending only on the Lipschitz norms of $F,G$ such that 
\begin{equation}
    \label{eq: using the monotonicity of initial condition estimate on v(alpha)}
    \forall t\in [0,T],\quad 0\leq \esp{V_T}\leq \esp{V_t}+\underbrace{C_{Lip}\int_0^T \|v(\alpha)_s\|\left(\|X_s-X^\alpha_s\|+\|U_s-U^\alpha_s\|\right)ds}_{C^\alpha_T}.\end{equation}
Using the strong monotonocity of $(F,G)$ we deduce that for any $t\in [0,T]$
\[c_F\int_0^t\|U_s-U^\alpha_s\|^2ds+\esp{V_t}\leq \underbrace{V_0}_{=0}+C_{Lip}\int_0^t \|v(\alpha)_s\|\left(\|X_s-X^\alpha_s\|+\|U_s-U^\alpha_s\|\right)ds.\]
Using \eqref{eq: using the monotonicity of initial condition estimate on v(alpha)}, it follows that 
\begin{equation}
    \label{eq: u in function of x estimate v(alpha)}
    \forall t\in [0,T],\quad \frac{c_F}{2}\int_0^t\|U_s-U^\alpha_s\|^2ds\leq C_{Lip}^2\left(\frac{2}{c_F}+1\right)\|v(\alpha)\|^2 _T+\int_0^t \|X_s-X^\alpha_s\|^2ds+C^\alpha_T.
\end{equation}
Since 
\[\forall t\in [0,t], \quad \|X^\alpha_t-X_t\|^2\leq C_{Lip}\int_0^t \left(\|X^\alpha_s-X_s\|^2+2\|U^\alpha_s-U_s\|^2+2\|v(\alpha)_s\|^2\right)ds,\]
estimating the term depending on $(U^\alpha_s-U_s)_{s\in [0,t]}$ with \eqref{eq: u in function of x estimate v(alpha)}, we get by an application of Gronwall's lemma that there exists a constant $C$ depending on $c_F,T$ and the Lipschitz constants of $(F,G)$ such that 
\begin{equation}
    \label{gronwall x in u v(alpha)}
    \forall t\in [0,T], \quad \|X^\alpha_t-X_t\|^2\leq C\left(\|v(\alpha)\|^2_T+C^\alpha_T\right).\end{equation}
Integrating from $0$ to $T$ and using Cauchy-schwartz inequality we get 
\[\int_0^T \|X_s-X^\alpha_s\|^2ds\leq \left(\|v(\alpha)\|^2_T(C+C_{Lip}^2C^2)+C\int_0^T\|v(\alpha)_s\|\| U^\alpha_s-U_s\|ds\right).\]
In particular, by plugging back this estimate in \eqref{gronwall x in u v(alpha)}, 
\[\forall t\in [0,T], \quad \|X^\alpha_t-X_t\|^2\leq C'\left(\|v(\alpha)\|^2_T+\int_0^T\|v(\alpha)_s\|\| U^\alpha_s-U_s\|ds\right),\]
for a constant $C'$ with the same dependencies as $C$. Using this estimate, we can estimate $\int_0^T \|U^\alpha_s-U_s\|^2 ds$ in a similar fashion and we conclude to the existence of a constant $C{''}$ depending on $c_F,T$,$\|(F,G,g)\|_{Lip}$ only such that 
\[\forall t\in [0,T], \quad \|X_t-X^\alpha_t\|, \|U_t-U^\alpha_t\|\leq C^{''} \|v(\alpha)\|_T. \]
\end{proof}
\begin{remarque}
    \label{remarque: strong stability of fbsde}
    Let us remark at this point that a slightly more general version of this statement is possible. Indeed this Lemma is exactly a consequence of the strong stability of strongly monotone FBSDE in $L^2$. Considering the system 
    \begin{equation}
        \label{eq: perturbized system}
\left\{
\begin{array}{l}
     \displaystyle X^\eta_t=X_0-\int_0^t  \left(F(X^\eta_s,U^\eta_s,\mathcal{L}(X^\eta_s,U^\eta_s))+\eta^x_s\right)ds+\sqrt{2\sigma}B_t, \\
    \displaystyle  U^\eta_t= g(X^\eta_T,\mathcal{L}(X^\eta_T))+\gamma_T+\int_t^T \left(G(X^\eta_s,U^\eta_s,\mathcal{L}\left(X^\eta_s,U^\eta_s\right))+\eta^u_s\right)ds-\int_t^T Z_sdB_s,
\end{array}
\right.
\end{equation}
for $(\eta^x_t,\eta^u_t)_{t\in [0,T]}\in \left(\mathcal{H}^T\right)^{2d}$ and a $\mathcal{F}_T-$measurable $\gamma_T\in \mathcal{H}^d$. There exists a constant $C_{stab}$ depending on $c_F,T,\|(F,G,g)\|_{Lip}$ only such that if there exist a strong solution to \eqref{eq: perturbized system}, then 
\[\esp{\sup_{[0,T]}|X_t-X^\eta_t|^2+\sup_{[0,T]}|U^\eta_t-U_t|^2}\leq C_{stab}\left(\|\gamma_T\|^2+\int_0^T \|(\eta^x_t,\eta^u_t)\|^2dt\right).\]
This result follow from the exact same argument. Formally, the strong monotonicity of the system allows to decouple the dependencies, yielding stability estimates which are standard for decoupled FBSDE but with worse constants, depending in particular on $\frac{1}{c_F}$. 
\end{remarque}

\begin{subsection}{Decreasing step size}
So far, we have been working with a constant step size $\gamma$. This yields a convergence of the forward backward system in $\left(\mathcal{H}^T\right)^d$ with a rate of $O\left(\frac{1}{\sqrt{n}}\right)$. Altough choosing a decreasing step size leads to a worse converge rate, it allows to weaken some assumption. In particular, the step size can then be chosen independently of the Lipschitz norm of $v$. This is particularly interesting for FBSDEs posed on large time intervals since we expect that $\|v\|_{Lip}$ grows linearly with the horizon $T$. 
\begin{corol}
    Let v be defined as in \ref{lemma: def v fbsde}, for a given initial condition $\alpha^1\in \left(\mathcal{H}^T\right)^{d}$ and $n\geq 1$ we introduce the sequences
\[\left\{\begin{array}{c}
     \alpha^{n+\frac{1}{2}}=\alpha^n-\gamma_n v(\alpha^n),  \\
     Y^{n+1}=Y^n-v(\alpha^{n+\frac{1}{2}}),\\
     \alpha^{n+1}=\gamma_{n+1} Y^{n+1},
\end{array}\right., \]
with 
\[Y^1=\frac{1}{\gamma_1}\alpha^1.\]
If $(\gamma_n)_{n\in \mathbb{N}}\subset \reels^+$ is a sequence of decreasing step such that there exists $C\in \reels^+, \delta \in (0,1)$ such that 
\[\forall n\in \mathbb{N}, \quad \frac{1}{\gamma_n}\leq C_\gamma n^\delta,\]
and 
\[\exists n_0, \quad \gamma_{n_0}\leq \|v\|_{Lip},\]
then under Hypotheses \ref{hyp: L2 monotonicity}, \ref{hyp: lipschitz fbsde}, letting 
\[\bar{U}^n=\frac{1}{n}\sum_{i=1}^n \tilde{F}^{-1}_u(X^{\alpha_{i+\frac{1}{2}}},\alpha_{i+\frac{1}{2}}),\]
and $(X_t,U_t,Z_t)_{t\in [0,T]}$ be the unique strong solution to \eqref{FBSDE}, the following holds
\begin{enumerate}
    \item[-] There exists a constant $C_v$ depending on the coefficients and $(\gamma_n)_{n\in \mathbb{N}}$ such that for any $n\geq 1$ \[\int_0^T\left\|U_t -\bar{U}^n_t\right\|^2dt\leq  \frac{1}{2\gamma_1c_f n }\int_0^T\left\|\tilde{F}(X_t,U_t)-\alpha^1_t\right\|^2dt+\frac{C_\gamma}{2n^{1-\delta}}\int_0^T\|U_t\|^2dt+\frac{C_v}{c_Fn}.\]
    \item[-] Letting  \[\forall t\in [0,T],\quad \bar{X}^n_t=X_0-\int_0^t \tilde{F}(\bar{X}^n_s,\bar{U}^n_s)ds+\sqrt{2\sigma} B_t,\] there exists a constant $C$ depending only on $T$ and $\|\tilde{F}\|_{Lip}$ such that 
    \[\int_0^T\| \bar{X}^n_t-X_t\|^2dt\leq C\int_0^T\left\|U_t -\bar{U}^n_t\right\|^2dt.\] 
\end{enumerate}
\end{corol}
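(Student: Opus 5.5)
We follow the proof of Theorem \ref{thm: convergence fbsde}, now using the full form of Lemma \ref{GEG inequality} with variable steps. We set $X_n=\alpha^n$, $X_{n+\frac12}=\alpha^{n+\frac12}$, $V_n=v(\alpha^n)$ and $V_{n+\frac12}=v(\alpha^{n+\frac12})$ in $\mathcal{K}=\left(\mathcal{H}^T\right)^d$. Then for any $\alpha\in\left(\mathcal{H}^T\right)^d$,
\begin{gather*}
2\sum_{i=1}^n\langle v(\alpha^{i+\frac12}),\alpha^{i+\frac12}-\alpha\rangle^T\leq \frac{\|\alpha-\alpha^1\|_T^2}{\gamma_1}+\left(\frac{1}{\gamma_{n+1}}-\frac{1}{\gamma_1}\right)\|\alpha\|_T^2\\
+\sum_{i=1}^n\left(\gamma_i\|v(\alpha^{i+\frac12})-v(\alpha^i)\|_T^2-\frac{1}{\gamma_i}\|\alpha^{i+\frac12}-\alpha^i\|_T^2\right).
\end{gather*}
The first term gives the $\frac{1}{2\gamma_1 c_F n}$ contribution after dividing by $2c_Fn$. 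For the second term we use $\frac1{\gamma_{n+1}}\le C_\gamma (n+1)^\delta$, which after division by $n$ yields a term of order $\frac{C_\gamma}{n^{1-\delta}}\|\alpha\|_T^2$. Evaluated at $\alpha=\alpha^*=\tilde F(X,U)$, this is the term involving $\int_0^T\|U_t\|^2dt$ in the statement. Here we will have to track constants: strictly, $\|\alpha^*\|_T$ appears, and it is controlled by $\|\tilde F\|_{Lip}$ times norms of $(X,U)$. We would absorb this into constants or normalize, and the statement's exact form may require a slight reinterpretation.

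The key step, and the one we expect to be the main obstacle, is the error sum. By Lipschitz continuity, $\|v(\alpha^{i+\frac12})-v(\alpha^i)\|_T\le\|v\|_{Lip}\|\alpha^{i+\frac12}-\alpha^i\|_T$, so the $i$-th summand is bounded by $\left(\gamma_i\|v\|_{Lip}^2-\frac1{\gamma_i}\right)\|\alpha^{i+\frac12}-\alpha^i\|_T^2$. This is nonpositive once $\gamma_i\le 1/\|v\|_{Lip}$. We read the hypothesis ``$\gamma_{n_0}\le\|v\|_{Lip}$'' as $\gamma_{n_0}\le 1/\|v\|_{Lip}$, which holds eventually since $\gamma_n$ is nonincreasing. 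Hence only the finitely many indices $i<n_0$ contribute. For those we bound
\[
\|\alpha^{i+\frac12}-\alpha^i\|_T=\gamma_i\|v(\alpha^i)\|_T,
\]
and $\|v(\alpha^i)\|_T$ for $i<n_0$ is finite and determined by $\alpha^1$, $(\gamma_i)_{i<n_0}$ and $\|v\|_{Lip}$. This is seen by iterating the recursion, since each step is Lipschitz in the previous iterate. This finite quantity defines $C_v$ and, after division, gives the $\frac{C_v}{c_F n}$ term.

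For the left-hand side we proceed exactly as in Theorem \ref{thm: convergence fbsde}. We write $\langle v(\alpha^{i+\frac12}),\alpha^{i+\frac12}-\alpha^*\rangle^T=l(\alpha^{i+\frac12})(\alpha^{i+\frac12},\alpha^*)$ by Lemma \ref{lemma: def v fbsde}. Since $l(\alpha^*)\equiv0$, the monotonicity property of $l$ gives the lower bound
\[
c_F\int_0^T\|\tilde F^{-1}_u(X^{\alpha^{i+\frac12}}_t,\alpha^{i+\frac12}_t)-U_t\|^2dt.
\]
Summing over $i$ and applying Jensen's (convexity) inequality, we obtain $nc_F\int_0^T\|U_t-\bar U^n_t\|^2dt$ on the left. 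Dividing by $2nc_F$ then gives the first claim. Note that $\tilde F^{-1}_u(X^{\alpha^*},\alpha^*)=U$ because $X^{\alpha^*}=X$.

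For the second claim, we write
\[
\bar X^n_t-X_t=-\int_0^t\left(\tilde F(\bar X^n_s,\bar U^n_s)-\tilde F(X_s,U_s)\right)ds,
\]
since the noise cancels. The Lipschitz bound then gives $\|\bar X^n_t-X_t\|^2\le 2T\|\tilde F\|_{Lip}^2\int_0^t\left(\|\bar X^n_s-X_s\|^2+\|\bar U^n_s-U_s\|^2\right)ds$. Gronwall's lemma followed by integration over $[0,T]$ yields the bound with a constant $C$ depending only on $T$ and $\|\tilde F\|_{Lip}$.
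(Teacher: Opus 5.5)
Your proposal follows the paper's proof step for step: Lemma \ref{GEG inequality} at $x=\alpha^*$, the monotonicity of $l$ with $l(\alpha^*)\equiv0$ and Jensen on the left, $C_v$ coming from the finitely many indices with $\gamma_i>1/\|v\|_{Lip}$, and Gronwall for $\bar X^n$. Your nonnegative per-term bound $\gamma_i^3\|v\|_{Lip}^2\|v(\alpha^i)\|_T^2$ is in fact slightly more careful than the paper: its $C_v$ is the signed partial sum up to $n_0$, which need not dominate $E_n$ for $n<n_0$.

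The discrepancy you flag is real, and the paper's proof shares it. The anchoring term is $(\frac{1}{\gamma_{n+1}}-\frac{1}{\gamma_1})\|\alpha^*\|_T^2$ with $\|\alpha^*\|_T^2=\int_0^T\|\tilde F(X_t,U_t)\|^2dt$. So the first bullet holds with $\frac{C_\gamma(n+1)^\delta}{2c_Fn}\int_0^T\|\tilde F(X_t,U_t)\|^2dt$ in place of $\frac{C_\gamma}{2n^{1-\delta}}\int_0^T\|U_t\|^2dt$, and not as literally written. For instance, take $F(x,u,m)=u+c$ with $c\neq0$, $G=g=0$, $\gamma_n=n^{-1/2}$ and $\alpha^1=\alpha^*=c$. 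Then $U\equiv0$ and the argument's $C_v$ is $0$, so the right-hand side vanishes at $n=2$, whereas $\bar U^2=-\frac{(1-\gamma_2)^2}{2}c\neq0$.
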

\begin{proof}
In spirit the proof is quite similar to that of Theorem \ref{thm: convergence fbsde}: by applying Lemma \ref{GEG inequality} in $\left(\mathcal{H}^T\right)^d$ with 
\[x=\alpha^*=\left(\tilde{F}(X_t,U_t)\right)_{t\in [0,T]},\]
we find that 
     \begin{gather*}c_Fn\int_0^T\left\| U_t-\frac{1}{n}\sum_{i=1}^n\tilde{F}^{-1}_u(X^{\alpha_{i+\frac{1}{2}}}_t,\alpha^{i+\frac{1}{2}}_t)\right\|^2dt \\\leq \frac{1}{2\gamma_1 }\int_0^T \|\tilde{F}(X_t,U_t)-\alpha^1_t\|^2dt+\frac{1}{2\gamma_{n+1}}\int_0^T\|U_t\|^2dt\\
        \quad +\underbrace{\frac{1}{2}\sum_{i=1}^n \left(\gamma_i \|v(\alpha^{i+\frac{1}{2}})-v(\alpha^i)\|^2_T-\frac{1}{\gamma_i}\|\alpha^{i+\frac{1}{2}}-\alpha^i\|^2_T\right)}_{E_n}.
    \end{gather*}
Letting
\[C_v=\frac{1}{2}\sum_{i=1}^{n_0}\left(\gamma_i \|v(\alpha_{i+\frac{1}{2}})-v(\alpha_i)\|^2_T-\frac{1}{\gamma_i}\|\alpha_{i+\frac{1}{2}}-\alpha_i\|^2_T\right),\]
from our assumption on the sequence $(\gamma_n)_{n\in \mathbb{N}}$ 
\[\forall n\in \mathbb{N}, \quad E_n\leq C_v.\]
It follows that 
\begin{gather*}\int_0^T\left\| U_t-\frac{1}{n}\sum_{i=1}^n\tilde{F}^{-1}_u(X^{\alpha^{i+\frac{1}{2}}}_t,\alpha^{i+\frac{1}{2}}_t)\right\|^2dt\\
    \leq \frac{1}{2\gamma_1c_f n }\int_0^T\|\tilde{F}(X_t,U_t)-\alpha^1_t\|^2dt+\frac{C_\gamma}{2n^{1-\delta}}\int_0^T\|U_t\|^2dt+\frac{C_v}{c_F n}.\end{gather*}
The rest of the proof is a straightforward consequence of this result
\end{proof}
\begin{remarque}
    the condition 
\[\exists n_0, \quad \gamma_{n_0}\leq \frac{1}{\|v\|_{Lip}},\]
is always true as soon as the sequence tends to 0. If the operator $v$ is only assumed to be $\eta$ Holder for some $\eta\in (0,1)$, then this result can also be adapted by requiring that
\[\sum_{n=1}^{+\infty} \gamma_n^\frac{1+\eta}{1-\eta}<+\infty,\]
instead. By an application of Young's inequality
\[\sum_{i=1}^n \gamma_i\|\alpha_{i+\frac{1}{2}}-\alpha_i\|^{2\eta}\leq \frac{1}{1-\eta}\sum_{i=1}^n \gamma_i^\frac{1+\eta}{1-\eta}+\frac{1}{\eta} \sum_{i=1}^n \frac{1}{\gamma_i}\|\alpha_{i+\frac{1}{2}}-\alpha_i\|^{2}.\]
For monotone FBSDEs with Holder coefficients we refer to \cite{monotone-sol-meynard}
\end{remarque}
\subsection{Outside of the mean field regime}
Let us mention that this method is still valid outside of the mean field regime, namely this allows to compute the characteristics of system of PDEs of the form 
\begin{equation}
    \label{eq: non linear transport finite dimension}
    \partial_t U+F(x,U)\cdot \nabla_x U=G(x,U),
\end{equation}
in the monotone regime. For mean field games, this corresponds to the study of finite state space MFGs \cite{Lions-college}. Computing solutions to this system can be challenging as the regularity of this PDE stems from its monotonicity. In general this means that either the interval of time considered must be sufficiently small, or the approximation used must be monotonicity preserving. Finally even in the so called potential regime, that is, when \eqref{eq: non linear transport finite dimension} arises from the gradient of a convex HJB equation \cite{convex-HJB}
\[\partial_t u+H(x,\nabla_x u)=0,\]
the method we propose appears new. Although it can only be applied to the framework of convex HJB equation compared to more general method such as shooting algorithms \cite{numerical-HJB}, it exhibits global convergence independently of the chosen initial control. 

Another point which is relevant in the finite dimensional setting is that $v$ can be monotone without requiring that all coefficients are. Consider the following HJB equation for simplicity
\begin{equation}
\label{hjb}
\left\{
    \begin{array}{c}
       \partial_t u+H(x,\nabla_x u)-\sigma \Delta_x u=0, \quad (t,x)\in (0,T)\times \reels^d,\\ 
       u(T,x)=g(x) \quad \forall x\in \reels^d.
    \end{array}
\right.
\end{equation}
We place ourselve under the following assumptions
\begin{hyp}
    \label{hyp: hjb equation}
Let $L$ be the fenchel conjugate of $H$ in \eqref{hjb}, 
\begin{enumerate}
    \item[-]there exists two constant $c_\alpha>0, c_x\geq 0$ such that \[\forall (x,\alpha)\in \reels^{2d} \quad \left(\begin{array}{c} D^2_x L \quad D^2_{x,\alpha} L\\D^2_{\alpha,x} \quad D^2_\alpha L
\end{array}. \right)(x,\alpha)\geq  \left(\begin{array}{c} -c_x I_d\quad  0\\ 0  \quad c_\alpha I_d\end{array}. \right),\]in the sense of distributions.
    \item[-] there exists $c_g\geq 0$ such that \[\forall (x,y)\in \reels^{2d}, \quad (g(x)-g(y))\cdot (x-y)\geq -c_g |x-y|^2.\]
    \item[-] $\nabla_xL,\nabla_\alpha L,\nabla_x g$ are lipschitz.
    \item[-] $\sigma>0$.  
\end{enumerate}
\end{hyp}
Those assumptions, especially on $\nabla_\alpha L$ are much stronger than what is needed for the wellposedness (in a classical sense) of \eqref{hjb}, nevertheless they are quite standard in optimal control and it is a convenient setting to state the following: 
\begin{lemma}
    Under Hypothesis \ref{hyp: hjb equation}, there exists a unique solution $u\in C([0,T]\times \reels^d)\cup C^{1,2}((0,T)\times \reels^d)$ to \eqref{hjb}. Moreover $x\mapsto \nabla_x u(t,x)$ is Lipschitz uniformly in $t\in [0,T]$. In particular for any initial condition $X\in \mathcal{H}$ there exists a unique solution to the forward backward system 
    \begin{equation}
        \label{fbsde hjb equation}
        \left\{ \begin{array}{c}
        dX_t=-D_p H(X_t,U_t)dt+\sqrt{2\sigma} dB_t ,\quad X_0=X,\\
        dU_t=D_x H(X_t,U_t)dt-Z_tdB_t, \quad U_T=\nabla_x g(X_T),
    \end{array}\right.,
\end{equation}
whose decoupling field is given by $(t,x)\mapsto \nabla_x u(t,x)$.
\end{lemma}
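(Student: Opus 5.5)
Under Hypothesis \ref{hyp: hjb equation}, the natural route is to get a global Lipschitz bound on $\nabla_x u$ from the semiconcavity/convexity structure, and then use that bound to decouple the FBSDE \eqref{fbsde hjb equation}. I would proceed in four steps.

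\textbf{Step 1: Hamiltonian.} Because $\alpha\mapsto L(x,\alpha)$ is $c_\alpha$-strongly convex and $\nabla_\alpha L$ is Lipschitz, its Fenchel conjugate $H(x,\cdot)$ is $C^{1,1}$. Moreover $D_pH(x,p)$ is the unique $\alpha$ solving $\nabla_\alpha L(x,\alpha)=p$, with Lipschitz constant $1/c_\alpha$ in $p$. By the implicit function theorem, $D_pH$ is also Lipschitz in $x$, and $D_xH(x,p)=-\nabla_xL(x,D_pH(x,p))$ is Lipschitz. So both $H$ and its first derivatives have linear growth and the required regularity.

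\textbf{Step 2: existence and uniqueness of $u$.} The equation \eqref{hjb} is uniformly parabolic ($\sigma>0$) with a Hamiltonian that is Lipschitz in $p$ with linear growth. Standard viscosity theory then gives a unique continuous viscosity solution with at most quadratic growth, via a comparison principle. Parabolic regularity (Schauder estimates applied locally, after a bootstrap from a Lipschitz bound on $u$) upgrades it to $C^{1,2}$ in the interior. Uniqueness in the classical class follows from the same comparison principle. The real content is the Lipschitz bound on $\nabla_x u$, handled next.

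\textbf{Step 3: semiconcavity and semiconvexity of $u$ (main obstacle).} I would use the stochastic control representation
\[
u(t,x)=\inf_\alpha \esp{g(X_T)+\int_t^T L(X_s,\alpha_s)ds},\qquad dX_s=-\alpha_s\,ds+\sqrt{2\sigma}\,dB_s,\quad X_t=x.
\]
The two one-sided bounds come from different mechanisms.

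\emph{Semiconcavity.} Perturb the starting point symmetrically to $x\pm h$ while keeping the same optimal control. Since $\nabla_x L$ and $\nabla_x g$ are Lipschitz, this gives $u(t,x+h)+u(t,x-h)-2u(t,x)\le C|h|^2$.

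\emph{Semiconvexity.} Take optimal controls $\alpha^\pm$ for the starting points $x\pm h$, and compare against the averaged control $(\alpha^++\alpha^-)/2$ started from $x$. The Hessian lower bound on $L$ (note $-c_x$ in $x$ but $c_\alpha$ in $\alpha$), together with $g(x)-g(y)\cdot(x-y)\ge -c_g|x-y|^2$, gives $u(t,x+h)+u(t,x-h)-2u(t,x)\ge -C|h|^2$.

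This second direction is the delicate one. The trajectory deviation $X^+-X^-$ is driven only by $\alpha^+-\alpha^-$, so the negative parts $-c_x|X^+-X^-|^2$ and $-c_g|X^+_T-X^-_T|^2$ must be controlled by the gain $c_\alpha\int|\alpha^+-\alpha^-|^2$. Over a long horizon this can fail. If so, I would fall back on a crude estimate $|X^+_s-X^-_s|\le 2|h|+\int|\alpha^+-\alpha^-|$ and use the Lipschitz bounds only. The resulting bound need not be uniform as $T$ grows, but for fixed $T$ it should close with a Gronwall argument.

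Together, the two bounds give $\|D^2_xu\|_\infty\le C(T)$. Equivalently, $x\mapsto\nabla_xu(t,x)$ is Lipschitz uniformly in $t\in[0,T]$.

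\textbf{Step 4: the FBSDE.} With $\nabla_xu$ Lipschitz, the SDE $dX_t=-D_pH(X_t,\nabla_xu(t,X_t))dt+\sqrt{2\sigma}dB_t$ has Lipschitz coefficients, so it admits a unique strong solution. Set $U_t=\nabla_xu(t,X_t)$ and $Z_t=\sqrt{2\sigma}D^2_xu(t,X_t)$.

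To check that $(X,U,Z)$ solves \eqref{fbsde hjb equation}, differentiate \eqref{hjb} in $x$. Writing $w=\nabla_x u$, the differentiated equation is
\[
\partial_t w+D_pH(x,w)\cdot\nabla_x w+D_xH(x,w)-\sigma\Delta w=0 .
\]
Applying Itô's formula to $w(t,X_t)$ (using a mollification if more regularity is needed), the transport terms cancel and we get $dU_t=D_xH(X_t,U_t)dt-Z_tdB_t$. The terminal condition $U_T=\nabla_x g(X_T)$ holds because $u(T,\cdot)=g$.

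For uniqueness, take any solution $(X',U',Z')$ and compare $U'_t$ with $\nabla_xu(t,X'_t)$. Apply Itô's formula to $|U'_t-\nabla_xu(t,X'_t)|^2$: the Lipschitz bounds on $H$ and $\nabla_x u$ yield a Gronwall inequality showing the difference vanishes. Hence $X'$ solves the decoupled SDE, and uniqueness of that SDE gives $(X',U',Z')=(X,U,Z)$. This also identifies $\nabla_x u$ as the decoupling field.
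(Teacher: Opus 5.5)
Your Step 3 has a genuine gap in the semiconvexity direction, exactly where you flag it, and the fallback you propose cannot repair it. With the crude bound $|X^+_s-X^-_s|\le 2|h|+\int_t^s|\alpha^+-\alpha^-|$, the negative terms are only bounded by $C(T)\bigl(|h|^2+\esp{\int_t^T|\alpha^+_s-\alpha^-_s|^2ds}\bigr)$. Closing the estimate therefore requires $\esp{\int_t^T|\alpha^+_s-\alpha^-_s|^2ds}\le C|h|^2$, i.e.\ Lipschitz dependence of the optimal control on the initial point. Since the optimal feedback is $D_pH(x,\nabla_xu(t,x))$, this is essentially the statement you are trying to prove, so there is no independent Gronwall inequality to run. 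Nothing in that step uses $\sigma>0$ either.

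In fact no argument can close it for arbitrary $T$, because the conclusion is false. Take $H(x,p)=\frac12|p|^2+\frac{c_x}{2}|x|^2$, i.e.\ $L(x,\alpha)=\frac12|\alpha|^2-\frac{c_x}{2}|x|^2$, and $g=0$. These satisfy Hypothesis \ref{hyp: hjb equation} with $c_\alpha=1$ and $c_g=0$. The value function is $u(t,x)=\frac12P(t)|x|^2+r(t)$ with $P'=P^2+c_x$, $P(T)=0$. Hence $P(t)=-\sqrt{c_x}\tan(\sqrt{c_x}(T-t))$, which blows up at $T-t=\pi/(2\sqrt{c_x})$ whatever $\sigma$ is, since the noise only enters $r$. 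Consistently, for $T=\pi/(2\sqrt{c_x})$, take expectations in \eqref{fbsde hjb equation} with $(m,n)=(\esp{X_t},\esp{U_t})$. This gives $m'=-n$, $n'=c_xm$, $n(T)=0$, which forces $\esp{X}=0$, so the FBSDE has no solution when $\esp{X}\neq0$.

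So the lemma needs a horizon restriction. The paper's own sketch (short-time existence plus Schauder estimates) does not address this either: continuing a local solution requires precisely the global bound on $D^2_xu$ that is missing.

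Under a smallness condition of the type $c_v>0$ imposed in the next lemma, your primary averaged-control argument does close. It then supplies exactly the a priori estimate the paper's continuation argument lacks. Write $\delta_s=X^+_s-X^-_s=2h-\int_t^s(\alpha^+_r-\alpha^-_r)dr$ and use $|\delta_s|^2\le(1+\eta)4|h|^2+(1+\eta^{-1})(s-t)\int_t^s|\alpha^+_r-\alpha^-_r|^2dr$. The comparison then gives
\[
u(t,x+h)+u(t,x-h)-2u(t,x)\ge-(1+\eta)(c_g+c_xT)|h|^2+\frac14\Bigl[c_\alpha-(1+\eta^{-1})\Bigl(\frac{c_xT^2}{2}+c_gT\Bigr)\Bigr]\esp{\int_t^T|\alpha^+_s-\alpha^-_s|^2ds}.
\]
So $c_\alpha>c_xT^2/2+c_gT$ suffices, taking $\eta$ large. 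The same condition makes the cost strongly convex in $\alpha$, so $u$ is finite and optimal controls exist.

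Two smaller corrections. First, $H$ is quadratic in $p$, not Lipschitz (only $D_pH$ is), so Step 2 needs a comparison principle for quadratic-growth Hamiltonians. Second, with \eqref{hjb} as literally written, the transport and Laplacian terms in your Itô computation add up instead of cancelling, and that equation is not the HJB of your control problem. You must use $-\partial_tu+H(x,\nabla_xu)-\sigma\Delta_xu=0$, and then $Z_t=-\sqrt{2\sigma}D^2_xu(t,X_t)$.
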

\begin{proof}
In this setting the uniqueness of a solution to \eqref{hjb} follows from the theory of viscosity solutions \cite{crandall1992users}. The existence is also quite standard. For example, it suffices to use any short time existence result \cite{lipschitz-sol} combined with schauder type estimates \cite{krylov1996holder}. The fact that $\nabla_x u$ is the decoupling field associated to the FBSDE \eqref{fbsde hjb equation} is then simply a consequence of Ito's lemma. 
\end{proof}
Since the existence of a solution to \eqref{fbsde hjb equation} for any initial condition has been established we now show the convergence of the algorithm introduced in this section 
\begin{lemma}
    Fix an initial condition $X_0\in \mathcal{H}^d$ and let $v$ be defined as in \ref{lemma: def v mfgc} for $L,g$. Under Hypothesis \ref{hyp: hjb equation}, if 
    \[c_v=c_\alpha-T(c_x+c_g)>0,\]
    and 
    \[\gamma \leq \min \left(\frac{1}{2\|v\|_{Lip}},\frac{c_v}{\|v\|_{Lip}^2}\right),\]

    then there exists a $\lambda \in (0,1)$ depending only on $c_v,\gamma, \|v\|_{Lip}$ such that for any $\alpha_1\in (\mathcal{H}^T)^d$ letting 
\[\left\{\begin{array}{c}
     \alpha_{n+\frac{1}{2}}=\alpha_n-\gamma v(\alpha_n),  \\
     \alpha_{n+1}=\alpha_n-\gamma v(\alpha_{n+\frac{1}{2}}).
\end{array}\right. \]
the following holds 
\[\forall n\in \mathbb{N}, \quad n\geq 1 ,\quad \|\alpha_n-\alpha^*\|_T\leq \lambda^n\|\alpha_1-\alpha^*\|_T\]
for $\alpha^*=\left(D_p H(X_t,U_t)\right)_{t\in [0,T]}$ the optimal control associated to \eqref{fbsde hjb equation}.
\end{lemma}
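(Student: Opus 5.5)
The plan is to reduce the statement to Theorem \ref{lemma: exponential convergence}. Its proof only uses three facts: $v$ is Lipschitz, $v(\alpha^*)=0$, and $v$ is strongly monotone with some constant. Here the strong monotonicity constant will be $c_v=c_\alpha-T(c_x+c_g)$ in place of $c_L$.

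Lipschitz regularity of $v$ follows from Lemma \ref{lemma: def v mfgc}, since no monotonicity was used in that part. The identity $v(\alpha^*)=0$ for $\alpha^*=(D_pH(X_t,U_t))_{t}$ follows from the FBSDE \eqref{fbsde hjb equation}. Indeed, $\alpha^*_t=D_pH(X_t,U_t)$ is equivalent to $U_t=\nabla_\alpha L(X_t,\alpha^*_t)$ by Fenchel duality. Also $D_xH(x,p)=-\nabla_x L(x,D_pH(x,p))$, so the backward equation for $U$ coincides with the equation defining $U^{\alpha^*}$. Hence $U^{\alpha^*}=U$ and $v(\alpha^*)=\nabla_\alpha L(X^{\alpha^*},\alpha^*)-U^{\alpha^*}=0$. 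Here $X^{\alpha^*}=X$ by uniqueness of the forward SDE.

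The core step is the monotonicity estimate
\[\langle v(\alpha)-v(\alpha'),\alpha-\alpha'\rangle^T\geq c_v\|\alpha-\alpha'\|_T^2.\]
Set $\delta X_t=X^\alpha_t-X^{\alpha'}_t=-\int_0^t(\alpha_s-\alpha'_s)ds$, which has no noise since the Brownian terms cancel. As in the proof of Lemma \ref{lemma: def v mfgc}, apply Itô to $(U^\alpha_t-U^{\alpha'}_t)\cdot\delta X_t$. This gives
\begin{align*}
\langle v(\alpha)-v(\alpha'),\alpha-\alpha'\rangle^T={}&\langle \nabla_x g(X^\alpha_T)-\nabla_x g(X^{\alpha'}_T),\delta X_T\rangle\\
&+\int_0^T \Big\langle (\nabla_xL,\nabla_\alpha L)(X^\alpha_t,\alpha_t)-(\nabla_xL,\nabla_\alpha L)(X^{\alpha'}_t,\alpha'_t),(\delta X_t,\alpha_t-\alpha'_t)\Big\rangle dt,
\end{align*}
with the sign conventions fixed so that the right side is exactly the first-order expression \eqref{first order optimality}, as in that lemma. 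Integrating the Hessian bound of Hypothesis \ref{hyp: hjb equation} along segments, pointwise and then in expectation, the integrand is bounded below by $c_\alpha|\alpha_t-\alpha'_t|^2-c_x|\delta X_t|^2$. The terminal term is bounded below by $-c_g\|\delta X_T\|^2$.

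Finally use Cauchy–Schwarz: $\|\delta X_t\|^2\le t\int_0^t\|\alpha_s-\alpha'_s\|^2ds\le T\|\alpha-\alpha'\|_T^2$. This gives $\int_0^T\|\delta X_t\|^2dt\le T^2\|\alpha-\alpha'\|_T^2$ and $\|\delta X_T\|^2\le T\|\alpha-\alpha'\|^2_T$. So the lower bound is $(c_\alpha-T^2c_x-Tc_g)\|\alpha-\alpha'\|_T^2$.

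This is the delicate point. The naive bound gives $T^2c_x$ rather than $Tc_x$. To obtain the stated constant I would use the sharper bound $\int_0^T\|\delta X_t\|^2dt\le \frac{T^2}{2}\|\alpha-\alpha'\|_T^2$, which holds since $\|\delta X_t\|^2\le t\|\alpha-\alpha'\|_T^2$. Even so, matching $c_v$ exactly may require either $T\le 1$ or an adjustment of constants. If the constant does not match, I would state the result with $c_v$ replaced by $c_\alpha-\frac{T^2}{2}c_x-Tc_g$, which is what the argument naturally yields.

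Once strong monotonicity holds with constant $c_v>0$, the computation in the proof of Theorem \ref{lemma: exponential convergence} carries over verbatim with $c_L$ replaced by $c_v$. Under the stated step condition it yields the contraction factor $\lambda^2=1-2\gamma c_v+4\gamma^2c_v\|v\|_{Lip}$, which lies in $(0,1)$, and hence the claimed geometric rate.
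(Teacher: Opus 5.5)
Your route is the paper's route: Lipschitz continuity of $v$, the identity $v(\alpha^*)=0$, the Itô identity turning $\langle v(\alpha)-v(\alpha'),\alpha-\alpha'\rangle^T$ into the first-order expression, the Hessian lower bounds, and then Theorem~\ref{lemma: exponential convergence} with $c_L$ replaced by the monotonicity constant. The two proofs differ only at the point where you hesitate, and there you are right and the paper is not. The paper obtains $c_v$ by asserting $\|X^\alpha_t-X^{\alpha'}_t\|^2\le\|\alpha-\alpha'\|_T^2$ for every $t\in[0,T]$, which is false for $t>1$. The correct bound is your $t\|\alpha-\alpha'\|_T^2$, and it yields the constant $c_\alpha-\frac{T^2}{2}c_x-Tc_g$. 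Since $\frac{T^2}{2}\le T$ exactly when $T\le 2$ (not only when $T\le 1$), this constant is at least $c_v$ for $T\le 2$. So in that range your argument proves the lemma as stated, modulo the two slips noted at the end.

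For large $T$ the problem is not your proof: the statement itself is false. Take $d=1$, $g=0$ and $L(x,a)=\frac{c_\alpha}{2}a^2-\frac{c_x}{2}x^2$, which satisfies Hypothesis~\ref{hyp: hjb equation} with $c_g=0$. Your Itô identity is then an exact equality. For the deterministic difference $\alpha-\alpha'\equiv 1$, so that $X^\alpha_t-X^{\alpha'}_t=-t$, it gives
\[\langle v(\alpha)-v(\alpha'),\alpha-\alpha'\rangle^T=c_\alpha T-c_x\frac{T^3}{3}=\Big(c_\alpha-\frac{c_xT^2}{3}\Big)\|\alpha-\alpha'\|_T^2 .\]
With $c_x=1$, $c_\alpha=11$, $T=10$ one has $c_v=1>0$, yet $v$ is not even monotone. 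Here $v$ is the gradient of the quadratic cost, so the cost is unbounded below along this direction and there is no optimal control $\alpha^*$ at all. Equivalently, the Riccati equation of this linear-quadratic problem blows up at distance $\frac{\pi}{2}\sqrt{c_\alpha/c_x}\approx 5.2<T$ from the terminal time. The correct statement is therefore the one you propose, with $c_v$ replaced by $c_\alpha-\frac{T^2}{2}c_x-Tc_g$ both in the positivity assumption and in the step-size condition.

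Two smaller slips are inherited from the statement. First, under the non-strict condition $\gamma\le\frac{1}{2\|v\|_{Lip}}$, your factor $1-2\gamma c_v(1-2\gamma\|v\|_{Lip})$ equals $1$ when $\gamma=\frac{1}{2\|v\|_{Lip}}$ is admissible. You therefore need the strict inequality of Theorem~\ref{lemma: exponential convergence} to get $\lambda<1$. Second, iterating the one-step contraction from $\alpha_1$ gives $\lambda^{n-1}\|\alpha_1-\alpha^*\|_T$, not $\lambda^{n}\|\alpha_1-\alpha^*\|_T$. For $n=1$ the claimed bound fails unless $\alpha_1=\alpha^*$.
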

\begin{proof}
    Since coefficients are all Lipschitz, it is evident that $v$ is Lipschitz. Now let us take two controls $\alpha,\alpha'$, using Hypothesis \ref{hyp: hjb equation}, we get
    \[\langle v(\alpha)-v(\alpha'),\alpha-\alpha'\rangle^T\geq c_\alpha \|\alpha-\alpha'\|^2_T-c_x\int_0^T \|X^\alpha_t-X^{\alpha'}_t\|^2dt-c_g \|X^\alpha_T-X^{\alpha'_T}\|^2.  \]
    By definition 
    \[\forall t\in [0,T], \quad \|X^\alpha_t-X^{\alpha'}_t\|^2\leq \|\alpha-\alpha'\|^2_T.\]
    As such 
    \[\langle v(\alpha)-v(\alpha'),\alpha-\alpha'\rangle^T\geq c_v\|\alpha-\alpha'\|^2_T.\]
    Since this is true for any controls $\alpha,\alpha'$, $v$ is Lipschitz continuous and strongly monotone, and we are in the conditions of applications of Theorem \eqref{lemma: exponential convergence}.
\end{proof}
Let us remark that even in mean field games, $v$ may be monotone even when coefficients aren't. Indeed the above result is adapted easily to the mean field setting on short intervals of time. So long as the monotonicity of $v$ is conserved, the existence and uniqueness of a solution are not particularly hard to prove, this phenomenon is sometimes refered to as the semi-monotone regime in short time \cite{propagation-of-monotonicity}. 
 \end{subsection}
\section{FBSDEs with common noise}
\label{section: common noise}
We now turn to the problem of FBSDEs with common noise 
\begin{equation}
    \label{eq: fbsde with common noise}
\left\{
\begin{array}{l}
     \displaystyle X_t=X_0-\int_0^t  F(X_s,p_s,U_s,\mathcal{L}(X_s,U_s|\mathcal{F}^0_s))ds+\sqrt{2\sigma}B_t, \\
    \displaystyle  U_t= U_T+\int_t^T G(X_s,p_s,U_s,\mathcal{L}\left(X_sU_s|\mathcal{F}^0_s\right))ds-\int_t^T Z_sd(B_s,W_s),\\
    \displaystyle p_t=p_0-\int_0^t b(p_s)ds+\sqrt{2\sigma^0}W_t,\\
    U_T=g(X_T,p_T,\mathcal{L}(X_T|\mathcal{F}^0_T)),
\end{array}
\right.
\end{equation}
for $X_0\in \mathcal{H}^d,q_0\in \mathcal{H}^{d^0}$ initial conditions independent of each others and of $(W_t,B_t)_{t\geq 0}$. In this case the filtration associated to common noise is $(\mathcal{F}^0_t)_{t\in [0,T]}$ which is the augmented filtration associated to $(\mathcal{\tilde{F}}^0_t)_{t\in [0,T]}$ defined by 
\[\mathcal{\tilde{F}}^0_t=\sigma(q_0,(W_s)_{s\leq t}).\]
\begin{remarque}
We insist on the fact that this class of FBSDEs includes the case of additive common noise often studied in mean field games. Indeed, consider the following problem
\begin{equation}
    \label{eq: fbsde with additive common noise}
\left\{
\begin{array}{l}
     \displaystyle X_t=X_0-\int_0^t  F(X_s,U_s,\mathcal{L}(X_s,U_s|\mathcal{F}^0_s))ds+\sqrt{2\sigma}B_t+\sqrt{2\sigma^0}W_t, \\
    \displaystyle  U_t= U_T+\int_t^T G(X_s,U_s,\mathcal{L}\left(X_s,U_s|\mathcal{F}^0_s\right))ds-\int_t^T Z_sd(B_s,W_s),\\
    U_T=g(X_T,\mathcal{L}(X_T|\mathcal{F}^0_T)),
\end{array}
\right.
\end{equation}
Introducing the two processes 
\[\forall t\in [0,T], \quad 
\left\{
    \begin{array}{c}
       p_t=\sqrt{2\sigma^0}W_t,\\
       Y_t=X_t-p_t, 
    \end{array}
\right.
\]
Formally, the triple $(Y_s,p_s,Z_s)_{s\in [0,T]}$ is a solution to 
\begin{equation*}
\left\{
\begin{array}{l}
     \displaystyle Y_t=X_0-\int_0^t  F(Y_s+p_s,U_s,T(\cdot,p_s)_{\#}\mathcal{L}(Y_s,U_s|\mathcal{F}^0_s))ds+\sqrt{2\sigma}B_t, \\
     \displaystyle p_t=\sqrt{2\sigma^0}W_t,\\
    \displaystyle  U_t= U_T+\int_t^T G(Y_s+p_s,U_s, T(\cdot,p_s)_{\#}\mathcal{L}(Y_s,U_s|\mathcal{F}^0_s))ds-\int_t^T Z_sd(B_s,W_s),\\
    U_T=g(Y_T+p_T,(id_{\reels^d}+p_T)_{\#}\mathcal{L}(X_T|\mathcal{F}^0_T)),
\end{array}
\right.
\end{equation*}
where $f_{\#}\mu$ indicates the pushforward of a measure $\mu$ by the function $f$ and 
\[(id_{\reels^d}+p): x\mapsto x+p, \quad T(\cdot,p):(x,y)\mapsto (x+p,y).\]
In particular if $(F,G,g)$ are Lipschitz functions on $\reels^{2d}\times \mathcal{P}_2(\reels^{2d})$ then the coefficients are still Lipschitz in all arguments after this transformation. A rigorous equivalence between these two formulations is established in \cite{common-noise-in-MFG} Lemma 5.3.
\end{remarque}
 We work under the following assumption 
\begin{hyp}
    \label{hyp: FBSDE with common noise}
    The coefficients $(g,F,G,b)$ are such that 
\begin{enumerate}
    \item[-] Hypothesis \ref{hyp: L2 monotonicity} is satisfied by 
    \[(x,u,m)\mapsto (g(x,p,\pi_dm),F(x,p,u,m),G(x,p,u,m))\]
    uniformly in $p\in \reels^{d^0}$.
    \item[-] $(F,G,g,b)$ are Lipschitz in all their arguments for $\mathcal{W}_2$ in the measure argument. 
\end{enumerate}
\end{hyp}
It has already been observed in \cite{noise-add-variable}, that the addition of such common noise does not perturb the monotonicity of the system. Hence results of wellposedness for monotone FBSDEs are easily adapted to this particular system where the common noise $(p_t)_{t\geq 0}$ evolves independently of the rest of the dynamics
\begin{lemma}
    \label{lemma: existence with common noise}
Under Hypothesis \ref{hyp: FBSDE with common noise}, for any initial condition $X_0\in \mathcal{H}^d,q_0\in \mathcal{H}^{d^0}$, independent of each other and of $(W_t,B_t)_{t\geq 0}$, there exists a unique strong solution to the FBSDE \eqref{eq: fbsde with common noise} 
\end{lemma}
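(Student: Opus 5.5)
The plan is to reduce Lemma \ref{lemma: existence with common noise} to the no-common-noise case, Theorem \ref{thm: existence fbsde}. The key point is that $(p_t)_{t\in[0,T]}$ solves an autonomous SDE with Lipschitz drift $b$. It is therefore uniquely and strongly determined by $(q_0,W)$, and it is $\mathcal{F}^0$-adapted. Throughout, we work in the Hilbert space of square-integrable processes adapted to the full filtration generated by $(X_0,q_0,B,W)$. The conditional law $\mathcal{L}(\cdot|\mathcal{F}^0_s)$ plays the role of $\mathcal{L}(\cdot)$.

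\textbf{Step 1: $L^2$-monotonicity survives conditioning on $\mathcal{F}^0$.} For any two pairs $(X,U),(Y,V)$ of square-integrable random variables and a fixed $\mathcal{F}^0_s$-measurable $p_s$, disintegrate with respect to $\mathcal{F}^0_s$. Conditionally on $\mathcal{F}^0_s$, the vector $(X,U,Y,V)$ has a joint law, and Hypothesis \ref{hyp: FBSDE with common noise} holds uniformly in $p$. Apply the monotonicity inequality of Hypothesis \ref{hyp: L2 monotonicity} on a regular conditional probability space, using Proposition \ref{prop: expectation to pointwise}-type arguments to pass from the lifted inequality to one valid for any coupling. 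Taking the expectation then gives
\[\esp{\left(\Delta F_s\right)\cdot(U-V)+\left(\Delta G_s\right)\cdot(X-Y)}\geq c_F\|U-V\|^2,\]
and similarly for $g$. A technical point is that Hypothesis \ref{hyp: L2 monotonicity} is stated on $\mathcal{H}$. I will handle this by noting that it depends only on the joint law of $(X,U,Y,V)$, so it transfers to any atomless probability space, in particular to the conditional ones.

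\textbf{Step 2: uniqueness.} Follow the proof of Theorem \ref{thm: existence fbsde} verbatim. Set $V_t=(U^1_t-U^2_t)\cdot(X^1_t-X^2_t)$; the common process $p$ is identical for both solutions. Apply Itô's lemma, where the $Z\,dW$ terms are martingales. Step 1 gives $0\le\esp{V_T}\le\esp{V_t}\le\esp{V_0}=0$, strong monotonicity then gives $U^1=U^2$ in $\mathcal{H}^T$, and Gronwall yields $X^1=X^2$.

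\textbf{Step 3: existence.} This is the main obstacle, since \cite{monotone-sol-meynard} is cited for the case without common noise. I would first obtain a short-time solution by the standard Lipschitz fixed-point argument for FBSDEs, using that $\mathcal{W}_2(\mathcal{L}(X|\mathcal{F}^0),\mathcal{L}(Y|\mathcal{F}^0))^2\le\espcond{|X-Y|^2}{\mathcal{F}^0}$. Then I would extend to arbitrary $T$ by the method of continuity in the style of \cite{bensoussan} and \cite{monotone-sol-meynard}. Interpolate $(F,G,g)$ between the coefficients and the trivially solvable strongly monotone system $(F,G,g)=(-U,X,X)$-type. Use the a priori stability estimate of Remark \ref{remarque: strong stability of fbsde}, which remains valid here by the same Itô computation combined with Step 1, to show that the set of solvable interpolation parameters is open with a uniform step size. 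That step size depends only on $c_F$, $T$ and the Lipschitz constants. Finitely many steps then reach the original system. Finally, the process $p$ is merely an exogenous Lipschitz input, so every estimate holds uniformly in $p$. Alternatively, one can follow \cite{noise-add-variable}, treating $p$ as an additional state variable with trivially monotone dynamics.
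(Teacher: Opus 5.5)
Your proof is correct in outline, but it takes a different route from the paper. The paper gives no argument of its own here. It defers to Theorem 3.33 and Subsection 3.4.1 of \cite{monotone-sol-meynard}, i.e.\ to the existence of a Lipschitz decoupling field for \eqref{eq: fbsde with common noise}, and reads off existence and uniqueness of a strong solution from it.

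You instead give a self-contained Peng--Wu/Bensoussan-type continuation in the coefficients on the whole interval $[0,T]$. Its only ingredients are two facts:
\begin{itemize}
\item the conditional monotonicity of your Step 1, which is the same ``monotonicity via probability measures'' fact the paper uses in the corollary that follows;
\item a stability estimate of the type of Remark \ref{remarque: strong stability of fbsde}, which does hold with constants depending only on $c_F$, $T$ and the Lipschitz constants, uniformly in the interpolation parameter.
\end{itemize}
In Step 1 no appeal to Proposition \ref{prop: expectation to pointwise} is needed. Hypothesis \ref{hyp: L2 monotonicity} is already a statement about arbitrary joint laws in $\mathcal{P}_2(\reels^{4d})$, all of which are realized on the atomless space $\Omega$, so it applies directly to regular conditional laws.

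Your route is elementary, needs no Markovian structure, and produces the common-noise stability estimate along the way. The paper's route yields strictly more: a Lipschitz decoupling field, of the form $\mathcal{U}(t,x,p,\mu)$. This is the kind of representation the paper invokes again, e.g.\ for the moment bounds in the proof of Theorem \ref{thm: approximation fbsde}.

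There are two small fixes.

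\textbf{Base system sign.} In the paper's convention, $X_t=X_0-\int_0^tF\,ds$, $U_t=U_T+\int_t^TG\,ds-\dots$, and monotonicity reads $\langle\Delta F,\Delta U\rangle+\langle\Delta G,\Delta X\rangle\ge c_F\|\Delta U\|^2$. The base system must then be $(F,G,g)=(u,x,x)$ or $(u,0,0)$, not $(-u,x,x)$. The latter is anti-monotone in $u$, and for it the Riccati equation $k'=-(1+k^2)$, $k(T)=1$, of the decoupling field $U_t=k(t)X_t$ blows up at $T-t=\pi/4$. With $(u,x,x)$ one gets $k\equiv 1$. The system with arbitrary perturbations $(\eta^x,\eta^u,\gamma_T)$ is then solved by $U=X+\theta$, with $\theta$ a linear BSDE. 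Solvability for \emph{all} such perturbations is the induction hypothesis the continuation step actually propagates.

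\textbf{Short-time step.} The preliminary short-time fixed point plays no role in continuation in the coefficients, which works directly on $[0,T]$. Short time plus uniform Lipschitz bounds is instead the mechanism behind the decoupling-field route, as in the proof of Lemma \ref{lemma: wellposedness fbsde strong monotonicity on F}.
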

\begin{proof}
For a proof of this result based on the existence of a Lipschitz decoupling field to the FBSDE \eqref{eq: fbsde with common noise}, we refer to \cite{monotone-sol-meynard} Theorem 3.33 and the subsection 3.4.1 of said work. 
\end{proof}
Let us remark that under the standing assumptions, the function 
\[U\mapsto F(X,p,U,\mathcal{L}(X,U)),\]
is invertible on $\mathcal{H}^d$ for any $(X,p)\in \mathcal{H}^{d}\times \reels^{d^0}$ by the monotonicity of $F$, with this Hilbertian inverse being denoted by $\tilde{F}^{-1}_u(X,p,\cdot)$. In particular following \cite{monotone-sol-meynard} Lemma 3.10 there exists a Lipschitz function $F_{inv}:\reels^d\times\reels^{d^0}\times \reels^d\times \mathcal{P}_2(\reels^{2d})\to \reels^d$ such that 
\[\forall (X,\alpha,p)\in \mathcal{H}^{2d}\times \reels^{d^0}, \quad F_{inv}(X,p,\alpha,\mathcal{L}(X,\alpha))=\tilde{F}^{-1}_u(X,p,\alpha)\]

Following the previous section, for a given $(\alpha_t)_{t\in [0,T]} \in \left(\mathcal{H}^T\right)^d$, we introduce the dynamics 
\begin{equation}
    \label{eq: linearized system with common}
 \left\{
\begin{array}{l}
    \displaystyle X^\alpha_t=X_0-\int_0^t \alpha_sds+\sqrt{2\sigma}B_t,\\
   \displaystyle  p_t=p_0-\int_0^t b(p_s)ds+\sqrt{2\sigma^0}W_t,\\
   \displaystyle  U^\alpha_t=U^\alpha_T-\int_t^T G_F(X^\alpha_s,p_s,\alpha_s,\mathcal{L}(X^\alpha_s,\alpha_s|\mathcal{F}^0_s))ds-Z_sd(B_s,W_s),\\
   U^\alpha_T=g(X^\alpha_T,p_T,\mathcal{L}(X^\alpha_T|\mathcal{F}^0_T)),
\end{array}
 \right.   
\end{equation}
where 
\begin{gather*}
\forall (x,p,a,X,\alpha)\in \reels^{2d+d^0}\times \mathcal{H}^{2d}, \quad \\
G_F(x,p,a,\mathcal{L}(X,\alpha))
=G(x,p,F_{inv}(x,p,a,\mathcal{L}(X,\alpha)),\mathcal{L}(X,F_{inv}(X,p,\alpha,\mathcal{L}(X,\alpha)))).
\end{gather*}
\begin{corol}
Under Hypothesis \ref{hyp: FBSDE with common noise}, for any $\alpha \in \left(\mathcal{H}^T\right)^d, X_0\in \mathcal{H}^d$ the system \eqref{eq: linearized system with common} has a unique solution $(X^\alpha_t,U^\alpha_t,Z^\alpha_t)_{t\in [0,T]}$. Moreover, letting
\begin{equation}
    \label{eq: v with common noise}
    v:\FuncDef{\left(\mathcal{H}^T\right)^d}{\left(\mathcal{H}^T\right)^d}{(\alpha_t)_{t\in [0,T]}}{ \left(F_{inv}(X^\alpha_s,p_s,\alpha_s,\mathcal{L}(X^\alpha_s,\alpha_s|\mathcal{F}^0_s))-U^\alpha_t\right)_{t\in [0,T]}},
\end{equation}
$v$ is Lipschitz on $\left(\mathcal{H}^T\right)^d$ and there exists a constant $C$ such that if 
\[\gamma\leq \frac{1}{\|v\|_{Lip}},\]
for any $\alpha^0\in \left(\mathcal{H}^T\right)^d$, the procedure 
\[\left\{\begin{array}{l}
     \alpha^{n+\frac{1}{2}}=\alpha^n-\gamma v(\alpha^n),  \\
     \alpha^{n+1}=\alpha^n-\gamma v(\alpha^{n+\frac{1}{2}}),\\
     \forall s\in [0,T], \quad \bar{U}^n_s=\frac{1}{n}\sum_{i=1}^n F_{inv}(X^{\alpha^{n+\frac{1}{2}}}_s,p_s,\alpha^{n+\frac{1}{2}}_s,\mathcal{L}(X^{\alpha^{n+\frac{1}{2}}}_s,\alpha^{n+\frac{1}{2}}_s|\mathcal{F}^0_s))
\end{array}\right. \]
satifies 
\[\forall n\geq 0, \quad C \int_0^T\| U_t-\bar{U}^n_t\|^2dt\leq\frac{ 1}{\gamma n}\int_0^T\|F_{inv}(X_t,p_t,U_t,\mathcal{L}(X_t,U_t|\mathcal{F}^0_t))-\alpha^0_t\|^2dt.\]
\end{corol}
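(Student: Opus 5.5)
The plan is to reduce everything to the common-noise-free case, i.e.\ to repeat the proof of Theorem \ref{thm: convergence fbsde}. The only new ingredient is conditioning on $\mathcal{F}^0$. Since the common noise $(p_t)_{t\in[0,T]}$ evolves autonomously, it never feeds back from $(X,U)$, so the Hilbertian monotonicity survives once we condition on the common noise.

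\textbf{Step 1: well-posedness and Lipschitz continuity of $v$.} For fixed $\alpha$, the forward component $X^\alpha$ and the process $p$ are explicit. The equation \eqref{eq: linearized system with common} is then a decoupled BSDE driven by $(B,W)$ with a driver that is given once $(X^\alpha,\alpha,p)$ is known, so existence and uniqueness follow from \cite{zhang2017backward} Theorem 4.3.1.

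For the Lipschitz bound, I will use that conditional laws satisfy
\[\mathcal{W}_2\big(\mathcal{L}(X|\mathcal{F}^0_s),\mathcal{L}(Y|\mathcal{F}^0_s)\big)^2\leq \espcond{|X-Y|^2}{\mathcal{F}^0_s}.\]
Taking expectations gives $L^2$ bounds on the measure terms. Combining this with the Lipschitz continuity of $F_{inv}$, $G$, $g$, the same Itô/Gronwall argument as in Lemma \ref{lemma: def v mfgc} yields $\|v(\alpha)-v(\alpha')\|_T\leq C\|\alpha-\alpha'\|_T$.

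\textbf{Step 2: conditional monotonicity (the main obstacle).} I need the analogue of the bilinear form $l(\alpha)$ and of its monotonicity. Specifically, for $\alpha,\alpha'$ with $U=F_{inv}(X^\alpha,p,\alpha,\cdot)$ and $V$ defined similarly,
\[\langle v(\alpha)-v(\alpha'),\alpha-\alpha'\rangle^T\geq c_F\int_0^T\|U_t-V_t\|^2dt.\]
The pairing identity $\langle v(\alpha),\alpha'-\alpha''\rangle^T=l(\alpha)(\alpha',\alpha'')$ again follows from Itô's lemma. The martingale term $Z\,d(B,W)$ has zero expectation, and $X^{\alpha'}-X^{\alpha''}$ carries no $dW$ part, so no cross-variation term appears.

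Monotonicity is the delicate point, because Hypothesis \ref{hyp: L2 monotonicity} is stated with unconditional laws. My approach is to work on a product space $\Omega=\Omega^0\times\Omega^1$, where the common noise $(p_0,W)$ lives on $\Omega^0$ and $(X_0,B)$ on $\Omega^1$. For fixed $\omega^0$:
\begin{itemize}
\item $p_s(\omega^0)$ is a fixed point of $\reels^{d^0}$;
\item the conditional law $\mathcal{L}(X_s,U_s|\mathcal{F}^0_s)(\omega^0)$ is the law of $(X_s,U_s)(\omega^0,\cdot)$ on $\Omega^1$.
\end{itemize}
Applying Hypothesis \ref{hyp: FBSDE with common noise} on $L^2(\Omega^1)$ with $p=p_s(\omega^0)$, which is valid since the monotonicity holds uniformly in $p$, and then integrating in $\omega^0$, gives the required inequality. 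The same fibrewise argument, together with \cite{monotone-sol-meynard} Lemma 3.10, justifies that $F_{inv}$ represents the fibrewise Hilbertian inverse.

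There is one point to check. Conditioning on $\mathcal{F}^0_s$ rather than on all of $\mathcal{F}^0_T$ must agree with the fibre law. This holds because $(X_s,U_s)$ depends on $W$ only through $\mathcal{F}^0_s$ and is independent of future increments of $W$. I expect this compatibility check to be the technically fussiest step.

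\textbf{Step 3: conclusion.} With strong monotonicity in the sense of Step 2 and Lipschitz continuity in the sense of Step 1, I apply Lemma \ref{GEG inequality} with a constant step $\gamma\leq 1/\|v\|_{Lip}$, exactly as in the proof of Theorem \ref{thm: convergence fbsde}. I take $x=\alpha^*=F(X,p,U,\mathcal{L}(X,U|\mathcal{F}^0))$, where $(X,U)$ is the solution given by Lemma \ref{lemma: existence with common noise}; note that $v(\alpha^*)=0$ and $F_{inv}(X^{\alpha^*},p,\alpha^*,\cdot)=U$.

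The Lemma gives the bound $\sum_i\langle v(\alpha^{i+\frac12}),\alpha^{i+\frac12}-\alpha^*\rangle^T\leq \|\alpha^*-\alpha^0\|_T^2/(2\gamma)$. The left-hand side is bounded below by $c_F\sum_i\int_0^T\|U_t-F_{inv}(\cdots\alpha^{i+\frac12})\|^2dt$, and Jensen's inequality lower-bounds this by $c_Fn\int_0^T\|U_t-\bar U^n_t\|^2dt$. This gives the stated estimate with $C=2c_F$. The right-hand side is written as in the statement using $\alpha^*$, which is the quantity identified there.
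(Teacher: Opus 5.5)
Your proposal is correct and follows essentially the same route as the paper: reduce to the proof of Theorem~\ref{thm: convergence fbsde}, with the only new ingredient being the monotonicity of $v$, obtained by conditioning on the common noise. Your fibrewise product-space argument is a concrete version of the paper's conditioning on $\mathcal{F}^0_T$ combined with the law-based formulation of $L^2$-monotonicity for fixed $p$, after which you apply Lemma~\ref{GEG inequality} and Jensen to get $C=2c_F$. One caution: the right-hand side you actually obtain is $\|F(X,p,U,\mathcal{L}(X,U|\mathcal{F}^0))-\alpha^0\|_T^2=\|\alpha^*-\alpha^0\|_T^2$, so say explicitly that the $F_{inv}$ appearing there in the statement (like the index $n$ in place of $i$ in $\bar U^n$) is a typo, rather than claiming your bound is literally the stated one.
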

\begin{proof}
This is almost a direct adaptation of Theorem \ref{thm: convergence fbsde}. The core of the proof remaining unchanged in the presence of an additional common noise. In particular it is straightforward to show that $v$ is still Lipschitz in this setting. Instead let us focus on showing that the monotonicity of $v$ is unchanged in this new problem. Let $\alpha,\alpha'\in \mathcal{H}^T$, by definition of the forward backward system \eqref{eq: fbsde with common noise}
\begin{gather*}\langle v(\alpha)-v(\alpha'),\alpha-\alpha'\rangle^T\\
    =\langle g(X^\alpha_T,p_T,\mathcal{L}(X^\alpha_T|\mathcal{F}^0_T))-g(X^{\alpha'}_T,p_T,\mathcal{L}(X^{\alpha'}_T|\mathcal{F}^0_T)),X^\alpha_T-X^(\alpha')_T\rangle\\
+\esp{\int_0^T \left(\Delta_s F_{inv}\right)\cdot (\alpha_s-\alpha'_s)ds+\int_0^T \left(\Delta_s G_F\right)\cdot (X^\alpha_s-X^{\alpha'}_s)ds}\\
\end{gather*}
with 
\begin{gather*}
\Delta_s F_{inv}=F_{inv}(X^\alpha_s,p_s,\alpha_s,\mathcal{L}(X^\alpha_s,\alpha_s|\mathcal{F}^0_s))- F_{inv}(X^{\alpha'}_s,p_s,\alpha'_s,\mathcal{L}(X^{\alpha'}_s,\alpha'_s|\mathcal{F}^0_s))\\
\Delta_s G_F=G_F(X^\alpha_s,p_s,\alpha_s,\mathcal{L}(X^\alpha_s,\alpha_s|\mathcal{F}^0_s))-G_F(X^{\alpha'}_s,p_s,\alpha'_s,\mathcal{L}(X^{\alpha'}_s,\alpha'_s|\mathcal{F}^0_s)).
\end{gather*}
For fixed $p\in \reels^{d^0}$, the monotonicity of the pair
\[(X,\alpha)\mapsto (F_{inv}(X,\alpha,p,\mathcal{L}(X,\alpha)),G_F(X,\alpha,p,\mathcal{L}(X,\alpha))),\]
has been established in Lemma \ref{lemma: property v fbsde}. Thus, using an equivalent definition of \\$L^2-$monotonicity viaprobability measures (instead of random variables), it is straightforward that 
\begin{gather*}\espcond{\int_0^T \left(\begin{array}{c}\Delta_s F_{inv}\\ \Delta_s G_F\end{array}\right)\cdot \left(\begin{array}{c}\alpha_s-\alpha'_s\\ X^\alpha_s-X^{\alpha'}_s\end{array}\right)ds}{\mathcal{F}^0_T}\\
    \geq c_F\espcond{\int_0^T\left|\Delta_s F_{inv}\right|^2 }{\mathcal{F}^0_T} \quad \text{a.s.}
\end{gather*}
The rest of the proof follows from arguments presented in the proof of Theorem \ref{thm: convergence fbsde} by using the above monotonicity estimate.
\end{proof}
\begin{remarque}
The adaptation of previously obtained convergence results to the case of FBSDEs with common noise is deceptively simple. Although the proof of convergence is quite similar, numerically this addition leads to significant changes in the algorithm considered.
\end{remarque}

\section{Numerical illustration}
\label{section: numerical}
\subsection{Presentation of a particle based method}
\subsubsection{Discretisation of the problem}
In this section we present some numerical results for the method introduced in this article. We first discretize the time interval considered $[0,T]$ with a time step $\Delta_t$ and work with discretized control $(\alpha_i)_{i\in \llbracket 1, N_t\rrbracket}=(\alpha(i \Delta_t))_{i\in \llbracket 1, N_t\rrbracket}$. The different processes are then computed with an explicit Euler scheme. Let us present the scheme more precisely in the case study of FBSDEs \eqref{FBSDE}
\begin{equation*}
\left\{
\begin{array}{l}
     \displaystyle X_t=X_0-\int_0^t  F(X_s,U_s,\mathcal{L}(X_s,U_s))ds+\sqrt{2\sigma}B_t, \\
    \displaystyle  U_t= g(X_T,\mathcal{L}(X_T))+\int_t^T G(X_s,U_s,\mathcal{L}\left(X_s,U_s\right))ds-\int_t^T Z_sdB_s.
\end{array}
\right.
\end{equation*}
We suppose that the map $F_{inv}:\reels^{2d}\times \mathcal{P}_2(\reels^{2d})\to \reels^d$ which is such that $\tilde{F}_{inv}=\tilde{F}^{-1}$ is known. The existence of such a map follows from Remark \ref{remarque: inverse of lift}, however as we already mentionned in Remark \ref{remarque: trouble if inverse not known}, computing it can be a challenge whenever $F$ depends on the law of $(U_t)_{t\in [0,T]}$. (although this is never a problem for mean field games, since in this case $F_{inv}=\nabla_\alpha L$ which is obviously known).

First we fix $N_p$ the number of paths simulated and introduce $(X^{i}_0)_{i\in \llbracket 1, N_p\rrbracket}\subset \reels^{N_p}$, where each $X^i_0$ is an independent realization of a random variable with law $\mathcal{L}(X_0)$. We also introduce $(G^{i}_j)_{(i,j)\in \llbracket 1, N_p\rrbracket\times \llbracket 1, N_t\rrbracket}$ a sequence of independent gaussian increments sampled from a normal distribution with mean $0$ and covariance $\sqrt{\Delta_t} I_d$. These will play the role of the Brownian increments 
\[W_{(i+1)\Delta_t}-W_{i\Delta_t}.\]
A discrete control $(\alpha^{i}_j)_{(i,j)\in \llbracket 1, N_p\rrbracket\times \llbracket 1, N_t\rrbracket}$ gives the control at the timestep $j$ along the path $i$.
We define $X^{i,0}=X^i_0$ and for $j\in  \llbracket 1, N_t\rrbracket$ 
\[X^{i}_j=X^{i}_{j-1}+\Delta_t \alpha^{i}_j+\sqrt{2\sigma}G^{i}_j.\]
To approximate the backward process $(U^\alpha_t)_{t\in [0,T]}$, we first define 
\[U^{i}_{N_t}=g(X^{i}_{N_t},\frac{1}{N_p}\sum_k\delta_{X^{k}_{N_t}}),\]
where $\delta_x$ indicates the dirac measure centered in $x\in \reels^d$
Then, letting 
\[\forall j\in \llbracket 1, N_t\rrbracket, \quad m_j=\frac{1}{N_p}\sum_k\delta_{(X^{k}_j,\alpha^{k}_j)},\]
indicate the empirical measure associated to the system and 
\[\forall j\in \llbracket 1, N_t\rrbracket, \quad m^F_j=\frac{1}{N_p}\sum_k\delta_{(X^{k}_j,F_{inv}(X^k_j\alpha^{k}_j,m_j))},\]
the sequence is constructed by backward induction for $j\in \llbracket 0, N_t-1\rrbracket$, 
\[U^{i}_j=\espcond{U^{i}_{j+1}+\Delta_t G(X^{i}_{j+1},F_{inv}(X^{i}_{j+1},\alpha^{i}_{j+1},m_{j+1}),m^F_{j+1})}{\mathcal{F}_j},\]
where $\mathcal{F}_j=\sigma\left((X_0^i,G^{i}_k,\alpha^{i}_k)_{i\in \llbracket 1, N_p\rrbracket,k\leq j}\right)$. The conditional expectation is approximated as follow, letting $(f_\omega)_{\omega \in \mathcal{X}}$ be a parametrized family of functions on some space $\mathcal{X}$, for fixed $j$ we approximate $(U^{i}_j)_{i\in \llbracket 1, N_p\rrbracket}$ with 
\[\forall i \in \{1,\cdots N_p\}, \quad U^{i}_j\approx f_{\omega^*}(X^{i}_j), \]
where 
\[\omega^*=\text{arginf}_{\omega \in \mathcal{X}} J(\omega),\]
for 
\begin{gather*}J(\omega)=\frac{1}{N_p}\sum_k \left|U^{k}_{j+1}+\Delta_t G(X^{k}_{j+1},F_{inv}(X^{k}_{j+1},\alpha^{k}_{j+1},m_{j+1}),m^F_{j+1})-f_\omega (X^{k}_j)\right|^2.\end{gather*}
Clearly this is based on the idea that for two random variables $X,Y\in \mathcal{H}$, 
\[\espcond{Y}{X}=\inf_{\text{measurable} f} \esp{|Y-f(X)|^2}.\]
The parametrized family $(f_\omega)_{\omega\in \mathcal{X}}$ can range from neural networks to any sufficiently rich regression method, for general results on regression based methods for BSDEs we refer to \cite{gobet-regression-BSDE}. Now that we have explained how the trajectories are calculated we introduce the following discretized operator 
\begin{gather*}v^{N_t,N_p}((X_0^i,G^{i}_j)_{(i,j)\in \llbracket 1, N_p\rrbracket\times \llbracket 1, N_t\rrbracket}):\\
    \FuncDef{\reels^{ N_p\times N_t\times d}}{\reels^{N_p\times N_t\times d}}{(\alpha^{i}_j)_{(i,j)\in \llbracket 1, N_p\rrbracket\times \llbracket 1, N_t\rrbracket}}{\left(F_{inv}(X^{i}_j,\alpha^{i}_j,m_j)-U^{i}_j\right)_{(i,j)\in \llbracket 1, N_p\rrbracket\times \llbracket 1, N_t\rrbracket}}.
\end{gather*}
We can now introduce the associated algorithm to solve the FBSDE \eqref{FBSDE}
\begin{algorithm}[H]
    \caption{Monotone FBSDE solver}\label{algo FBSDE}
    \begin{algorithmic}
        \label{algo 1}
        \STATE fix an error level $\varepsilon_0$. 
        \STATE fix a decreasing sequence of step $(\gamma_n)_{n\in \mathbb{N}}\subset \reels^+$. 
        \STATE $N_{iter}\leftarrow 0$
        \STATE Simulate $N_p$ independent realizations $(X_0^i)_{i\in \llbracket 1,N_p\rrbracket }$ of $X_0$. 
        \STATE Simulate $N_p\times N_t$ realizations $(G^{i,j})_{(i,j)\in \llbracket 1, N_p\rrbracket\times \llbracket 1, N_t\rrbracket}$ from independant gaussian random variable $G^{i,j}\sim \mathcal{N}(0,\sqrt{\Delta_t}I_d)$.
        \STATE Choose an initial control $(\alpha^{i}_j)_{(i,j)\in \llbracket 1, N_p\rrbracket\times \llbracket 1, N_t\rrbracket}$ progressively adapted. 
        \STATE set $\bar{\alpha}=\alpha$.
        \STATE $\varepsilon_{error}=\|v^{N^t,N_p}((X_0^i,G^{i}_j)_{(i,j)\in \llbracket 1, N_p\rrbracket\times \llbracket 1, N_t\rrbracket})(\bar{\alpha})\|_T$
        \STATE Initialize $Y=\frac{1}{\gamma_1}\alpha$
        \WHILE{$\varepsilon_{error}>\varepsilon_0$}
            \STATE $N_{iter}\leftarrow N_{iter}+1$
            \STATE $\alpha\leftarrow \alpha -\gamma_{N_{iter}} v^{N_t,N_p}((X_0^i,G^{i,j})_{(i,j)\in \llbracket 1, N_p\rrbracket\times \llbracket 1, N_t\rrbracket})(\alpha)$
            \STATE $\bar{\alpha}\leftarrow (1-\frac{1}{N_{iter}+1})\bar{\alpha}+\frac{1}{N_{iter}+1}\alpha$
            \STATE $Y\leftarrow Y-v^{N_t,N_p}((X_0^i,G^{i}_j)_{(i,j)\in \llbracket 1, N_p\rrbracket\times \llbracket 1, N_t\rrbracket})(\alpha)$
            \STATE $\alpha\leftarrow \gamma_{N_{iter}} Y$
            \STATE $\varepsilon_{error}\leftarrow \|v^{N^t,N^t}((X_0^i,G^{i}_j)_{(i,j)\in \llbracket 1, N_p\rrbracket\times \llbracket 1, N_t\rrbracket})(\bar{\alpha})\|_T$
        \ENDWHILE
    \STATE Return $\bar{\alpha}$.
    \end{algorithmic}
\end{algorithm}

Where we used the notation 
\[\forall \alpha \in \reels^{N_p\times N_t\times d}\quad \| \alpha\|_T= \sqrt{\frac{\Delta_t}{N_p}\sum_{i,j} \|\alpha^{i}_j\|^2},\]
which is the corresponding $L^2-$norm for discretized random processes. 
\subsubsection{On the convergence rate of the particle method}
We now present some insight on the expected convergence rate of this algorithm to the solution of \eqref{FBSDE}. To that end we first introduce the following particle system 
\begin{lemma}
Under Hypotheses \ref{hyp: lipschitz fbsde} and \ref{hyp: L2 monotonicity}, for any $T>0, N>1$ and for any admissible initial condition $(X_0^1,\cdots X_0^N)$, there exists a unique strong solution to the system 
\begin{equation}
    \label{eq: N particle system}
\left\{
\begin{array}{l}
     \displaystyle X^{i,N}_t=X^i_0-\int_0^t  F(X^{i,N}_s,U^{i,N}_s,m^N_t)ds+\sqrt{2\sigma}B^i_t, \\
    \displaystyle  U^{i,N}_t= U^{i,N}_T+\int_t^T G(X^{i,N}_s,U^{i,N}_s,m^N_s)ds-\int_t^T Z^{i,N}_sd(B^1_s,\cdots B^N_s),\\
    U^{i,N}_T=g(X^{i,N}_T,\frac{1}{N} \sum_j\delta_{X^{j,N}_T}),\\
    m^N_t=\frac{1}{N}\sum_j \delta_{X^{j,N}_t,U^{j,N}_t}.
\end{array}
\right.
\end{equation}
Where $(B^1_t,\cdots B^N_t)_{t\geq 0}$ is a $Nd-$dimensional Brownian motion.
\end{lemma}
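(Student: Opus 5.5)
The plan is to recast the $N$-particle system as a single FBSDE without mean field interaction, posed in the larger state space $\reels^{Nd}$, and to check that its coefficients are Lipschitz and monotone in the same sense as Hypothesis \ref{hyp: L2 monotonicity}. Wellposedness then follows exactly as in Theorem \ref{thm: existence fbsde} and Lemma \ref{lemma: wellposedness fbsde strong monotonicity on F}.

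\textbf{Step 1: stacking the system.} Write $\mathbf{X}=(x^1,\dots,x^N)$, $\mathbf{U}=(u^1,\dots,u^N)$, and let $m^N_{\mathbf{X},\mathbf{U}}=\frac{1}{N}\sum_j\delta_{(x^j,u^j)}$. Define
\[\mathbf{F}(\mathbf{X},\mathbf{U})=\left(F(x^i,u^i,m^N_{\mathbf{X},\mathbf{U}})\right)_{i\le N},\]
and define $\mathbf{G},\mathbf{g}$ analogously. Then \eqref{eq: N particle system} is a classical coupled FBSDE driven by the $Nd$-dimensional Brownian motion $(B^1,\dots,B^N)$.

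\textbf{Step 2: Lipschitz regularity.} The map $(\mathbf{X},\mathbf{U})\mapsto m^N$ is Lipschitz into $(\mathcal{P}_2,\mathcal{W}_2)$. Indeed, coupling the two empirical measures index by index gives
\[\mathcal{W}_2(m^N,m'^N)^2\le \frac{1}{N}\sum_j\left(|x^j-x'^j|^2+|u^j-u'^j|^2\right).\]
Combined with Hypothesis \ref{hyp: lipschitz fbsde}, this makes $\mathbf{F},\mathbf{G},\mathbf{g}$ Lipschitz on $\reels^{2Nd}$. Consequently the system is wellposed on a short horizon by \cite{lipschitz-sol}.

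\textbf{Step 3: monotonicity, the key step.} I need to transfer the $L^2$-monotonicity to the finite-dimensional weighted inner product $\frac1N\sum_i$. Fix deterministic points $(x^i,u^i),(y^i,w^i)$. Take a uniform random index $I$ on $\llbracket1,N\rrbracket$ on some atomless extension of the probability space, and set $X=x^I$, $U=u^I$, $Y=y^I$, $V=w^I$. Then $\mathcal{L}(X,U)=m^N_{\mathbf{X},\mathbf{U}}$ and $\mathcal{L}(Y,V)=m^N_{\mathbf{Y},\mathbf{W}}$. Hypothesis \ref{hyp: L2 monotonicity} therefore yields
\[\frac1N\sum_i\left(\mathbf{F}^i-\mathbf{F}'^i\right)\cdot(u^i-w^i)+\left(\mathbf{G}^i-\mathbf{G}'^i\right)\cdot(x^i-y^i)\ge \frac{c_F}{N}\sum_i|u^i-w^i|^2,\]
where $\mathbf{F}^i,\mathbf{F}'^i$ denote the $i$-th components evaluated at the two configurations. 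The same construction gives monotonicity of $\mathbf{g}$. Since these are pointwise inequalities in $\reels^{2Nd}$, they hold $\omega$ by $\omega$ when evaluated along random processes, and hence in expectation. The main subtlety is that the Hilbert space $\mathcal{H}$ must be rich enough to carry a uniform index independent of everything else. If one prefers to avoid enlarging the space, the inequality can instead be deduced from the measure-level formulation of monotonicity, as already used in the proof with common noise.

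\textbf{Step 4: conclusion.} The stacked system is therefore Lipschitz and strongly monotone in $\mathbf{U}$, with the terminal condition monotone.

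Uniqueness follows verbatim from the proof of Theorem \ref{thm: existence fbsde}. Apply Itô's lemma to $(\mathbf{U}^1-\mathbf{U}^2)\cdot(\mathbf{X}^1-\mathbf{X}^2)$ to get $\int_0^T\esp{|\mathbf{U}^1_t-\mathbf{U}^2_t|^2}dt=0$, and then use Gronwall's lemma for the forward component.

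For existence, I would use the standard continuation method (Peng–Wu, or \cite{bensoussan}). The stability estimate it requires comes from the same monotonicity-plus-Gronwall computation as Lemma \ref{lemma: error in function of v(alpha)} and Remark \ref{remarque: strong stability of fbsde}. Alternatively, one can invoke \cite{monotone-sol-meynard} Lemma 3.21 directly, since the stacked coefficients satisfy its hypotheses. Either route extends the short-time solution to an arbitrary horizon $T$.
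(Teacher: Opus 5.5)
Your proposal is correct and matches the paper's proof. Like the paper, you stack the particles into a non-mean-field FBSDE on $\reels^{Nd}$ with coefficients $\mathbf{F},\mathbf{G},\mathbf{g}$, check that these are Lipschitz and monotone (strongly in $\mathbf{U}$, with $\mathbf{g}$ monotone), and conclude by the wellposedness of monotone FBSDEs as in Theorem \ref{thm: existence fbsde}. Your uniform-random-index argument usefully makes explicit the transfer of $L^2$-monotonicity to the stacked coefficients, which the paper only asserts; no enlargement of the space is needed, since such an index can be built from any Brownian increment already defined on $\Omega$.
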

\begin{proof}
For any $\mathbf{x},\mathbf{u}\in \reels^{Nd}$ 
\[\mathbf{x}=(x^1,\cdots, x^n), \quad x^i\in \reels^d \quad \forall i\in \llbracket 1,N\rrbracket,\]
we introduce the notation 
\[\mathbf{F}(\mathbf{x},\mathbf{u})=\left(F\left(x^i,u^i,\frac{1}{N}\sum_j \delta_{x^j,u^j}\right)\right)_{i\in \llbracket 1,N\rrbracket},\]
and an analogous notation for $g$ and $G$. Similarly for stochastic processes we denotes 
\[\mathbf{X}^N_t=(X^{1,N}_t,\cdots X^{N,N}_t),\]
and so on. It follows that the sytem \eqref{eq: N particle system} is equivalent to 
\begin{equation*}
\left\{
\begin{array}{l}
     \displaystyle \mathbf{X}^N_t=\mathbf{X}_0-\int_0^t  \mathbf{F}(\mathbf{X}^N_s,\mathbf{U}^N_s)ds+\sqrt{2\sigma}\mathbf{B}_t, \\
    \displaystyle  \mathbf{U}^N_t= \mathbf{g}(\mathbf{X}^N_T)+\int_t^T \mathbf{G}(\mathbf{X}^N_s,\mathbf{U}^N_s)ds-\int_t^T \mathbf{Z}_sd\mathbf{B}_s.\\
\end{array}
\right.
\end{equation*}
Moreover, by the regularity and monotonicity of $F,G,g$, $\mathbf{F},\mathbf{G},\mathbf{g}$ are Lipschitz and 
\begin{gather*}
\forall \mathbf{x},\mathbf{y},\mathbf{u},\mathbf{v}\in \reels^{Nd},\\
(\mathbf{G}(\mathbf{x},\mathbf{u})-\mathbf{G}(\mathbf{y},\mathbf{v}))\cdot(\mathbf{x}-\mathbf{y})+(\mathbf{F}(\mathbf{x},\mathbf{u})-\mathbf{F}(\mathbf{y},\mathbf{v}))\cdot(\mathbf{u}-\mathbf{v})\geq c_F |\mathbf{u}-\mathbf{v}|^2,\\
\mathbf{g}(\mathbf{x})-\mathbf{g}(\mathbf{y})\cdot(\mathbf{x}-\mathbf{y})\geq 0.
\end{gather*}
Since coefficients are Lipschitz and monotone, the existence and uniqueness of a strong solution are a consequence of Theorem \ref{thm: existence fbsde}.
\end{proof}
Indicating by $\|\cdot \|_{k,p}$ the norm on the Sobolev space $W^{k,p}(\reels^{2d})$ and by $\|\cdot\|_{-k,p}$ the associated negative Sobolev norm, we can now show the following theorem estimating the error of our algorithm
\begin{thm}
    \label{thm: approximation fbsde}
Under Hypotheses \ref{hyp: lipschitz fbsde} and \ref{hyp: L2 monotonicity}, fix $N\geq 1$ and let $(X_0^1,\cdots X_0^N)$ an admissible initial condition, such that the $(X_0^i)_{i\in \llbracket 1,N\rrbracket }$ are iid. We fix $\mathbf{\alpha}=(\alpha^1,\cdots \alpha^N)\in \left(\mathcal{H}^T\right)^{Nd}$ a control and consider the dynamics 
\begin{equation}
    \label{eq: N particle controlled system}
\left\{
\begin{array}{l}
     \displaystyle X^{\alpha,i}_t=X^i_0-\int_0^t  \alpha^i_sds+\sqrt{2\sigma}B^i_t, \\
    \displaystyle  U^{\alpha,i}_t= U^\alpha_T+\int_t^T G(X^{\alpha,i}_s,F_{inv}(X^{\alpha,i}_s,\alpha^i_s,m^\alpha_s),\nu^\alpha_s)ds-\int_t^T Z^{\alpha,i}_sd(B^1_s,\cdots B^N_s),\\
    U^\alpha_T=g(X^{\alpha,i}_T,\frac{1}{N} \sum_j\delta_{X^{\alpha,j}_T}),\\
    m^\alpha_t=\frac{1}{N}\sum_j \delta_{(X^{\alpha,j}_t,\alpha^i_t)},\\
    \nu^\alpha_t=\frac{1}{N}\sum_j \delta_{(X^{\alpha,j}_t,F_{inv}(X^{\alpha,j}_t,\alpha^j_t,m^\alpha_t))}.
\end{array}
\right.
\end{equation}
 Letting 
    \[\forall t\in [0,T],\quad v(\alpha^i)_t=F_{inv}(X^{\alpha,i}_t,\alpha^i_t,m^\alpha_t)-U^{\alpha,i}_t,\]
    and $(X^i_t,U^i_t,Z^i_t)_{t\in [0,T]}$ be the unique strong solution of 
\begin{equation*}
\left\{
\begin{array}{l}
     \displaystyle X^{i}_t=X^i_0-\int_0^t  F(X^{i}_s,U^{i}_s,m^i_t)ds+\sqrt{2\sigma}B^i_t, \\
    \displaystyle  U^{i}_t= g(X^{i}_T,\mathcal{L}(X^i_T))+\int_t^T G(X^{i}_s,U^{i}_s,m^i_s)ds-\int_t^T Z^{i}_sdB^i_s,\\
    m^i_t=\mathcal{L}(X^i_t,U^i_t).
\end{array}
\right.
\end{equation*}
    there exists a constant $C_{coef}$ depending only on $c_F,T,\|F,G,g\|_{Lip}$ such that 
    \begin{enumerate}
        \item[-] If for some $q>2$ 
        \[\esp{|X_0^1|^q}<+\infty,\]
        then there exists a constant $C_{fg}$ depending on $q,d$ only such that for any $i\in \llbracket 1,N\rrbracket$
\begin{gather*} \esp{\sup_{[0,T]}|X^{\alpha,i}_t-X^{i}_t|^2+ \sup_{[0,T]}|U^{\alpha,i}_t-U^{i}_t|^2}\\\leq C_{coef}\left(\|v(\alpha^i)\|_T^2+\frac{1}{N}\sum_j \|v(\alpha^j)\|_T^2+C_{fg}\left(\esp{|X_0^1|^q}\right)^\frac{2}{q}\mathcal{E}(N,q,d)\right),\end{gather*}
with 
\[\mathcal{E}(N,q,d)=\left\{
\begin{array}{l}
N^{-1/2}+N^{-(q-2)/q} \text{ if } d<2 \text{ and } q\neq 4\\
N^{-1/2}\text{log}(1+N)+N^{-(q-2)/q} \text{ if } d=2 \text{ and } q\neq 4\\
N^{-1/d}+N^{-(q-2)/q} \text{ if } d>2 \text{ and } q\neq \frac{d}{d-1}   
\end{array}
\right.\]
\item[-] If $F,G,g$ admit derivatives with respect to the measure argument up to the second order such that for $f=(F,G,g)$
\[\|f\|_{Lip},\|\frac{\delta f}{\delta m}\|_{Lip}\|\frac{\delta^2 f}{\delta m^2}\|_{Lip}\leq C_{Lip,2},\]
then there exists a constant $C_{smooth}$ depending on $C_{Lip,2},d,T$ and $m=\mathcal{L}(X_0^1)$ such that 
\begin{gather*}\esp{\sup_{[0,T]}|X^{\alpha,i}_t-X^{i}_t|^2+ \sup_{[0,T]}|U^{\alpha,i}_t-U^{i}_t|^2}\\\leq C_{coef}\left(\|v(\alpha^i)\|_T^2+\frac{1}{N}\sum_j \|v(\alpha^j)\|_T^2+\frac{C_{smooth}}{N}\right),\end{gather*}
\item[-] If $F,G,g$ admit a first order derivative with respect to the measure argument and there exists a constant $C_{reg}$, such that for $f=F,G,g$
\[\forall (x,m)\in \reels^{2d}\times \mathcal{P}_2(\reels^{2d}), \quad \sup_{\begin{array}{c} x\in \reels^{2d}\\ m\in \mathcal{P}_2(\reels^{2d})\end{array}}\|\frac{\delta f}{\delta m}(x,m,\cdot)\|_{(2+d,2(d+1))}\leq C_{reg},\]
and \[\esp{|X_0^1|^{8(d+1)}}<C_{reg},\]
then there exists a constant $C_{sobolev}$ depending on $C_{reg}$, $\|(F,G,g)\|_{Lip},T,c_F$ and $d$ such that 
\begin{gather*}\esp{\sup_{[0,T]}|X^{\alpha,i}_t-X^{i}_t|^2+ \sup_{[0,T]}|U^{\alpha,i}_t-U^{i}_t|^2}\\
    \leq C_{coef}\left(\|v(\alpha^i)\|_T^2+\frac{1}{N}\sum_j \|v(\alpha^j)\|_T^2+\frac{C_{sobolev}}{N}\right),\end{gather*}
\end{enumerate}

\end{thm}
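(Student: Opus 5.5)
The plan is to split the error by the triangle inequality through the $N$-particle system \eqref{eq: N particle system} started from the same $(X_0^i)$ and driven by the same Brownian motions $(B^i)$:
\[
X^{\alpha,i}-X^i=(X^{\alpha,i}-X^{i,N})+(X^{i,N}-X^i),
\]
and similarly for $U$. The first difference is an \emph{optimization error} for a finite-dimensional monotone FBSDE. The second is a \emph{propagation of chaos} error for the exact particle system, which does not depend on $\alpha$. These give, respectively, the $\|v(\alpha^i)\|_T^2+\frac1N\sum_j\|v(\alpha^j)\|_T^2$ terms and the $N$-dependent terms in each of the three cases.

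For the first term I would mimic Lemma \ref{lemma: error in function of v(alpha)} on $\reels^{Nd}$. By definition of $v(\alpha^i)$ and of $F_{inv}$ as the inverse of the lift (here the empirical lift), we have $\alpha^i_t=F(X^{\alpha,i}_t,U^{\alpha,i}_t+v(\alpha^i)_t,\nu^\alpha_t)$. Hence $(\mathbf{X}^\alpha,\mathbf{U}^\alpha)$ solves the perturbed system \eqref{eq: perturbized system} for the coefficients $(\mathbf{F},\mathbf{G},\mathbf{g})$, with perturbations $\eta^x,\eta^u$ bounded, by Lipschitz continuity, by $|v(\alpha^i)_t|+\big(\frac1N\sum_j|v(\alpha^j)_t|^2\big)^{1/2}$. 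The latter term appears because $\nu^\alpha_t$ differs from $\frac1N\sum_j\delta_{(X^{\alpha,j}_t,U^{\alpha,j}_t)}$ by at most that amount in $\mathcal{W}_2$. Since $(\mathbf{F},\mathbf{G},\mathbf{g})$ are monotone with constant $c_F$ (shown in the proof of the preceding lemma), Remark \ref{remarque: strong stability of fbsde} applies in $\reels^{Nd}$.

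This stability estimate only gives a bound on the summed quantity $\sum_i$. To get a bound for a single index $i$, I would use it in two passes. First, the summed estimate controls $\frac1N\sum_j\esp{\sup|X^{\alpha,j}-X^{j,N}|^2+\sup|U^{\alpha,j}-U^{j,N}|^2}$, and with it the $\mathcal{W}_2$ distance between the empirical measures. Second, I would treat the $i$-th equation as a decoupled Lipschitz FBSDE in which the measure arguments are now controlled, and apply the same Itô and Gronwall argument to the $i$-th component alone. This yields $C_{coef}\big(\|v(\alpha^i)\|_T^2+\frac1N\sum_j\|v(\alpha^j)\|_T^2\big)$. The constants stay independent of $N$ because the empirical Lipschitz constants of $\mathbf{F},\mathbf{G},\mathbf{g}$ are uniform in $N$ under the $\frac1N$-weighted norm.

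For the second term I would rely on the same stability, now comparing the exact particle system with the i.i.d. copies $(X^i,U^i)$. These copies solve the particle system up to the perturbation $F(X^i,U^i,m_t)-F(X^i,U^i,\mu^N_t)$, where $\mu^N_t=\frac1N\sum_j\delta_{(X^j_t,U^j_t)}$, and similarly for $G$ and $g$. The stability estimate then reduces everything to $\esp{\mathcal{W}_2^2(\mu^N_t,m_t)}$ integrated in time, plus the terminal term.

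The three cases then correspond to three ways of bounding this quantity. In the first case I would apply the Fournier–Guillin bound for i.i.d. samples. This needs $q$-moments of $(X^i_t,U^i_t)$, which follow from $\esp{|X_0|^q}$ by Lipschitz growth and the Lipschitz decoupling field. In the second case I would expand $f(x,\mu^N)-f(x,m)$ to second order in the flat derivative: the first-order term is centered and has variance $O(1/N)$, and the remainder is $O(1/N)$. In the third case I would use the Sobolev-dual estimate $\|\mu^N-m\|_{-(2+d),2(d+1)}$ together with the moment condition.

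I expect the main obstacle to be this second-term step. The stability estimate requires the perturbations to be compared in $L^2$ across the whole coupled system, so replacing $\mathcal{W}_2$ by a weak-norm or derivative expansion must be carried out inside the perturbation term rather than on the distance itself. It also requires uniform moment bounds on $U^i$, which I would obtain from the Lipschitz decoupling field of Theorem \ref{thm: existence fbsde}.
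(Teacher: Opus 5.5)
Your proposal is correct and follows essentially the same route as the paper. Both arguments use the triangle inequality through the exact $N$-particle system and rewrite the drift as $\alpha^i_t=F(X^{\alpha,i}_t,U^{\alpha,i}_t+v(\alpha^i)_t,\nu^\alpha_t)$. This is fed into the strong stability estimate, first summed over particles and then for a single index. Finally, the perturbations between the i.i.d.\ copies and the particle system are bounded by Fournier--Guillin, a flat-derivative expansion, or a negative Sobolev norm.

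One detail the paper makes explicit and you leave implicit is the leave-one-out measure $\mu^i_t=\frac{1}{N-1}\sum_{j\neq i}\delta_{(X^j_t,U^j_t)}$. Since $(X^i_t,U^i_t)$ is itself an atom of $\mu^N_t$, one first pays an $O(1/N)$ cost in expectation for $\mathcal{W}_2^2(\mu^N_t,\mu^i_t)$. One then uses the independence of $\mu^i_t$ from $(X^i_t,U^i_t)$ to reduce to a bound at fixed $(x,u)$, which is what justifies your claim that the first-order term is centered.
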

\begin{proof}
    \quad

\flushleft\textit{Step 1: a stability estimate}\\
    By definition of $F_{inv}$ 
    \[\alpha^i_t= F(X^{\alpha,i}_t,U^{\alpha,i}_t+v(\alpha^i)_t,\nu^\alpha_t).\]
    Moreover $(\nu^\alpha_t)_{t\in [0,T]}$ can be rewritten in function of $(v(\alpha^j)_t)_{(t,j)\in [0,T]\times \llbracket 1,N\rrbracket}$ and $(U^{\alpha,j}_t)_{(t,j)\in [0,T]\times \llbracket 1,N\rrbracket}$ as 
    \[\forall t\in [0,T], \quad \nu^\alpha_t=\frac{1}{N}\sum_j \delta_{(X^{\alpha,j}_t,U^{\alpha,j}_t+v(\alpha^j)_t)}.\]
    Consequently the system \eqref{eq: N particle controlled system} is equivalent to 
    \begin{equation*}
\left\{
\begin{array}{l}
     \displaystyle X^{\alpha,i}_t=X^i_0-\int_0^t  F(X^{\alpha,i}_s,U^{\alpha,i}_s+v(\alpha^i)_s,\nu^\alpha_s)ds+\sqrt{2\sigma}B^i_t, \\
    \displaystyle  U^{\alpha,i}_t= U^\alpha_T+\int_t^T G(X^{\alpha,i}_s,U^\alpha_s+v(\alpha^i)_s,\nu^\alpha_s)ds-\int_t^T Z^{\alpha,i}_sd(B^1_s,\cdots B^N_s),\\
    U^\alpha_T=g(X^{\alpha,i}_T,\frac{1}{N} \sum_j\delta_{X^{\alpha,j}_T}),\\
    \nu^\alpha_t=\frac{1}{N}\sum_j \delta_{(X^{\alpha,j}_t,U^{\alpha,j}_t+v(\alpha^j)_t)}.
\end{array}
\right.
\end{equation*}
By a direct variation of Lemma \ref{lemma: error in function of v(alpha)} applied to the system \eqref{eq: N particle system}, we deduce that there exists a constant $C_1$ such that 
\[\esp{\sum_j \sup_{[0,T]}|X^{\alpha,j}_t-X^{j,N}_t|^2+\sum_j \sup_{[0,T]}|U^{\alpha,j}_t-U^{j,N}_t|^2}\leq C_1\sum_j \|v(\alpha^j)\|_T^2.\]
Then, in light of this estimate and by applying the same stability lemma to compare only two particle we deduce that for any $i\in \llbracket 1,N\rrbracket$
\[\esp{\sup_{[0,T]}|X^{\alpha,i}_t-X^{i,N}_t|^2+ \sup_{[0,T]}|U^{\alpha,i}_t-U^{i,N}_t|^2}\leq C_1\left(\|v(\alpha^i)\|_T^2+\frac{1}{N}\sum_j \|v(\alpha^j)\|_T^2\right).\]
We now introduce the following FBSDE system 
\begin{equation}
    \label{eq: N particle limit system}
\left\{
\begin{array}{l}
     \displaystyle X^{i}_t=X^i_0-\int_0^t  F(X^{i}_s,U^{i}_s,m^i_t)ds+\sqrt{2\sigma}B^i_t, \\
    \displaystyle  U^{i}_t= g(X^{i}_T,\mathcal{L}(X^i_T))+\int_t^T G(X^{i}_s,U^{i}_s,m^i_s)ds-\int_t^T Z^{i}_sdB^i_s,\\
    m^i_t=\mathcal{L}(X^i_t,U^i_t).
\end{array}
\right.
\end{equation}
The wellposedness of this system for any admissible condition has already been established in Theorem \ref{thm: convergence fbsde}. In particular 
\[\forall i,j\in \llbracket 1,N\rrbracket^2, \quad \forall t\in [0,T],\quad \mathcal{W}_2(m^i_t,m^j_t)=0, \]
and we may drop the indice by considering only $(m_t)_{t\in [0,T]}=(m^1_t)_{t\in [0,T]}$. Introducing 
\[\forall t\in [0,T], \quad \mu^N_t=\frac{1}{N}\sum_j \delta_{X^j_t,U^j_t},\]
and 
\[
\forall t\in [0,T],\quad  \left\{\begin{array}{l}
    \eta^{x,i}_t=F(X^i_t,U^i_t,m_t)-F(X^i_t,U^i_t,\mu^N_t),\\
    \eta^{u,i}_t=G(X^i_t,U^i_t,m_t)-G(X^i_t,U^i_t,\mu^N_t),\\
    \gamma^i_T=g(X^i_T,\mathcal{L}(X^1_T))-g(X^i_T,\frac{1}{N}\sum_j \delta_{X^j_T}),
\end{array}\right.\]
We deduce by Remark \ref{remarque: strong stability of fbsde} that there exists a constant $C_2$ such that
\begin{gather*}\esp{\sum_j \sup_{[0,T]}|X^{j}_t-X^{j,N}_t|^2+\sum_j \sup_{[0,T]}|U^{j}_t-U^{j,N}_t|^2}\\\leq C_2\sum_j \left(\underbrace{\|\gamma_T^j\|^2+\int_0^T \|(\eta^{x,j}_t,\eta^{u,j}_t)\|^2dt}_{\varepsilon_j}\right).\end{gather*}
Then using this newfound bound to estimate the difference between two particles we find 
\[\forall i\in \llbracket 1,N\rrbracket, \quad \esp{ \sup_{[0,T]}|X^{i}_t-X^{i,N}_t|^2+\sup_{[0,T]}|U^{i}_t-U^{i,N}_t|^2}\leq C_2\left(\varepsilon_i+\frac{1}{N}\sum_j \varepsilon_j\right).\]
Putting together those estimates by using Cauchy-Schwartz inequality finaly yields that for any $i\in \llbracket 1,N\rrbracket,$
\begin{gather}
    \label{eq: estimating difference particles}
    \esp{\sup_{[0,T]}|X^{\alpha,i}_t-X^{i}_t|^2+ \sup_{[0,T]}|U^{\alpha,i}_t-U^{i}_t|^2}\\
    \nonumber \leq C\left(\|v(\alpha^i)\|_T^2+\varepsilon_i+\frac{1}{N}\sum_j \left(\|v(\alpha^j)\|_T^2+\varepsilon_j\right)\right),
\end{gather}
where the constant $C$ is given by following the proof of Lemma \ref{lemma: error in function of v(alpha)}. In particular it depends only on $T,c_F, \|G,F,g\|_{Lip}$. 

\flushleft\textit{Step 2: bounding the error terms in function of N}\\

At this point, the first statement follows directly by noticing that for any $i \in \llbracket 1,N\rrbracket$
\[ \varepsilon_i\leq C_{Lip}\left(\esp{\mathcal{W}_2^2(\frac{1}{N}\sum_j \delta_{X^j_T},\mathcal{L}(X^1_T))}+\int_0^T\esp{\mathcal{W}_2^2(\mu^N_t,\mathcal{L}(X^1_t,U^1_t))}dt\right),\]
where $C_{Lip}$ depends only on the Lipschitz constant of $F,G,g$ and applying the sharp estimate \cite{Fournieretal2014} Theorem 1 for the Wasserstein$-2$ distance and in dimension $2d$. The second and third statement are a consequence of the fact that whenever coefficients are sufficiently smooth, the errors terms $(\varepsilon_j)_{j\in \llbracket 1,N\rrbracket }$ can be estimated with a better rate in function of $N$. Let us fix $(t,i)\in [0,T]\times \llbracket 1,N\rrbracket$, we remind that 
\[\eta^{x,i}_t=F(X^i_t,U^i_t,m_t)-F(X^i_t,U^i_t,\mu^N_t).\]
Introducing 
\[\mu^i_t=\frac{1}{N-1}\sum_{j\neq i} \delta_{(X^j_t,U^j_t)},\]
we deduce that 
\[|\eta^{x,i}_t|^2\leq 2\left(F(X^i_t,U^i_t,m_t)-F(X^i_t,U^i_t,\mu^i_t)\right)^2+2\left(F(X^i_t,U^i_t,\mu^N_t)-F(X^i_t,U^i_t,\mu^i_t)\right)^2.\]
Moreover since $F$ is Lipschitz in $\mathcal{P}_2(\reels^{2d})$, 
\[\left(F(X^i_t,U^i_t,\mu^N_t)-F(X^i_t,U^i_t,\mu^i_t)\right)^2\leq \|F\|^2_{Lip}\mathcal{W}_2^2(\mu^N_t,\mu^i_t), \quad a.s,\]
and by definition of the Wasserstein$-2$ distance 
\[\mathcal{W}_2^2(\mu^N_t,\mu^i_t)\leq \frac{1}{N(N-1)}\sum_{j\neq i}\left|(X^i_t,U^i_t)-(X^j_t,U^j_t)\right|^2, \quad a.s.\]
Consequently 
\[\esp{\left(F(X^i_t,U^i_t,\mu^N_t)-F(X^i_t,U^i_t,\mu^i_t)\right)^2}\leq \frac{2}{N}\esp{|(X^1_t,U^1_t)|^2}.\]
It remains to show that 
\[\esp{\left(F(X^i_t,U^i_t,m_t)-F(X^i_t,U^i_t,\mu^i_t)\right)^2},\]
can be bounded with rate $\frac{1}{N}$. To that end we first notice that $\mu^i_t$ is independent of $(X^i_t,U^i_t)$, as a consequence it is sufficient to show that we can bound
\[\esp{\left(F(x,u,m_t)-F(x,u,\mu^i_t)\right)^2},\]
by a factor $\frac{1}{N}$ uniformly in $(x,u)\in \reels^{2d}$. The second statement is then a direct application of \cite{Jourdainetal2021} Corollary 3.3. As for the third statement fixing $(x,u)\in \reels^{2d}$, if $F$ has a continuous first order derivative then 
\[F(x,u,m_t)-F(x,u,\mu^i_t)=\int_0^1 \int_{\reels^{2d}}\frac{\delta F}{\delta m}(x,u,\lambda m_t+(1-\lambda)\mu^i_t,y)(m_t-\mu^i_t)(dy)d\lambda,\quad a.s.\]
By definition of negative sobolev norms, under the assumptions for the third statement, 
\[|F(x,u,m_t)-F(x,u,\mu^i_t)|\leq C_{reg}\|m_t-\mu^i_t\|_{-(2+d),2(d+1)},\quad a.s.\]
It remains to show that in expectation this norm can be estimated with rate $\frac{1}{N}$. If 
\[\esp{|(X^1_t,U^1_t)|^{8(d+1)}}<+\infty,\]
then this result can be found in the proof of Proposition 3.5 of \cite{FernandezMeleard1997}. Since by assumption 
\[\esp{|X^1_0|^{8(d+1)}}<+\infty,\]
this is a consequence of Gronwall Lemma. Indeed by \cite{monotone-sol-meynard} Theorem 3.33 (and its variant in the presence of idiosyncratic noise) there exists a Lipschitz function $U:[0,T]\times \reels^d\times \mathcal{P}_2(\reels^d)\to \reels^d$ such that 
\[\forall t\in [0,T], \quad U^1_t=U(t,X^1_t,\mathcal{L}(X^1_t)).\]
This allows to decouple the forward backward system and conclude with a standard Gronwall type argument. 
\end{proof}
\begin{remarque}
The conditions we give to ensure that the convergence occurs with rate $\frac{1}{\sqrt{N}}$ are not optimal. Rather we just want to stress that with enough regularity with respect to the measure argument it is possible to get back a convergence rate depending mildly (only through constants) on the dimension of the processes. This is especially relevant for Forward-Backward systems since we have to work with measures in $\mathcal{P}(\reels^{2d})$ as soon as the forward process lives in $\reels^d$. 
\end{remarque}
In particular let us insist once again that in the case of MFGs of controls $F_{inv}$ is given directly by $\nabla_\alpha L$, this leads directly to the following corollary 
\begin{corol}
Under Hypotheses \ref{disp monotone mfg} and \ref{hyp: lipschitz regularity mfgc}, fix $N\geq 1$ and let $(X_0^1,\cdots X_0^N)$ be an admissible initial condition, such that the $(X_0^i)_{i\in \llbracket 1,N\rrbracket }$ are iid. We fix $\mathbf{\alpha}=(\alpha^1,\cdots \alpha^N)\in \left(\mathcal{H}^T\right)^{Nd}$ a control and consider the dynamics 
\begin{equation}
\left\{
\begin{array}{l}
     \displaystyle X^{\alpha,i}_t=X^i_0-\int_0^t  \alpha^i_sds+\sqrt{2\sigma}B^i_t, \\
    \displaystyle  U^{\alpha,i}_t= U^\alpha_T+\int_t^T \nabla_x L(X^{\alpha,i}_s,\alpha^i_s,m^\alpha_s)ds-\int_t^T Z^{\alpha,i}_sd(B^1_s,\cdots B^N_s),\\
    U^\alpha_T=g(X^{\alpha,i}_T,\frac{1}{N} \sum_j\delta_{X^{\alpha,j}_T}),\\
    m^\alpha_t=\frac{1}{N}\sum_j \delta_{(X^{\alpha,j}_t,\alpha^i_t)},
\end{array}
\right.
\end{equation} 
    \[\forall t\in [0,T],\quad v(\alpha^i)_t=\nabla_\alpha L(X^{\alpha,i}_t,\alpha^i_t,m^\alpha_t)-U^{\alpha,i}_t,\]
     and $(\alpha^{*,i}_t)_{t\in [0,T]}$ be the unique solution to the mean field game \eqref{mfgc} with initial condition $X^i_0$.
    There exists a constant $C_{coef}$ depending only on $c_L,T,\|\nabla_x L,\nabla_\alpha L,\nabla_x g\|_{Lip}$ such that if for some $q>2$ 
        \[\esp{|X_0^1|^q}<+\infty,\]
        then there exists a constant $C_{fg}$ depending on $q,d$ only such that for any $i\in \llbracket 1,N\rrbracket$
\begin{gather*} \esp{\sup_{[0,T]}|X^{\alpha,i}_t-X^{\alpha^{*,i}}_t|^2+ \sup_{[0,T]}|\alpha^{*,i}_t-\alpha^{i}_t|^2}\\\leq C_{coef}\left(\|v(\alpha^i)\|_T^2+\frac{1}{N}\sum_j \|v(\alpha^j)\|_T^2+C_{fg}\left(\esp{|X_0^1|^q}\right)^\frac{2}{q}\mathcal{E}(N,q,d)\right),\end{gather*}
\end{corol}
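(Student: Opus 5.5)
The plan is to obtain this corollary as a direct specialisation of Theorem \ref{thm: approximation fbsde}, first statement, to the mean field game of controls, and then translate the conclusion on $(X,U)$ into one on the controls. First I identify the coefficients. Following Lemma \ref{lemma: invertibility and wellposedness under disp monotone}, set
\[F(x,u,m)=L_{inv}(x,u,m),\qquad G(x,u,m)=\nabla_x L\big(x,L_{inv}(x,u,m),\cdot\big),\qquad g=\nabla_x g.\]
With these choices the lift $\tilde F$ equals $\nabla_\alpha \tilde L^{-1}$, so the Hilbertian inverse is $F_{inv}=\nabla_\alpha L$. The law $\nu^\alpha_t$ appearing in \eqref{eq: N particle controlled system} is then exactly the empirical measure of $(X^{\alpha,j}_t,\nabla_\alpha L(X^{\alpha,j}_t,\alpha^j_t,m^\alpha_t))$. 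Consequently
\[G\big(X^{\alpha,i}_s,F_{inv}(X^{\alpha,i}_s,\alpha^i_s,m^\alpha_s),\nu^\alpha_s\big)\]
reduces to $\nabla_x L(X^{\alpha,i}_s,\alpha^i_s,m^\alpha_s)$. Hence the particle system of the corollary coincides with \eqref{eq: N particle controlled system}, and the two definitions of $v(\alpha^i)$ agree.

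Next I check the hypotheses. Hypothesis \ref{hyp: lipschitz fbsde} holds because $L_{inv}$ is Lipschitz by Lemma \ref{lemma: invertibility and wellposedness under disp monotone}, and $\nabla_xL$, $\nabla_x g$ are Lipschitz by Hypothesis \ref{hyp: lipschitz regularity mfgc}. For Hypothesis \ref{hyp: L2 monotonicity}, the computation in the proof of that lemma gives monotonicity with lower bound $c_L\|\tilde F(X,U)-\tilde F(Y,V)\|^2$, that is, strong monotonicity in $F$ rather than in $U$. To bridge this gap I would pass to the $L^2$ stability estimate behind Step 1 of Theorem \ref{thm: approximation fbsde}, i.e.\ Lemma \ref{lemma: error in function of v(alpha)} and Remark \ref{remarque: strong stability of fbsde}. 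That argument only uses the monotonicity to control $\int\|\Delta F_s\|^2$, and through Lipschitz continuity of $G$ in $F_{inv}$ this also controls the $G$ error, which is exactly the structure available here (compare Lemma \ref{lemma: wellposedness fbsde strong monotonicity on F} in the appendix).

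The main obstacle is precisely verifying that the constants in this stability lemma survive under the $F$-strong monotonicity. I would redo the Itô computation on $(U-U^\alpha)\cdot(X-X^\alpha)$ with $c_L\int\|\Delta F\|^2$ in place of $c_F\int\|\Delta U\|^2$. Gronwall then bounds $X-X^\alpha$ by $\int\|\Delta F\|^2$, and $U-U^\alpha$ follows from standard BSDE estimates, since the driver difference is Lipschitz in $(X,\Delta F)$. The resulting constants depend only on $c_L,T$ and the Lipschitz norms. The empirical-measure error terms $\varepsilon_j$ of Step 2 are bounded exactly as before through \cite{Fournieretal2014}, which yields the $\mathcal{E}(N,q,d)$ term. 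The needed $q$-th moments of $(X^1_t,U^1_t)$ follow from those of $X^1_0$ via the Lipschitz decoupling field and Gronwall.

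Finally I translate the conclusion to controls. By Theorem \ref{thm: existence mfgc}, $X^i=X^{\alpha^{*,i}}$ and $\alpha^{*,i}_t=\nabla_\alpha\tilde L^{-1}(X^i_t,U^i_t)=F(X^i_t,U^i_t,m_t)$. By the first step, $\alpha^i_t=F(X^{\alpha,i}_t,U^{\alpha,i}_t+v(\alpha^i)_t,\nu^\alpha_t)$, hence
\[
|\alpha^{*,i}_t-\alpha^i_t|\le \|F\|_{Lip}\Big(|X^i_t-X^{\alpha,i}_t|+|U^i_t-U^{\alpha,i}_t|+|v(\alpha^i)_t|+\mathcal W_2(m_t,\nu^\alpha_t)\Big).
\]
I then bound $\mathcal W_2(m_t,\nu^\alpha_t)$ by $\mathcal W_2(m_t,\mu^N_t)$ plus $\big(\tfrac1N\sum_j(|X^j-X^{\alpha,j}|^2+|U^j-U^{\alpha,j}+v(\alpha^j)|^2)\big)^{1/2}$. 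Taking $\sup$ and expectations, the terms are absorbed into the right-hand side of Theorem \ref{thm: approximation fbsde} (summed over $j$ for the averaged term), and the same Wasserstein bound on $\mathcal W_2(m_t,\mu^N_t)$ applies. One caveat: the pointwise-in-time $|v(\alpha^i)_t|^2$ under a $\sup$ is not controlled by $\|v(\alpha^i)\|_T$. I would therefore state the control estimate in the $\|\cdot\|_T$ norm, or assume enough regularity of $v$ in time, and note this adjustment.
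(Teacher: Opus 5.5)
Your route is the paper's. The corollary is stated there with no argument beyond the remark that for MFGs of controls $F_{inv}=\nabla_\alpha L$, so it is meant as a literal instance of the first bullet of Theorem \ref{thm: approximation fbsde}. Your identification of $(F,G,g)$ and your reduction of \eqref{eq: N particle controlled system} are exactly that instance.

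What you add are two repairs the paper skips, and both are warranted.

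First, Hypothesis \ref{hyp: L2 monotonicity} really can fail under Hypotheses \ref{disp monotone mfg} and \ref{hyp: lipschitz regularity mfgc}. Take $g=0$ and $L(x,\alpha,m)=\frac{c}{2}|\alpha|^2+x\cdot\bar\alpha(m)-\alpha\cdot\bar x(m)$, with $\bar x,\bar\alpha$ the means of the two marginals of $m$. The displacement-monotonicity form equals $c\|\Delta\alpha\|^2$, while $\Delta U=c\Delta\alpha-\esp{\Delta X}$. So $\Delta\alpha=0$ with $\Delta U\neq 0$ is possible (take $\Delta X$ a nonzero constant). Hence the theorem does not apply verbatim. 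The $F$-strongly monotone version of Lemma \ref{lemma: error in function of v(alpha)} and Remark \ref{remarque: strong stability of fbsde} that you sketch is genuinely needed. It closes, as you say, because the backward driver depends on $U$ only through $\alpha=F(X,U)$ and $\mathcal{L}(X,\alpha)$.

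Second, the bound on $\esp{\sup_t|\alpha^{*,i}_t-\alpha^i_t|^2}$ is not merely out of reach of this argument; it is false in general. The right-hand side is continuous in $\alpha$ for $\|\cdot\|_T$, since $v$ is Lipschitz. Add to $\alpha^i$ a bump of height $M$ on a time interval of length $\delta=M^{-4}$. The right-hand side stays bounded while the left-hand side grows like $M^2$. Your version, with the control error measured in $\|\cdot\|_T$, is the correct statement.

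A minor point you share with the paper: Fournier--Guillin is applied to $\mathcal{L}(X^1_t,U^1_t)$. Its $q$-th moment is only bounded by $C(1+\esp{|X^1_0|^q})$, with $C$ depending on $T$, $\sigma$ and the coefficients. So the prefactor of $\mathcal{E}(N,q,d)$ is not literally $C_{fg}(q,d)\,(\esp{|X^1_0|^q})^{2/q}$.
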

\begin{remarque}
For MFGs of controls this yields the estimate 
\[\esp{\mathcal{W}^2_2(\frac{1}{N}\sum_j \delta_{(X^{\alpha,j}_t,\alpha^i_t)},\mathcal{L}(X^{\alpha^{*,1}}_t,\alpha^{*,1}_t))}\leq C\left(\frac{1}{N}\sum_i \|v(\alpha^i)\|^2_T+\mathcal{E}(N,q,d)\right). \]
Let us insist that up to the term $\sqrt{\frac{1}{N}\sum_i \|v(\alpha^i)\|^2_T}$, which converges exponentially fast to 0 with the number of iterations of our proposed scheme, we get the same convergence rate as one would for a particle scheme for a standard McKean-Vlasov SDE without any backward component. That is, the term $\mathcal{E}(N,q,d)$ which is of the order $\frac{1}{N^\frac{1}{d}}$ comes from the particle approximation of the measure argument and is not avoidable in general. If coefficients are sufficiently smooth with respect to the measure argument, it is always possible to show that the convergence rate is in fact of the order 
\[\esp{\mathcal{W}^2_2(\frac{1}{N}\sum_j \delta_{(X^{\alpha,j}_t,\alpha^i_t)},\mathcal{L}(X^{\alpha^{*,1}}_t,\alpha^{*,1}_t))}\leq C\left(\frac{1}{N}\sum_i \|v(\alpha^i)\|^2_T+\frac{1}{N}\right), \]
as we demonstrated in Theorem \ref{thm: approximation fbsde}.
\end{remarque}
Let us end this paragraph by adressing the additional error brought by the time discretisation of the problem. First, since the forward process evolves linearly for a given control $(\alpha^i_t)_{(t,i)\in [0,T]\times \llbracket 1,N\rrbracket}$, the scheme we propose is exact forward in time if we see the class of discrete controls we introduced as processes in continuous time but piecewise constant. Then using the formulation of the backward process as a conditional expectation and since coefficients do not depend on martingale part of the BSDE, it is straightforward to obtain that this discretisation is of rate $\Delta_t$. In the best case scenario (that is when coefficients are sufficiently regular with respect to the measure argument), we expect that for a discrete control $(\alpha^j_k)_{j,k\in \llbracket 1,N\rrbracket \times \llbracket 1,N_t\rrbracket}$ the error satisfies
\begin{gather*} 
    \esp{\sup_{k\in \llbracket 1,N_t\rrbracket}|X^{\alpha,i}_k-X^{i}_{k\Delta_t}|^2+\sup_{k\in \llbracket 1,N_t\rrbracket}|U^{\alpha,i}_k-U^{i}_{k\Delta_t}|^2}\\\leq C_{error}\left(\Delta_t^2+\sum_{k}\Delta_t|v(\alpha^i)_k|^2+\frac{1}{N}\sum_j\sum_k \Delta_t|v(\alpha^j)_k|^2+\frac{1}{N}\right),
\end{gather*}
in which case $\Delta_t$ should be chosen such that 
\[N_t\sim \sqrt{N}.\]
Even under this discretisation, we expect that the sequence $(\alpha^n)_{n\in \mathbb{N}}\subset\left(\reels^{N_p\times N_t\times d} \right)^\mathbb{N}$ introduced in Algorithm \ref{algo 1} converge exponentially fast toward $0$ in the space of discrete controls (in practise this is what we observe, and the coefficients associated to the discretised FBSDE in time are still monotone). However since this has no impact on the error coming from the space and time discretisation, the algorithm should be stopped once 
\[\|\alpha^n\|^2_T=\frac{1}{N}\sum_j\sum_k \Delta_t|v(\alpha^j)_k|^2\sim \frac{1}{\sqrt{N_t}}\sim \frac{1}{N}.\]
\subsubsection{FBSDEs with common noise}
Let us briefly explain the main differences in this setting. Since we must be able to compute an approximation of the conditional law at each backward step, this implies that the generation of the discrete idiosyncratic noise $(G^{i}_j)_{i=1:N_p,j=1:N_t}$ and the discrete common noise $(Z^{k}_j)_{k=1:N_0,j=1:N_t}$ must be done independently (where we choose to simulate $N_0$ trajectories of the common noise). A discrete control is now also a collection $(\alpha^{i,k}_j)_{i=1:N_p,j=1:N_t,k=1:N_0}$ which depends on the trajectories of common noise. We now present an algorithm in pseudocode to compute the function 
\[v^{N_t,N_p,N_0}:\reels^{N_p\times N_t\times N_0\times d}
\to\reels^{N_p\times N_t\times N_0\times d},\]
discretization of the associated functional \eqref{eq: v with common noise}
\begin{algorithm}[H]
    \caption{Computation of $v^{N_t,N_p,N_0}$}\label{algo v common}
    \begin{algorithmic}
        \STATE INPUT: $(\alpha^{i,k}_j)_{(i,j,k)\in \llbracket 1, N_p\rrbracket\times \llbracket 1, N_t\rrbracket\times \llbracket 1, N_0\rrbracket}$
        \STATE -Compute the associated forward processes $(X^{i,k}_j,p^{k}_j)_{(i,j,k)\in \llbracket 1, N_p\rrbracket\times \llbracket 1, N_t\rrbracket\times \llbracket 1, N_0\rrbracket}$ iteratively through 
        \[\left\{\begin{array}{l}
            X^{i,k}_{j+1}=X^{i,k}_j+\Delta_t \alpha^{i,k}_j+\sqrt{2\sigma}G^{i}_j,\\
        p^{k}_{j+1}=p^{k}_j+\Delta_t b(p^{k}_j)+\sqrt{2\sigma^0}Z^{k}_j
        \end{array}\right..\]
        \STATE -The backward process $(U^{i,k}_j)_{(i,j,k)\in \llbracket 1, N_p\rrbracket\times \llbracket 1, N_t\rrbracket\times \llbracket 1, N_0\rrbracket}$ is computed recursively as follows: given $(U^{i+1,k}_j)_{(i,j)\in \llbracket 1, N_p\rrbracket\times \llbracket 1, N_t\rrbracket}$, $(U^{i,k}_j)_{(i,j)\in \llbracket 1, N_p\rrbracket\times \llbracket 1, N_t\rrbracket}$ is computed by regression of 
        \[U^{i,k}_{j+1}+\Delta_t G(X^{i,k}_{j+1},p^{k}_{j+1},F_{inv}\left(X^{i,k}_{j+1},p^{k}_{j+1},\alpha^{i,k}_{j+1},m_{j+1}^k\right),\nu^k_{j+1})\]
        into $(X^{i,k}_j,p^{k}_j)_{i=1:N_p,k=1:N_0}$, with the empirical measures 
        \[\left\{\begin{array}{l}m_j^k=\frac{1}{N_p}\sum_l\delta_{(X^{l,k}_j,\alpha^{l,k}_j)},\\
        \nu_j^k=\frac{1}{N_p}\sum_l\delta_{\left(X^{l,k}_j,F_{inv}\left(X^{i,k}_{j},p^{k}_{j},\alpha^{i,k}_{j},m_{j}^k\right)\right)},
        \end{array}\right.\]
    \STATE -Return $\left(F_{inv}\left(X^{i,k}_j,p^{k}_j,\alpha^{i,k}_j,m_j^k\right)-U^{i,k}_j\right)_{(i,j,k)\in \llbracket 1, N_p\rrbracket\times \llbracket 1, N_t\rrbracket\times \llbracket 1, N_0\rrbracket}$.
    \end{algorithmic}
\end{algorithm}
Clearly this algorithm is much more costly compared to the version without common noise. First the memory complexity is now of the order of $O(N_t\times N_p\times N_0)$. Moreover at each backward time step, we compute the regression of a function in $\reels^d\times \reels^{d^0}$
\subsection{Illustration on a FBSDE}
Compared to the deterministic case, the most time consuming step consists in approximating the backward process. Although the method we propose allows to iterate on decoupled FBSDEs, we still need to estimate a conditional expectation at each time step. In the following test, it is estimated with polynomial regression on the first $10$ Hermite polynomials. 
\subsubsection{System considered}
We consider the following exemple 
\begin{equation}
\label{eq: explicit mffbsde numerical}
\left\{
    \begin{array}{c}
        \displaystyle X_t=X_0-\int_0^t aY_sds+\sqrt{2\sigma}B_t,\\
       \displaystyle  Y_t=bX_T+\int_t^T \left(cX_s+\esp{f(X_s-\esp{X_s})}\right)ds-Z_sdB_s,
    \end{array}\right.
\end{equation}
In this case the solution is known explicitely. Indeed Let us first assume that $(Y_s)_{s\in [0,T]}$ can be writen in feedback form as 
\begin{equation}
 \label{eq: backward process in feedback for exemple}   
    \forall t\in [0,T], \quad Y_t=\eta(t)X_t+\theta(t),
\end{equation}
for smooth functions of time $\eta,\theta$. It follows directly that 
\[\bar{X}_t=X_t-\esp{X_t},\]
is given by 
\[\displaystyle \bar{X}_t=\bar{X}_0e^{-a\int_0^t \eta(s)ds }+\sqrt{2\sigma^x}\int_0^t e^{-\int_s^t a\eta(u)du}dW_s.\]
In particular, there exists a continuous function $e:[0,T]\to \reels$ depending on $\eta,a,\mathcal{L}(X_0)$ only such that 
\[\forall t\in [0,T],\quad \esp{f(X_t-\esp{X_t})}=\esp{f(\bar{X}_t)}=e(t).\]
Using the representation formula \eqref{eq: backward process in feedback for exemple}, we deduce that 
\begin{align*}\forall t\in [0,T], \quad 0=&\int_t^T\left((c+\frac{d\eta}{ds}(s)-a \eta^2(s))X_s+e(s)-a\eta(s)\theta(s)+\frac{d\theta}{ds}(s)\right)ds\\
    &+\int_t^T(\sqrt{2\sigma}\eta(s)-Z_s)dW_s.\end{align*}
In the end it suffices to solve the following system 
\[
\left\{
    \begin{array}{l}
        c+\frac{d\eta}{ds}(s)-a \eta^2(s)=0, \quad \eta(T)=b,\\
        e(s)-a\eta(s)\theta(s)+\frac{d\theta}{ds}(s)=0, \quad \theta(T)=0.
    \end{array}
\right.
\]
In particular if $a,b,c\geq 0$ and $f$ is either monotone or Lipschitz with $\|f\|_{Lip}\leq c$, we are in the monotone regime and the system admits a unique solution on any interval of time. Since monotonicity also implies uniqueness of a solution to \eqref{eq: explicit mffbsde numerical}, we have exhibited the unique solution to this mean field FBSDE. For $c=0$
\[
\left\{
    \begin{array}{l}
        \displaystyle \eta(t)=\frac{b}{1+ab (T-t)},\\
        \displaystyle \bar{X}_t=\bar{X}_0\left(1-\frac{ab t}{1+ab T}\right)+\sqrt{2\sigma}(1+ab(T-t))\int_0^t  \frac{1}{1+ab(T-s)}dW_s,\\
        \displaystyle \theta(t)=\frac{1}{1+ab(T-t)}\int_{T-t}^Te(s)(1+ab(T-s))ds\\
        e(t)=\esp{f(\bar{X}_t)}.
    \end{array}
\right.
\]
Choosing $X_0=x\in \reels$, 
\[\bar{X}_t\sim \mathcal{N}\left(0,\frac{2\sigma}{ab}\left(1+ab(T-t)-\frac{(1+ab(T-t))^2}{1+ab T}\right)\right),\]
and $e(t)$ can be computed very accurately with gaussian quadrature. 
\subsubsection{Numerical results}
We now present some numerical result for the system \eqref{eq: explicit mffbsde numerical} for the parameters 
\[
\left\{
\begin{array}{l}
    \sigma=1,\\
    a=1,\\
    b=1,\\
    x_0=1,\\
    c=0,\\
    T=10,
\end{array}
\right. ,
\]
with a step size $\gamma=0.08$ and 
\[\forall x\in \reels, \quad f(x)=atan(x-1).\]
Following the algorithm \ref{algo FBSDE} with $N_t=100,N_p=10000$ and letting 
\[\varepsilon_{error}(n)=\|v^{N_t,N_p}(\alpha_n)\|_T,\]
the sequence of errors on the last iterate, we plot its logarithm in function of the number of iterations
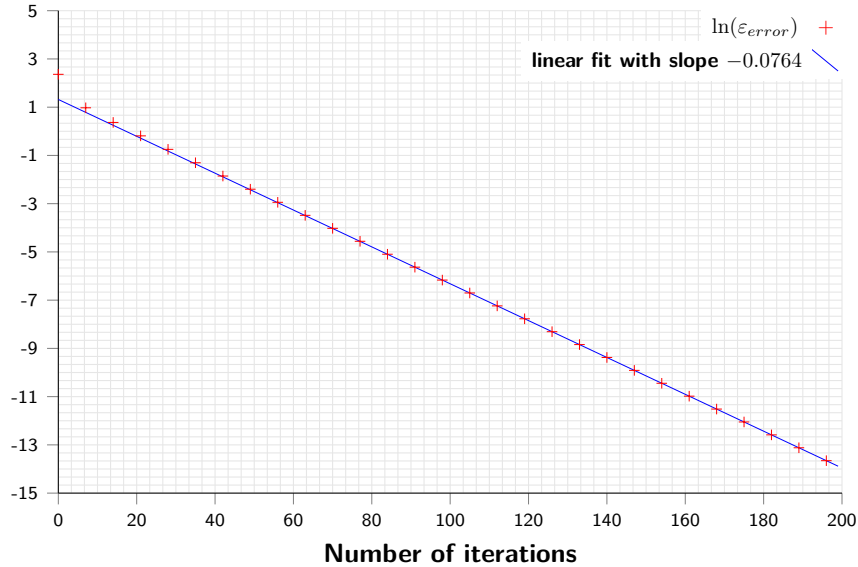
\begin{figure}[H]
    \label{MFFBSDE exemple log error in function of iterations}
\centering
\begin{tikzpicture}[
        font=\bfseries\sffamily,
    ]
    \begin{axis}[
        width=12cm,
        height=8cm,
        at={(0,0)},
        ymin=-15,
        ymax=5,
        xmin=0,
        xmax=200,
        grid=both,
        minor tick num =5,
        minor tick style={draw=none},
        minor grid style={thin,color=black!10},
        major grid style={thin,color=black!10},
        xlabel=Number of iterations,
        tick align=outside,
        axis x line*=bottom,
        axis y line*=none,
        xtick={0,20,...,200},
        extra x tick style={xticklabel style={anchor=north east}, major tick length=0pt},
        ytick={-15,-13,...,5},
        xlabel style={color=black},
        ylabel style={align=center,rotate=-90,color=black},
        x tick label style={
            /pgf/number format/assume math mode, font=\sf\scriptsize},
        y tick label style={
        /pgf/number format/assume math mode, font=\sf\scriptsize},
    ]
    \addplot[color=red,only marks,tension=0.7,each nth point=7,mark=+,very thin] table [x index=0,y index=1,col sep=space] {csv_files/error_log.csv};
    \addplot[color=blue,tension=0.7,very thin] table [x index=0,y index=1,col sep=space] {csv_files/linear_regression.csv};
    \node[text=red,align=center,fill=white,anchor=west,scale=0.8,inner sep=5pt] at (191,2.9){$\color{white}+$};
    \addplot[color=blue,tension=0.7,very thin] coordinates{(191.5,3.5)(199,2.5)};
    \node[text=black,align=center,fill=white,anchor=west,scale=0.8,inner sep=5pt] at (164,4.3){$\ln(\varepsilon_{error})$};
    \node[text=red,align=center,fill=white,anchor=west,scale=0.8,inner sep=5pt] at (191,4.3){$+$};
    \node[text=black,align=center,fill=white,anchor=west,scale=0.8,inner sep=5pt] at (118,2.9){linear fit with slope $-0.0764$};
    \end{axis}

    \end{tikzpicture}
    \caption{log-linear plot of the last iterate error in function of the number of iterations}
\medskip
\small
After a small number of iterations, the linear fit is almost perfect. This emphasizes that the last iterates converge exponentially fast even for discrete controls when the conditions of application of Theorem \ref{lemma: exponential convergence} are met.
\end{figure}
We insist that the number of iterations may appear large only because we considered a time interval of $[0,10]$ and even then, computations are quite fast. Since the functions $\theta,\eta$ can be computed very accurately in this setting let us also compare our numerical results to the solution of \eqref{eq: explicit mffbsde numerical}. For the same parameters, the functions $\eta,\theta$ are estimated numerically by doing a linear regression of $\left(Y^{i}_j\right)_{j=1:N_p}$ onto $\left(X^{i}_j\right)_{j=1:N_p}$ for fixed $1\in \{1,\cdots N_t\}$.
\begin{figure}[H]
\centering
\begin{minipage}{.5\textwidth}
  \centering
  \label{fig:eta}
  \captionof{figure}{  $\eta(t)$}
\begin{tikzpicture}[
        font=\bfseries\sffamily,
    ]
    \begin{axis}[
        width=7.5cm,
        height=6cm,
        at={(0,0)},
        ymin=0,
        ymax=1,
        xmin=0,
        xmax=10,
        grid=both,
        minor tick num =5,
        minor tick style={draw=none},
        minor grid style={thin,color=black!10},
        major grid style={thin,color=black!10},
        xlabel=Time,
        tick align=outside,
        axis x line*=bottom,
        axis y line*=none,
        xtick={0,2,...,10},
        extra x tick style={xticklabel style={anchor=north east}, major tick length=0pt},
        ytick={0,0.2,...,1},
        xlabel style={color=black},
        ylabel style={align=center,rotate=-90,color=black},
        x tick label style={
            /pgf/number format/assume math mode, font=\sf\scriptsize},
        y tick label style={
        /pgf/number format/assume math mode, font=\sf\scriptsize},
    ]
    \addplot[color=red,only marks,tension=0.7,each nth point=3,mark=+,very thin] table [x index=0,y index=1,col sep=space] {csv_files/observed_eta.csv};
    \addplot[color=blue,tension=0.7,very thin] table [x index=0,y index=1,col sep=space] {csv_files/true_eta.csv};
    \end{axis}

    \end{tikzpicture}
\end{minipage}%
\begin{minipage}{.5\textwidth}
  \centering
  \label{fig:theta}
   \captionof{figure}{$\theta(t)$}
\begin{tikzpicture}[
        font=\bfseries\sffamily,
    ]
    \begin{axis}[
        width=7.5cm,
        height=6cm,
        at={(0,0)},
        ymin=-2.5,
        ymax=0,
        xmin=0,
        xmax=10,
        grid=both,
        minor tick num =5,
        minor tick style={draw=none},
        minor grid style={thin,color=black!10},
        major grid style={thin,color=black!10},
        xlabel=Time,
        tick align=outside,
        axis x line*=bottom,
        axis y line*=none,
        xtick={0,2,...,10},
        extra x tick style={xticklabel style={anchor=north east}, major tick length=0pt},
        ytick={-2.5,-2,...,0},
        xlabel style={color=black},
        ylabel style={align=center,rotate=-90,color=black},
        x tick label style={
            /pgf/number format/assume math mode, font=\sf\scriptsize},
        y tick label style={
        /pgf/number format/assume math mode, font=\sf\scriptsize},
    ]
    \addplot[color=red,only marks,tension=0.7,each nth point=3,mark=+,very thin] table [x index=0,y index=1,col sep=space] {csv_files/observed_theta.csv};
    \addplot[color=blue,tension=0.7,each nth point=10,very thin] table [x index=0,y index=1,col sep=space] {csv_files/true_theta.csv};
    \end{axis}

    \end{tikzpicture}
\end{minipage}
  \medskip
\small
Plot of the functions $t\mapsto \theta(t), t\mapsto \eta(t)$ in blue and of the computed values using our algorithm with the red crosses. Since $\varepsilon_{error}$ is very small ($\sim 10^{-13}$) the difference between the true curve and the computed values does not come from the number of iterations but is rather inherent to the discretization (both in time and on the space of random paths) of the problem.
\end{figure}
\section{Perspectives}
In this article we have presented a reinterpretation of monotone FBSDEs as the solution of a monotone variational inequality in a Hilbert space. This point of view allows for a straightforward adaptation of extra-gradient methods to mean field type FBSDEs. Under sufficiently strong assumptions we present a decoupling algorithm converging exponentially fast to the solution of the problem. This is in particular the case under standard assumptions for displacement monotone MFGs. However, for FBSDEs outside of the MFG theory, the convergence rate is in general of the order of $\frac{1}{\sqrt{n}}$ for $n$ the number of iterations. A challenge for future research is to improve this convergence rate. To that end an interesting lead consists in studying a version of the algorithm with an adaptive step size. For example, although the context is very different it is shown in \cite{lavigne2023generalized} that the convergence rate of fictitious play \cite{fictitious-play-cardaliaguet} can be tremendously improved by adding a linear search for the best step size at each step. Another possibility is to find better parameterizations of the problem. In \cite{exploration-noise-delarue} the authors proved that better convergence rates can be obtained by studying FBSDEs under different probability measures using Girsanov Theorem. Perhaps such an idea can also be applied in our setting.

\section*{acknowledgements}
\begin{center}
The author is thankful to François Delarue for helpful discussions and comments on the manuscript.

\quad

Charles Meynard acknowledge the financial support of the European Research Council (ERC) under the European Union’s Horizon Europe research and innovation program (ELISA project, Grant agreement No. 101054746). Views and opinions expressed are however those of the author only and do not necessarily reflect those of the European Union or the European Research Council Executive Agency. Neither the European Union nor the granting authority can be held responsible for them
\end{center}

\bibliographystyle{plain}
\bibliography{source}
\begin{appendices}
\section{Wellposedness of a monotone FBSDE}
\begin{lemma}
    \label{lemma: wellposedness fbsde strong monotonicity on F}
Let $(F,G,g):\reels^{2d}\times \mathcal{P}_2(\reels^{2d})\to \reels^d$ be Lipschitz functions and consider the following FBSDE
\begin{equation}
    \label{eq: lemma wellposedness of fbsde}
\left\{
    \begin{array}{c}
      dX_t=-\tilde{F}(X_t,U_t)dt+\sigma dB_t, X_0=X,\\
      dU_t=\tilde{G}(X_t,U_t)dt-Z_tdB_t, U_T=g(X_T).
    \end{array}
\right.
\end{equation}
If the following monotonicity condition holds
 \begin{gather*}
\exists c_F>0, \quad \forall (X,Y,U,V)\in \mathcal{H}^{4d}, \quad \\
\langle  \tilde{g}(X)-\tilde{g}(Y),X-Y\rangle \geq 0,\\
\langle \left(\begin{array}{c}
F\\
G
\end{array}\right)
(X,Y)-\left(\begin{array}{c}
\tilde{F}\\
\tilde{G}
\end{array}\right)(Y,V),\left(\begin{array}{c}
     X-Y  \\
     U-V
\end{array}\right)\rangle \geq c_F\|\tilde{F}(X,U)-\tilde{F}(Y,V)\|^2,
\end{gather*}
then for any $T>0$, and for any initial condition $X\in \mathcal{H}$ independent of $(B_t)_{t\geq 0}$ there exists a unique solution to the FBSDE \eqref{eq: lemma wellposedness of fbsde}.
\end{lemma}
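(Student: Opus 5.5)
The strategy is the standard Hilbertian continuation method in the spirit of Peng--Wu, adapted to the monotonicity condition of Lemma \ref{lemma: wellposedness fbsde strong monotonicity on F}, where the strong monotonicity controls $\tilde F$ rather than $U$.

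\emph{Uniqueness.} Take two solutions $(X^i,U^i,Z^i)$, $i=1,2$, with the same initial condition. Apply Itô's lemma to $V_t=(U^1_t-U^2_t)\cdot(X^1_t-X^2_t)$ exactly as in the proof of Theorem \ref{thm: existence fbsde}. Using $V_0=0$ and the monotonicity of $\tilde g$ gives
\[c_F\int_0^T\|\tilde F(X^1_t,U^1_t)-\tilde F(X^2_t,U^2_t)\|^2dt\le 0 .\]
Both forward equations share the same noise, so their drifts coincide a.e. and $X^1=X^2$. The two backward equations then have identical terminal data. Their drivers differ only through $U$, and they are Lipschitz, so the standard BSDE stability estimate (Gronwall) yields $U^1=U^2$ and $Z^1=Z^2$.

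\emph{Existence by continuation.} For $\lambda\in[0,1]$, consider the family
\[dX_t=-(\lambda\tilde F(X_t,U_t)+(1-\lambda)U_t+\phi_t)dt+\sigma dB_t,\qquad dU_t=(\lambda\tilde G(X_t,U_t)+\psi_t)dt-Z_tdB_t,\]
\[U_T=\lambda\tilde g(X_T)+(1-\lambda)X_T+\xi,\]
with arbitrary perturbations $(\phi,\psi)\in(\mathcal H^T)^{2d}$ and $\xi\in L^2(\mathcal F_T)$.

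\begin{itemize}
\item At $\lambda=0$ the system is linear, with forward drift $-U$ and terminal condition $X_T$. It is classically solvable on any horizon; for instance, the decoupling field solves a scalar Riccati equation $\dot\eta=\eta^2$, $\eta(T)=1$, which has the global solution $\eta(t)=1/(1+T-t)$.
\item The key a priori estimate is as follows. Take two solutions with parameter $\lambda$ and perturbation data $(\phi,\psi,\xi)$, $(\phi',\psi',\xi')$, and apply Itô to $\Delta U\cdot\Delta X$. The monotonicity condition gives the lower bound
\[\lambda c_F\|\Delta\tilde F\|_T^2+(1-\lambda)\|\Delta U\|_T^2+\lambda\cdot 0+(1-\lambda)\|\Delta X_T\|^2 .\]
This is bounded above by cross terms involving the perturbation differences.
\item The controlled quantity is the effective forward drift difference $\lambda\Delta\tilde F+(1-\lambda)\Delta U$. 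Its squared norm is bounded by $\max(1/c_F,1)$ times the monotone quantity $\lambda\langle\Delta\tilde F,\Delta U\rangle+(1-\lambda)\|\Delta U\|^2$. This follows by convexity, using $\langle\Delta\tilde F,\Delta U\rangle+\langle\Delta\tilde G,\Delta X\rangle\ge c_F\|\Delta\tilde F\|^2$.
\item Controlling this drift controls $\Delta X$ in sup norm via the forward equation. The backward equation, which is Lipschitz in $(X,U)$, then controls $\Delta U$ and $\Delta Z$ by BSDE stability and Gronwall. The result is a bound of the form
\[\|(\Delta X,\Delta U,\Delta Z)\|\le K\,\|(\Delta\phi,\Delta\psi,\Delta\xi)\|,\]
with $K$ independent of $\lambda$.
\end{itemize}

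\emph{Continuation step.} With this uniform constant, the standard Peng--Wu step applies. Given solvability at $\lambda_0$ for all perturbations, solvability at $\lambda_0+\delta$ is obtained by a Picard contraction. The extra terms $\delta(\tilde F-U)$, $\delta\tilde G$ and $\delta(\tilde g-X)$ are treated as perturbations, and the map is a contraction once $\delta K\cdot\mathrm{Lip}<1$. Since $\delta$ does not depend on $\lambda_0$, finitely many steps reach $\lambda=1$ with $\phi=\psi=\xi=0$.

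\emph{Main obstacle.} The delicate step is the uniform a priori estimate. The monotonicity gives no direct control of $\Delta U$ when $\lambda=1$, only of $\Delta\tilde F$. One must therefore route the estimate through the forward drift as described above, rather than through $\Delta U$ as in Hypothesis \ref{hyp: L2 monotonicity}. One must also check that the cross terms from the perturbations can be absorbed by Young's inequality using only this drift control together with the Lipschitz bounds. I would first verify this absorption explicitly for the $\Delta\phi\cdot\Delta U$ term. That term is the problematic one, since $\Delta U$ is estimated only after Gronwall; Gronwall bounds it by the drift and perturbation norms, which closes the loop with a small $\varepsilon$.
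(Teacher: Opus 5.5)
Your proposal is correct, and it reaches existence by a different route than the paper. Uniqueness is essentially the same in both: It\^o's formula on $\Delta U\cdot\Delta X$ gives $\Delta\tilde F=0$, and then the forward equation and BSDE stability finish the argument.

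\textbf{The paper's route.} It proves a single a priori estimate, comparing solutions with \emph{different initial conditions} and no perturbation, which gives $\|U_0-V_0\|\le C\|X-Y\|$ on any horizon. It then delegates existence to the decoupling-field machinery of the cited works. That $L^2$-Lipschitz bound is turned into a Lipschitz decoupling field on $[0,T]\times\reels^d\times\mathcal{P}_2(\reels^d)$. Short-time solvability, on a horizon that depends only on that Lipschitz constant, is then glued backward in time.

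\textbf{Your route.} You run a Peng--Wu continuation with a \emph{fixed} initial condition and varying perturbations $(\phi,\psi,\xi)$. The one genuinely new ingredient is your convexity observation. The interpolated drift $\lambda\tilde F+(1-\lambda)U$ inherits drift-strong monotonicity with constant $\min(c_F,1)$, uniformly in $\lambda$. This is exactly what makes the a priori estimate, and hence the continuation step $\delta$, independent of $\lambda$. The absorption of $\langle\Delta\phi,\Delta U\rangle_T$, $\langle\Delta\psi,\Delta X\rangle_T$ and $\langle\Delta\xi,\Delta X_T\rangle$ does close as you describe. The sup bound on $\Delta X$ in terms of the drift gives this, together with BSDE stability for $\Delta U$ and Young's inequality.

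\textbf{What each buys.} Your argument is self-contained and works directly on $L^2$ processes with law-dependent coefficients. It needs neither short-time theory nor the passage from Hilbertian Lipschitz bounds to functions of $(x,\mu)$, and it yields stability under perturbations for free. The paper's route, in exchange, produces the Lipschitz decoupling field, which the paper uses elsewhere (e.g.\ in the particle-approximation theorem).

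\textbf{Two bookkeeping points.} First, the monotone quantity that controls $\|\lambda\Delta\tilde F+(1-\lambda)\Delta U\|_T^2$ must include $\lambda\langle\Delta\tilde G,\Delta X\rangle_T$, as your use of the full inequality indicates. Second, the backward equation must be read as $U_t=U_T+\int_t^T\tilde G\,ds-\int_t^T Z_s\,dB_s$, as in the body of the paper. Otherwise the It\^o computation does not produce the monotone combination with the right sign.
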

\begin{proof}
Let us present an a priori estimate for the forward backward system \eqref{eq: lemma wellposedness of fbsde}. We fix two inital conditions $X,Y\in \mathcal{H}^d$ an assume that there exist strong  solutions to \eqref{eq: lemma wellposedness of fbsde} $(X_t,U_t)_{t\in [0,T]}$ (resp. $(Y_t,V_t)_{t\in [0,T]}$) with initial condition $X$ (resp. $Y$). We now introduce the following process
\[\forall t\in [0,T], \quad I_t=(U_t-V_t)\cdot (X_t-Y_t).\]
By Ito's lemma, 
\[\forall t\in [0,T], \quad \esp{I_T}=\esp{I_t}-\int_t^T \langle \left(\begin{array}{c}
\tilde{F}\\
\tilde{G}
\end{array}\right)
(X_s,U_s)-\left(\begin{array}{c}
\tilde{F}\\
\tilde{G}
\end{array}\right)(Y_s,V_s),\left(\begin{array}{c}
     X_s-Y_s  \\
     U_s-V_s
\end{array}\right)\rangle ds.\]
Using the monotonicity of coefficients, we get that 
\[\forall t\in [0,T],\quad 0\leq \esp{I_T}\leq \esp{I_t},\]
and 
\[c_F\esp{\int_0^T|\tilde{F}(X_s,U_s)-\tilde{F}(Y_s,V_s)|^2 }ds\leq \esp{I_0}.\]
By definition of $(X_s,Y_s)_{s\in [0,T]}$, it follows naturally that 
\[\forall t\in [0,T],\quad \|X_t-Y_t\|\leq \|X-Y\|+\frac{t}{c_F}\sqrt{\esp{I_0}}.\]
Using this estimate and by a backward application of Gronwall lemma, there exists a constant $C$ depending only on $\|(G,g)\|_{Lip},c_F$  such that 
\[\forall t\in [0,T],\quad \|U_t-V_t\|^2\leq Ce^{CT}\left(\|X-Y\|^2+\esp{I_0}\right).\]
Since by Cauchy schwartz inequality
\[\forall \lambda>0, \quad \esp{I_0}\leq \frac{\lambda}{2}\|X-Y\|^2+\frac{1}{2\lambda}\|U_0-V_0\|^2,\]
choosing $\lambda= Ce^{CT}$ and evaluating for $t=0$ we get 
\[\|U_0-V_0\|^2\leq (2Ce^{CT}+C^2e^{2CT})\|X-Y\|^2.\]
This shows that there can be at most one strong solution for a given initial condition. Since this a priori estimate is valid for any initial condition and on any time interval, it follows naturally from ideas introduced in \cite{lipschitz-sol,monotone-sol-meynard} that the FBSDE \eqref{eq: lemma wellposedness of fbsde} admits a Lipschitz decoupling field $U:[0,T]\times \reels^d\times \mathcal{P}_2(\reels^d)\to \reels^d$. In particular, this allows to conclude that there indeed exists a unique strong solution to \eqref{eq: lemma wellposedness of fbsde} for any admissible initial condition $X\in \mathcal{H}^d$. 
\end{proof}
\end{appendices}
\end{document}